\renewcommand*\env@matrix[1][*\c@MaxMatrixCols c]{%
  \hskip -\arraycolsep
  \let\@ifnextchar\new@ifnextchar
  \array{#1}}
\newcommand{\Z}{{\mathbb Z}}
\newcommand{\Q}{{\mathbb Q}}
\newcommand{\C}{{\mathbb C}}
\newcommand{\T}{{\mathcal T}}
\newcommand{\rank}{{\mathrm{rank}}}
\newcommand{\Sel}{{\mathrm{Sel}}}
\newcommand{\dimF}{{\mathrm{dim}_{\mathbb{F}_2}}}
\newcommand{\Ftwo}{{\mathbb{F}_2}}
\newcommand{\F}{{\mathbb{F}}}
\newcommand{\res}{{\mathrm{res}}}
\newcommand{\Zt}{{\mathbb{Z}/2\mathbb{Z}}}
\newcommand{\hatphi}{{ \hat \phi }}
\newcommand{\phihat}{{ \hat \phi }}
\newcommand{\ord}{{ \mathrm{ord} }}
\newcommand{\Max}{{ \mathrm{Max} }}
\newcommand{\legendre}[2]{\left( \frac{#1}{#2} \right)}
\newcommand{\legendrea}[2]{\left( \frac{#1}{#2} \right)_a}
\newcommand{\FF}{\mathbb{F}}
\newcommand{\RR}{\mathbb{R}}
\newcommand{\QQ}{\mathbb{Q}}
\newtheorem{thm}{\bf{Theorem}}
\newtheorem{theorem}{\bf{Theorem}}[section]
\newtheorem{cor}[thm]{\bf{Corollary}}
\newtheorem{corollary}[theorem]{\bf{Corollary}}
\newtheorem{prop}[theorem]{\bf{Proposition}}
\newtheorem{proposition}[theorem]{\bf{Proposition}}
\newtheorem{defn}[thm]{Definition}
\newtheorem{lem}[theorem]{\bf{Lemma}}
\newtheorem{lemma}[theorem]{\bf{Lemma}}
\newtheorem{claim}{\bf{Claim}}
\newcommand\coolover[2]{\mathrlap{\smash{%
\overbrace{\phantom{\begin{matrix} #2 %
\end{matrix}}}^{\mbox{$#1$}}}}#2}
\newcommand\coolleftbrace[2]{%
#1\left\{\vphantom{\begin{matrix}%
#2 \end{matrix}}\right.}
\theoremstyle{definition}
\newtheorem{definition}[theorem]{\bf{Definition}}
\newtheorem{remark}[theorem]{\bf{Remark}}
\DeclareSymbolFont{cyrletters}{OT2}{wncyr}{m}{n}
\DeclareMathSymbol{\Sha}{\mathalpha}{cyrletters}{"58}
\title{On the Joint Distribution Of $\Sel_\phi(E/\Q)$ and $\Sel_\hatphi(E^\prime/\Q)$ in Quadratic Twist Families}
\author{Daniel Kane} \author{Zev Klagsbrun}
\begin{document}
\bibliographystyle{alpha}

\begin{abstract}
If $E$ is an elliptic curve with a point of order two, then work of Klagsbrun and Lemke Oliver shows that the distribution of $\dimF \Sel_\phi(E^d/\Q) - \dimF \Sel_\hatphi(E^{\prime d}/\Q)$ within the quadratic twist family tends to the discrete normal distribution $\mathcal{N}(0,\frac{1}{2} \log \log X)$ as $X \rightarrow \infty$.

We consider the distribution of $\dimF \Sel_\phi(E^d/\Q)$ within such a quadratic twist family when $\dimF \Sel_\phi(E^d/\Q) - \dimF \Sel_\hatphi(E^{\prime d}/\Q)$ has a fixed value $u$. Specifically, we show that for every $r$, the limiting probability that $\dimF \Sel_\phi(E^d/\Q) = r$ is given by an explicit constant $\alpha_{r,u}$. The constants $\alpha_{r,u}$ are closely related to the $u$-probabilities introduced in Cohen and Lenstra's work on the distribution of class groups, and thus provide a connection between the distribution of Selmer groups of elliptic curves and random abelian groups.

Our analysis of this problem has two steps. The first step uses algebraic and combinatorial methods to directly relate the ranks of the Selmer groups in question to the dimensions of the kernels of random $\mathbb{F}_2$-matrices. This proves that the density of twists with a given $\phi$-Selmer rank $r$ is given by $\alpha_{r,u}$ for an unusual notion of density. The second step of the analysis utilizes techniques from analytic number theory to show that this result implies the correct asymptotics in terms of the natural notion of density.

\end{abstract}
\maketitle

\vspace{-0.2in}

\section{Introduction}
Recently, there has a lot of interest in the arithmetic statistics related to the quadratic twist family of a given elliptic curve $E/\Q$. Much progress has been made towards understanding how 2-Selmer ranks are distributed in these families when either $E(\Q)[2] \simeq \Zt \times \Zt$ or $E[2]$ has an $\mathcal{S}_3$ Galois action. In both of these cases, there are explicit constants $\alpha_r$ summing to one such that the proportion of twists with 2-Selmer rank $r$ is given by $\alpha_r$ \cite{Kane}, \cite{KMR2}.





Strikingly, this is not true when $E$ has a single rational point of order two. In this case $E$ has a degree two isogeny $\phi:E \rightarrow E^\prime$ and an associated Selmer group $\Sel_\phi(E/\Q)$. Work of Xiong shows that if $E$ does not have a cyclic 4-isogeny defined over $\Q(E[2])$, then the distribution of the ranks of $\Sel_\phi(E^d/\Q)$ as $d$ varies among the squarefree integers less than $X$ tends to the distribution $\Max \left( 0 , \mathcal{N}(0, \frac{1}{2} \log\log X) \right )$ as $X \rightarrow \infty$, where $\mathcal{N}(\mu, \sigma^2)$ is the discrete normal distribution with mean $\mu$ and variance $\sigma^2$ \cite{X}. In this case, $\Sel_\phi(E/\Q)$ maps $2 - to - 1$ into $\Sel_2(E/\Q)$, showing that for any fixed $r$, at least half of the quadratic twists of $E$ have 2-Selmer rank greater than $r$.

This same result can be deduced by studying how $\dimF \Sel_\phi(E/\Q) - \dimF \Sel_\hatphi(E^{\prime }/\Q)$ varies under quadratic twist, where $\Sel_\hatphi(E^\prime/\Q)$ is the Selmer group associated to the dual isogeny $\hatphi$ of $\phi$. In \cite{KLO}, Lemke Oliver and the second author shows that as $d$ varies among the squarefree integers less than $X$, the distribution of $\dimF \Sel_\phi(E^d/\Q) - \dimF \Sel_\hatphi(E^{\prime d}/\Q)$ tends to $\mathcal{N}(0,\frac{1}{2} \log \log X)$ as $X \rightarrow \infty$.





This article studies the joint distribution of $\Sel_\phi(E^d/K)$ and $\Sel_\hatphi(E^{\prime d}/K)$ conditional on a fixed value of $\dimF \Sel_\phi(E^d/K) - \dimF \Sel_\hatphi(E^{\prime d}/K)$. In particular, we prove the following:

\begin{thm}\label{MainThm}
Suppose $E/\Q$ is an elliptic curve with $E(\Q)[2] \simeq \Zt$ that does not have a cyclic 4-isogeny defined over $\Q(E[2])$ and $u \in \Z$. Define $$S(X, u) =  \{ d \text{ squarefree }, |d| \le X,  \dimF \Sel_\phi(E^d/\Q) - \dimF \Sel_\hatphi(E^{\prime d}/\Q) = u \}.$$ Then for any $r \ge \Max (1, u+1)$, $$\lim_{X \rightarrow \infty} \frac{|\{ d \in S(X,u) : (\dimF \Sel_\phi(E^d/\Q), \dimF  \Sel_\hatphi(E^{\prime d}/\Q) ) = (r, r-u) \}|}{|S(X,u)|} = \alpha_{r,u},$$ where $$\alpha_{r,u} = \frac{2^{-(r-1)(r-u-1)}\prod_{s = 1}^\infty(1 - 2^{-s})}{\prod_{s = 1}^{r-1}(1-2^{-s}) \prod_{s = 1}^{r-u-1}(1-2^{-s}) }.$$
\end{thm}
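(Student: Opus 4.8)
The plan is to follow the two-step strategy announced in the abstract: an algebraic–combinatorial step that computes the relevant density with respect to an auxiliary ordering of the twists, and then an analytic step that upgrades this to the natural ordering by $|d|$. For the setup I would first recall, as in \cite{KLO}, the description of $\Sel_\phi(E^d/\Q)$ and $\Sel_\hatphi(E^{\prime d}/\Q)$ by local conditions: writing $d = \pm p_1\cdots p_n$ with the $p_i$ coprime to a fixed bad set, one produces $\Ftwo$-matrices $M_d$ together with companion matrices playing the role of transposes, whose entries are built from the additive Legendre symbols $\left(\frac{p_i}{p_j}\right)$ plus bounded corrections coming from the bad primes and from the $2$-torsion data $E(\Q)[2]$, $E'(\Q)[\hatphi]$. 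The key algebraic input is an identity realizing $\dimF \Sel_\phi(E^d/\Q)$ and $\dimF \Sel_\hatphi(E^{\prime d}/\Q)$ as coranks of $M_d$ and of its companion, arranged so that the difference $\dimF \Sel_\phi(E^d/\Q) - \dimF \Sel_\hatphi(E^{\prime d}/\Q)$ is an essentially linear functional of the local data. Consequently, conditioning on this difference equalling $u$ is a linear constraint on the matrix ensemble, and on the constrained ensemble the event $\dimF \Sel_\phi(E^d/\Q) = r$ becomes a corank condition.

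For Step 1, I would show that if one fixes $n = \omega(d)$ and averages over admissible prime configurations with uniform weight — this is the ``unusual notion of density'' of the abstract — then $M_d$ becomes equidistributed among all $\Ftwo$-matrices compatible with its forced structure: the relations among entries imposed by quadratic reciprocity and by the local conditions, together with the constraint coming from conditioning on $u$. Granting this equidistribution, the problem reduces to a finite-field linear-algebra computation: for a random matrix of this symmetry type whose size grows with $n$, conditioned on the $u$-constraint, evaluate $\lim_{n\to\infty}$ of the probability that the relevant corank makes $\dimF \Sel_\phi(E^d/\Q) = r$. A generating-function count over $\Ftwo$-matrices with the appropriate symmetry — parallel to the corank computations of \cite{Kane} and \cite{KMR2} and to the original work of Cohen and Lenstra — identifies this limit with $\alpha_{r,u}$, thereby exhibiting $\alpha_{r,u}$ as a Cohen–Lenstra $u$-probability and proving the theorem for the auxiliary ordering. (As a sanity check, summing $\alpha_{r,u}$ over $u$ should recover Xiong's $\Max\!\left(0,\mathcal N(0,\tfrac12\log\log X)\right)$ from \cite{X}.)

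For Step 2, to replace the auxiliary ordering by $|d|\le X$ I would use the analytic machinery of \cite{Kane} and \cite{KLO}. Since a typical squarefree $d\le X$ has $\omega(d)\sim \log\log X$ prime factors, one stratifies by $\omega(d)$; the difficulty is that for a genuinely random $d\le X$ the Legendre symbols among its prime factors are neither independent nor uniform, so the resulting character sums must be controlled. The indicator of the event $\{\dimF \Sel_\phi(E^d/\Q) = r,\ \dimF \Sel_\phi(E^d/\Q) - \dimF \Sel_\hatphi(E^{\prime d}/\Q) = u\}$ is expanded into a short combination of products of quadratic residue symbols, and the corresponding sums over squarefree $d\le X$ are estimated by large-sieve and character-sum bounds of Heath-Brown and Fouvry–Klüners type: the main term reproduces the Step 1 computation, and the off-diagonal contributions are shown to be of lower order. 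Dividing by the analogous estimate for $|S(X,u)|$ yields $\alpha_{r,u}$.

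I expect Step 2 to be the crux. Because we condition on the difference taking a single fixed value $u$, that event has probability $\asymp (\log\log X)^{-1/2}$, so both the numerator and $|S(X,u)|$ are small relative to the full count of squarefree $d\le X$; computing their ratio requires matching asymptotics with error terms that remain negligible \emph{after} dividing by $(\log\log X)^{-1/2}$. This forces the character-sum estimates to save strictly more than the trivial bound and to be uniform over the entire family of characters that occurs, and it requires carefully tracking how the conditioning interacts with the stratification by $\omega(d)$. A secondary, more structural difficulty lies in Step 1: one must pin down exactly which relations among the matrix entries are forced — by quadratic reciprocity, by the rational $2$-torsion, and by the bad primes — and which are free, precisely enough that the finite-field count produces $\alpha_{r,u}$ and not a nearby but distinct distribution.
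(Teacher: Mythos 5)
Your Step 1 is a reasonable sketch of what the paper does in Section 5: construct the matrix $\widehat{\mathcal{M}}$ from the local conditions, argue that under the auxiliary measure $\mathcal{P}$ (uniform congruence classes modulo $8\Delta$ plus independent Legendre symbols subject to reciprocity) its nullity is distributed like a random $\Ftwo$-matrix of the right dimensions, and read off $\alpha_{r,u}$. You omit the actual mechanism (the paper shows the upper block $\begin{bmatrix} A_{1,1} & A_{1,2} & A_{1,3} \\ A_{2,1} & A_{2,2} & A_{2,3}\end{bmatrix}$ has full rank with probability $1-O(2^{-n_2})$, then column- and row-reduces $\widehat{\mathcal{M}}$ to $\begin{bmatrix} I & 0 \\ 0 & A_3^{\prime\prime}\end{bmatrix}$ with $A_3^{\prime\prime}$ genuinely random of shape $n_3 \times (n_3 - u)$ and invokes the Kovalenko--Levitskaya / Fulman--Goldstein corank theorem), but your outline is compatible with it.

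The genuine gap is in Step 2. You propose to expand ``the indicator of the event $\{\dimF\Sel_\phi(E^d/\Q)=r,\ \dimF\Sel_\phi-\dimF\Sel_\hatphi=u\}$'' directly into a short combination of products of quadratic residue symbols and estimate the resulting sums over squarefree $d\le X$. That expansion does not exist: the corank of an $\Ftwo$-matrix is a highly nonlinear function of its entries and cannot be written as a bounded-length signed sum of products of Legendre symbols. The quantity that \emph{does} admit such an expansion is $|\Sel_\phi(E^d)|^k = (\#\{x\in U^d: x\in W^d\})^k$, because the local membership conditions $x\in W_p^d$ unfold into $\pm 1$-valued character averages; this yields the paper's identity (\ref{selmerSizeSumEqn}) and makes the Kane-style character-sum machinery applicable. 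The paper therefore does not attack the rank indicator at all: it proves Proposition \ref{momentProp}, computing $\lim_{X\to\infty}\frac{1}{|S'(X,u)|}\sum_{d\in S'(X,u)}|\Sel_\phi(E^d)|^k = \sum_r 2^{kr}\alpha_{r,u}$ for every $k\ge 0$, and then recovers the individual $\alpha_{r,u}$ by a moments-determine-the-distribution argument (the entire-function comparison $F(z)=G(z)$ via Jensen's formula). That uniqueness step is essential and is entirely absent from your proposal. Your observation that conditioning on $u$ costs a factor of $(\log\log X)^{-1/2}$ is correct and is exactly why the error terms in the moment estimates have to be tracked carefully against $|S'(X,u)|$ rather than against $X$ (Lemmas \ref{mPrimeZeroLem} and \ref{fixedNLem}), but the moment method, not a direct indicator expansion, is what makes that bookkeeping feasible.
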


Theorem \ref{MainThm} is similar to the results of Thorne and the first author regarding the distribution of $\phi$-Selmer groups in the $j = 1728$ family of elliptic curves \cite{KT}.

\subsection{Connections With the Cohen-Lenstra Heuristics}

In 1984, Cohen and Lenstra conjectured that if $K$ is an imaginary quadratic field, then the probability that $Cl(K)[p^\infty]$ is isomorphic to a fixed finite abelian $p$-group $G$ should be proportional to $\frac{1}{|Aut(G)|}$. This conjecture infers a distribution on the $p$-rank of $Cl(K)$ and Washington observed that this distribution is identical to one appearing in random matrix theory \cite{WashCL}. Assuming the Cohen-Lenstra heuristic, the probability that $Cl(K)[p^\infty]$ has $p$-rank $r$ is the same as the probability that a random $n \times n$ matrix over $\FF_2$ has nullity $r$ as $n \rightarrow \infty$ \cite{FulmanGoldstein}.

In their original paper, Cohen and Lenstra also defined a notion of the $u$-probability of a group $G$. Let $H$ be a random $p$-group $H$ chosen with probability proportional to $\frac{1}{|Aut(H)|}$ and $h_1,h_2,\ldots,h_u$ be elements of $H$ chosen uniformly at random. The \textit{$u$-probability of $G$} is the probability that $H/\langle h_1,h_2, \ldots,h_u\rangle \simeq G$. There is a similar notion for $p$-ranks and Cohen and Lenstra obtain the following result.
\begin{theorem}[Theorem 6.3 in \cite{CL}]\label{thm:CL6.3}
Define the \textit{$u$-probability that $H$ has rank $r$} as the probability that $\rank_p(H/\langle h_1,h_2,\ldots,h_u\rangle) = r$. The $u$-probability that a $p$-group $H$ has rank $r$ is given by \begin{equation}\label{eq:alphaprime}\alpha^\prime_{p,u,r} = \frac{p^{-r(r+u)} \prod_{s = 1}^\infty (1 - p^{-s})}{\prod_{s = 1}^r (1 - p^{-s})\prod_{s = 1}^{r+u} (1 - p^{-s})}\end{equation}
\end{theorem}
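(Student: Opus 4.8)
\smallskip
\noindent\emph{Proof plan.} The plan is to convert this purely group-theoretic statement into a question about cokernels of random matrices over $\Z_p$, reduce that modulo $p$ to an exact finite-field counting problem, and then evaluate the resulting limit of Gaussian-binomial expressions by hand. For \textbf{Step 1} I would invoke the Friedman--Washington theorem: if $A_n$ is a Haar-random matrix in $M_{n\times n}(\Z_p)$, then $\mathrm{coker}(A_n)$ converges in distribution as $n\to\infty$ to the Cohen--Lenstra measure $\mu_0$, under which a finite abelian $p$-group $G$ has probability $\frac{1}{|\mathrm{Aut}(G)|}\prod_{s\ge 1}(1-p^{-s})$. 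The key elementary point is that the quotient map $\Z_p^n \twoheadrightarrow \mathrm{coker}(A_n)$ pushes Haar measure forward to the uniform measure on this finite group, so choosing $u$ independent uniform elements $h_1,\dots,h_u$ of $\mathrm{coker}(A_n)$ is the same as adjoining $u$ independent Haar-random columns $v_1,\dots,v_u\in\Z_p^n$ to $A_n$:
\[
\mathrm{coker}(A_n)\big/\langle h_1,\dots,h_u\rangle \;\cong\; \mathrm{coker}\bigl([\,A_n\mid v_1\mid\cdots\mid v_u\,]\bigr)\;=\;\mathrm{coker}(B_n),
\]
with $B_n$ Haar-random in $M_{n\times(n+u)}(\Z_p)$. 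Since ``quotient by $u$ uniform elements'' is a Markov kernel $K^{(u)}$ on isomorphism classes of finite abelian $p$-groups, and the laws of $\mathrm{coker}(A_n)$ form a tight family converging to $\mu_0$, I would push this operation through the limit to conclude that the $u$-quotient of a $\mu_0$-random group has the law of $\lim_{n\to\infty}\mathrm{coker}(B_n)$; in particular the $u$-probability that $H$ has rank $r$ equals $\lim_{n\to\infty}\Pr\bigl[\rank_p\mathrm{coker}(B_n)=r\bigr]$.

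\smallskip
\noindent For \textbf{Step 2}, note that for any $B\in M_{n\times(n+u)}(\Z_p)$ one has $\mathrm{coker}(B)/p\,\mathrm{coker}(B)=\mathrm{coker}(\bar B)$ with $\bar B=B\bmod p$, so by Nakayama $\rank_p\mathrm{coker}(B)=\dim_{\F_p}\mathrm{coker}(\bar B)=n-\rank_{\F_p}(\bar B)$, and $\bar B_n$ is uniform in $M_{n\times(n+u)}(\F_p)$. Hence I must compute $\lim_{n\to\infty}\Pr[\rank_{\F_p}(\bar B_n)=n-r]$. The number of rank-$\rho$ matrices in $M_{n\times(n+u)}(\F_p)$ equals $\binom{n}{\rho}_p\prod_{i=0}^{\rho-1}(p^{\,n+u}-p^i)=\prod_{i=0}^{\rho-1}\frac{(p^n-p^i)(p^{\,n+u}-p^i)}{p^\rho-p^i}$ (choose a $\rho$-dimensional column space, then a surjection onto it). Putting $\rho=n-r$, dividing by $p^{\,n(n+u)}$, writing each $p^{\ell}-p^{i}=p^{i}(p^{\,\ell-i}-1)$ and collecting the powers of $p$, the net exponent of $p$ equals exactly $-r(r+u)$, while the remaining product of factors $1-p^{-\ell}$ converges as $n\to\infty$ to $\dfrac{\prod_{\ell\ge 1}(1-p^{-\ell})}{\prod_{\ell=1}^{r}(1-p^{-\ell})\,\prod_{\ell=1}^{r+u}(1-p^{-\ell})}$. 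Multiplying the two contributions gives precisely $\alpha'_{p,u,r}$.

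\smallskip
\noindent Step 2 is an exact computation together with a routine evaluation of a convergent infinite product, so the only genuinely delicate point is the interchange of the $u$-quotient operation with the $n\to\infty$ limit in Step 1; I would handle this with a short tightness argument --- pointwise convergence of $\mathrm{law}(\mathrm{coker}(A_n))$ to the probability measure $\mu_0$ forces convergence in $\ell^1$ by Scheff\'e's lemma, and $\ell^1$-convergence of probability vectors is preserved under the bounded Markov kernel $K^{(u)}$. As an alternative that bypasses Step 1, one can work directly with the Cohen--Lenstra weights, combining the identity $\sum_G\bigl(|\mathrm{Aut}(G)|\,|G|^{u}\bigr)^{-1}=\prod_{s>u}(1-p^{-s})^{-1}$ with its refinement according to $p$-rank; the same bookkeeping then reproduces $\alpha'_{p,u,r}$.
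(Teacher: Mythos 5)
Your proof is correct, and it is genuinely different from what the paper does: the paper offers no proof of this statement at all, simply importing it as Theorem 6.3 of Cohen--Lenstra, whose original argument runs through their ``fundamental identities'' for the weights $w_u(G)=\bigl(|\mathrm{Aut}(G)|\,|G|^u\bigr)^{-1}$ --- essentially the alternative you sketch in your last sentence. Your main route instead passes through Friedman--Washington: you correctly identify that quotienting $\mathrm{coker}(A_n)$ by $u$ uniform elements is the same as adjoining $u$ Haar-random columns, reduce mod $p$ via Nakayama to the rank distribution of a uniform $n\times(n+u)$ matrix over $\F_p$, and your Gaussian-binomial bookkeeping checks out (the net power of $p$ is indeed $-r(r+u)$ and the surviving factors converge to the stated ratio of products). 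This buys something the citation does not: your Step 2 simultaneously proves the paper's Theorem \ref{thm:randmatthm} (the left nullspace of an $n\times(n+u)$ matrix over $\F_p$ has dimension $n-\rank$, so your computation is exactly the limiting nullity distribution), making transparent why the same constants govern both the $u$-probabilities and the random-matrix model --- a coincidence the paper only observes. The costs are that you import Friedman--Washington as a black box (itself a nontrivial theorem, and arguably heavier machinery than Cohen--Lenstra's direct generating-function identity), and that the interchange of the $n\to\infty$ limit with the quotient operation genuinely needs the Scheff\'e/Markov-kernel argument you flag; you handle that point correctly, so there is no gap.
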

While the notion of $u$-probability is only sensible for $u \ge 0$, we may nonetheless extend the definition to include $u \le 0$ by defining it to be $\alpha^\prime_{p,u,r}$ as in (\ref{eq:alphaprime}) if $r \ge u$ and zero otherwise. As can be seen, the contants $\alpha_{r,u}$ in Theorem \ref{MainThm} are given by $\alpha_{r,u} = \alpha^\prime_{2,r+1,-u}$. That is, if $u =  \dimF \Sel_\phi(E^d/\Q) - \dimF \Sel_\hatphi(E^{\prime d}/\Q)$, then for any $r \ge \Max(1,u+1)$,  the probability that $ \dimF \Sel_\phi(E^d/\Q) = r$ is equal to the $-u$ probability that a random $2$-group $H$ has rank $r+1$. Other than the related results in \cite{KT}, this is the only instance in which these $\alpha^\prime_{p,u,r}$ have been provably shown to arise in the context of arithmetic statistics.

\subsection{Methods and Organization}

The constants $\alpha^\prime_{p,u,r}$ in Theorem \ref{thm:CL6.3} appear in the following well-known theorem from random matrix theory.
\begin{theorem}\label{thm:randmatthm}
Let $M$ be a randomly chosen $n \times n + u$ matrix over $\FF_p$. Then the probabilty that the left nullspace of $M$ has dimension $r$ tends to $\alpha^\prime_{p,r,u}$ exponentially quickly as $n \rightarrow \infty$.
\end{theorem}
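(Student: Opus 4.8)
The plan is to reduce the assertion to the classical enumeration of matrices of a given rank and then carry out the limit. Let $M$ be uniformly distributed in the set of $n\times(n+u)$ matrices over $\FF_p$. Its left nullspace $\{v\in\FF_p^{n} : v^{\mathrm T}M=0\}$ has dimension $n-\rank M$, so the event ``the left nullspace has dimension $r$'' is exactly the event $\rank M = n-r$. Hence it suffices to compute, for fixed $m$ and $\rho$, the probability that a uniform $n\times m$ matrix over $\FF_p$ has rank $\rho$, and then to specialize to $m=n+u$ and $\rho=n-r$.

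For the first step I would invoke the standard count: the number of $n\times m$ matrices over $\FF_p$ of rank $\rho$ equals
\[
\binom{m}{\rho}_{\!p}\prod_{i=0}^{\rho-1}\bigl(p^{n}-p^{i}\bigr)
=\frac{\prod_{i=0}^{\rho-1}(p^{n}-p^{i})(p^{m}-p^{i})}{\prod_{i=0}^{\rho-1}(p^{\rho}-p^{i})},
\]
obtained by choosing the $\rho$-dimensional row space ($\binom{m}{\rho}_{\!p}$ choices) and then a surjection $\FF_p^{n}\twoheadrightarrow\FF_p^{\rho}$ onto it ($\prod_{i=0}^{\rho-1}(p^{n}-p^{i})$ choices). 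Dividing by $p^{nm}$ yields a closed form for $\Pr[\rank M=\rho]$.

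Next I would substitute $m=n+u$, $\rho=n-r$ and simplify, using $\prod_{i=0}^{\rho-1}(p^{N}-p^{i})=p^{N\rho}\prod_{j=N-\rho+1}^{N}(1-p^{-j})$. A short exponent count collapses all powers of $p$ to $p^{-r(r+u)}$ — here it is essential that with $\rho=n-r$ the $n^{2}$ and $nu$ terms cancel — and the surviving factor telescopes to
\[
\frac{\prod_{j=1}^{n}(1-p^{-j})\,\prod_{j=1}^{n+u}(1-p^{-j})}{\prod_{j=1}^{r}(1-p^{-j})\,\prod_{j=1}^{r+u}(1-p^{-j})\,\prod_{j=1}^{n-r}(1-p^{-j})}.
\]
Letting $n\to\infty$ with $r,u$ fixed, the three partial products over $j\le n$, $j\le n+u$, and $j\le n-r$ each tend to $\prod_{j=1}^{\infty}(1-p^{-j})$; one copy of this cancels a denominator factor, and what remains is exactly the constant in $(\ref{eq:alphaprime})$, i.e.\ $\alpha'_{p,u,r}$. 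For the rate of convergence, $\prod_{j=1}^{N}(1-p^{-j})=\prod_{j=1}^{\infty}(1-p^{-j})\cdot\bigl(1+O(p^{-N})\bigr)$, so the finite-$n$ probability equals $\alpha'_{p,u,r}$ up to a factor $1+O(p^{-(n-r)})$, which gives the claimed exponential convergence.

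I do not expect a genuinely hard step: the result is classical and the argument is essentially bookkeeping. The points requiring a little care are the edge cases where $r$ is small relative to $-u$: when $u<0$ one has $\rank M\le n+u$, so the left nullity is automatically at least $-u$ and the probability is identically zero for $r<-u$, in agreement with the suitably interpreted (vanishing) value of $\alpha'$; and one must make sure the telescoping of the finite products is written so that the limit is only taken on the genuinely convergent pieces.
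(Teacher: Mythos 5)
Your argument is correct and complete, but it is not the route the paper takes: the paper's ``proof'' of Theorem \ref{thm:randmatthm} is purely a citation (the limiting distribution to \cite{KLS}, the exponential rate to Theorem 1.1 of \cite{FulmanGoldstein}), whereas you supply a self-contained derivation from the classical enumeration of $n\times m$ matrices of rank $\rho$ over $\FF_p$. Your bookkeeping checks out: the bijection (row space, surjection onto it) gives the stated count, the exponent $n\rho+m\rho-\rho^2-nm=-(n-\rho)(m-\rho)$ specializes to $-r(r+u)$ at $\rho=n-r$, $m=n+u$, the three truncated Euler products combine to leave exactly the constant in (\ref{eq:alphaprime}), and the tail estimate $\prod_{j=1}^{N}(1-p^{-j})=\prod_{j=1}^{\infty}(1-p^{-j})\bigl(1+O(p^{-N})\bigr)$ yields the exponential rate for each fixed $r,u$ — which is the form of convergence actually used later in the proof of Theorem \ref{thm:SDLimitThm}. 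Your handling of the degenerate case $u<0$, $r<-u$ (probability identically zero, matching the extended definition of $\alpha^\prime$) is also the right thing to flag. What the citation buys the authors is brevity and a uniform statement covering total-variation-type bounds from \cite{FulmanGoldstein}; what your computation buys is transparency and independence from the literature, at the cost of about a page of algebra. Either is acceptable here.
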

\begin{proof}
This limiting behavior was known at least as far back as \cite{KLS}. The fact that this convergence is exponential in $n$ follows from Theorem 1.1 in \cite{FulmanGoldstein}, for example.
\end{proof}

We obtain Theorem \ref{MainThm} by relating the problem to a question about random matrices over $\FF_2$ and then applying Theorem \ref{thm:randmatthm}. Our proof proceeds as follows:

As described in Sections \ref{sec:descent}-\ref{sec:selcoker}, we equate the dimension of a co-dimension one subgroup of $\Sel_\phi(E/\QQ)$ with the dimension of the left-nullspace of an $n \times n-u$ matrix $\mathcal{\widehat{M}}$ with entries in $\FF_2$. If the entries of $\mathcal{\widehat{M}}$ were independent and random, then we would be done. Unsurprisingly however, there are dependencies between the entries in $\mathcal{\widehat{M}}$. Nonetheless, in Section \ref{sec:probapproach}, we show that under some mild assumptions regarding the values of certain characters involving $d$, $\mathcal{\widehat{M}}$ is equivalent to a block diagonal matrix $\begin{bmatrix}I & 0 \\ 0 & A \end{bmatrix}$ with high probability, where $I$ is an $n_0 \times n_0$ identity matrix and $A$ is an $n - n_0 \times n - n_0 -u$ matrix with independent random entries. Section \ref{sec:analytictech} then uses techniques from analytic number theory to show that the assumptions we made regarding the characters involving $d$ are satisfied with sufficiently high probability. As a result, we obtain Theorem \ref{MainThm}.

\subsection{Acknowledgements}

We would like to thank Benedek Valko for explaing to us how a result similar to Theorem \ref{thm:SDLimitThm} may be obtained via a generalization of the Markov chain approach developed in \cite{KMR2}. We would also like to thank Jordan Ellenberg for pointing out the relationship between the constants $\alpha_{r,u}$ and the notion of $u$-probabilities in the work of Cohen and Lenstra.


\section{$\phi$-Descent}\label{sec:descent}

We begin by defining the Selmer groups $\Sel_\phi(E/\Q)$ and $\Sel_\hatphi(E^{\prime}/\Q)$ and then giving an explicit description of the Selmer groups $\Sel_\phi(E^d/\Q)$ and $\Sel_\hatphi(E^{\prime d}/\Q)$ associated to the quadratic twist of an elliptic curve by a squarefree integer $d$.

Let $E$ be an elliptic curve with a single point of order two defined by $$y^2 = x^3 + Ax^2 + Bx.$$ and set $C = E(\Q)[2] = \langle (0,0) \rangle$. There is an isogenous curve $E^\prime$ given by a model $$y^2 = x^3 - 2Ax^2 + (A^2- 4B)x$$ and an isogeny $\phi:E\rightarrow E^\prime$ with kernel $C$. There is a Kummer map $$\kappa: E^\prime(\Q)/\phi(E(\Q)) \xrightarrow{\sim} \Q^\times/(\Q^\times)^2$$ given by $$\kappa((x,y)) = \left \{ \begin{array}{cl}\Delta  & \text{if  } (x,y) = (0,0) \\  x  & \text{if } (x,y) \ne (0,0)  \end{array} \right . $$ where $\Delta$ is the discriminant of $E$.

We have similarly defined local Kummer maps $$\kappa_v: E^\prime(\Q_v)/\phi(E(\Q_v)) \xrightarrow{\sim} \Q_v^\times/(\Q_v^\times)^2$$ for every completion $\Q_v$ of $\Q$ which give a commutative diagram for every place $v$ of $\Q$, where the restriction map $\res_v$ is the natural map $\Q^\times/(\Q^\times)^2 \rightarrow \Q_v^\times/(\Q_v^\times)^2$.

\begin{center}\leavevmode
\begin{xy} \xymatrix{
E^\prime(\Q)/\phi(E(\Q))  \ar[d] \ar[r]^{\hspace{0.2in}\kappa} & \Q^\times/(\Q^\times)^2 \ar[d]^{\res_v} \\
E^\prime(\Q_v)/\phi(E(\Q_v))   \ar[r]^{\hspace{0.25in}\kappa_v} & \Q_v^\times/(\Q_v^\times)^2 }
\end{xy}\end{center}

The $\phi$-Selmer group $\Sel_\phi(E/\Q)$ is defined as  \begin{equation*}\Sel_\phi(E/\Q) = \left \{ c \in  \Q^\times/(\Q^\times)^2 : \res_v(c) \in \kappa_v(E^\prime(\Q_v)/\phi(E(\Q_v)) ) \text{ for all places } v \text{ of } \Q \right \} \end{equation*}

If $p$ is a prime away from $2$ where $E$ has good reduction, the image of $\kappa_p$ is equal to the unramified subgroup of $\Q_p^\times/(\Q_p^\times)^2$ generated by the image of $\Z_p^\times$. This allows us to decribe the $\Sel_\phi(E/\Q)$ as the intersection of two finite dimensional $\Ftwo$ vector spaces.

Let $T$ be the set of places of $\Q$ dividing $2\Delta\infty$ and define $$V = \bigoplus_{v \in T} \Q_v^\times/(\Q_v^\times)^2.$$ Define a subspace $U \subset V$ as the image of the $T$-units $\Z_T^\times$ in $V$. Next, for each place $v \in T$, define $W_v$ as $$W_v = \kappa_v(E^\prime(\Q_v)/\phi(E(\Q_v)) )$$ and set $$W  = \bigoplus_{v \in T} W_v \subset V$$ It then follows that $\Sel_\phi(E/\Q) \simeq U \cap W$.

Exchanging the roles of $E$ and $\phi$ for those of $E^\prime$ and the dual isogeny $\hatphi:E^\prime \rightarrow E$ yields a $\hatphi$-Selmer group $\Sel_\hatphi(E^\prime/\Q)$ via the same construction.




\subsection{Tamagawa Ratios}

Standard descent technology tells us that the images of the local connecting maps $\kappa_p$ and $\kappa_p^\prime$ in $\Q_p^\times/(\Q_p^\times)^2$ are dual to each other via the Hilbert symbol pairing. 
It follows that the Selmer groups $\Sel_\phi(E/\Q)$ and $\Sel_\hatphi(E^\prime/\Q)$ are orthogonal under the sum of the Hilbert symbol pairings over the places in $T$. This duality gives us a way to compare the sizes of $\Sel_\phi(E/\Q)$ and $\Sel_\hatphi(E^\prime/\Q)$.

\begin{definition} The ratio $$\mathcal{T}(E/E^\prime) = \frac{\big | \Sel_\phi(E/\Q) \big |}{\big |\Sel_{\hatphi}(E^\prime/\Q)\big |}$$ is called the \textbf{Tamagawa ratio} of $E$. \end{definition}

The Tamagawa ratio can be computed using a local product formula.

\begin{theorem}[Cassels]\label{prodform2}
The Tamagawa ratio $\mathcal{T}(E/E^\prime)$ is given by $$\mathcal{T}(E/E^\prime) = \prod_{v\text{ of } \Q}\frac{|W_v|}{2}.$$
\end{theorem}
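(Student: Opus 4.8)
The plan is to reduce this to bilinear algebra over $\Ftwo$, feeding in only Hilbert reciprocity and the local self-duality recalled just above. Give $V = \bigoplus_{v \in T}\Q_v^\times/(\Q_v^\times)^2$ the symmetric, nondegenerate $\Ftwo$-bilinear form $\langle\, ,\rangle = \sum_{v \in T}(\, ,\,)_v$ obtained by summing the local Hilbert symbols. I would first record the local dimensions: $\dimF \Q_v^\times/(\Q_v^\times)^2$ is $2$ for a finite place $v \neq 2$, is $3$ for $v = 2$, and is $1$ for $v = \infty$, so, since $2,\infty \in T$, they sum to $\dimF V = 2|T|$. Next I would check that $U$, the image of $\Z_T^\times$, is a \emph{maximal} isotropic subspace. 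It is isotropic because for $a,b \in \Z_T^\times$ we have $\langle \res(a),\res(b)\rangle = \sum_{v\in T}(a,b)_v = \sum_{v}(a,b)_v = 0$: the first equality holds since the Hilbert symbol of two units at an odd place outside $T$ is trivial, and the second is Hilbert reciprocity. It is maximal because $\dimF U = |T| = \tfrac12\dimF V$, using that $\Z_T^\times/(\Z_T^\times)^2$ has $\Ftwo$-dimension $|T|$ (rank $|T|-1$ by the $T$-unit theorem, plus the torsion $\{\pm1\}$) and injects into $V$. In particular $U^\perp = U$. Finally, the local duality recalled above says $W_v$ and $W_v^\prime$ are exact annihilators of one another under $(\, ,\,)_v$, so, the form on $V$ being the orthogonal sum of the local ones, $W^\perp = W'$.

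Granting these facts the computation is formal. From $W^\perp = W'$, $U^\perp = U$, and $(U+W)^\perp = U^\perp \cap W^\perp$, nondegeneracy gives
$$\dimF(U \cap W') = \dimF(U+W)^\perp = \dimF V - \dimF(U+W) = \dimF V - \dimF U - \dimF W + \dimF(U\cap W),$$
so that $\dimF(U\cap W) - \dimF(U\cap W') = \dimF U + \dimF W - \dimF V = \dimF W - |T|$. Since $\Sel_\phi(E/\Q)\simeq U\cap W$ and $\Sel_{\hatphi}(E^\prime/\Q)\simeq U\cap W'$, and $|W| = \prod_{v\in T}|W_v|$ while $|U| = 2^{|T|}$, exponentiating yields
$$\mathcal{T}(E/E^\prime) = \frac{|U\cap W|}{|U\cap W'|} = \frac{|W|}{2^{|T|}} = \prod_{v\in T}\frac{|W_v|}{2}.$$
To rewrite the product over all places of $\Q$, I would observe that for $v\notin T$ the image $W_v$ of $\kappa_v$ is the unramified subgroup of $\Q_v^\times/(\Q_v^\times)^2$, which has order $2$, so each omitted factor $|W_v|/2$ equals $1$.

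The content is entirely in the two arithmetic inputs, and that is where a full write-up should dwell. The one needing the most care is the local self-duality $W_v^\perp = W_v^\prime$: this is the standard local duality underlying descent by a $2$-isogeny, a consequence of Tate local duality identifying $H^1(\Q_v,E[\phi])$ with the Pontryagin dual of $H^1(\Q_v,E^\prime[\hatphi])$ via a cup product that, under the Kummer identifications of both groups with $\Q_v^\times/(\Q_v^\times)^2$, becomes the Hilbert symbol; the point to pin down is that the images of the two descent maps come out as \emph{exact} orthogonal complements (the easy containment plus the dimension relation $\dimF W_v + \dimF W_v^\prime = \dimF \Q_v^\times/(\Q_v^\times)^2$). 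The other delicate point is the maximality of $U$, the one place where global input enters: injectivity of $\Z_T^\times/(\Z_T^\times)^2 \hookrightarrow V$ is Minkowski's theorem (an everywhere-unramified, everywhere-split quadratic extension of $\Q$ is trivial), and the assertion that an isotropic subspace of dimension $|T|$ is maximal uses precisely that $2$ and $\infty$ lie in $T$, so that the local dimensions sum to $2|T|$. Everything else is linear algebra over $\Ftwo$.
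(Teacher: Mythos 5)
Your argument is correct, but it is a genuinely different route from the paper's: the paper proves Theorem \ref{prodform2} purely by citation (Cassels' Theorem 1.1 together with (1.22) and (3.4) of \cite{Cassels8}, or alternatively a global duality statement from \cite{W} combined with the local duality of the images of $\kappa_v$ and $\kappa_v^\prime$), whereas you give a self-contained derivation. In effect you unpack the paper's second alternative: once one knows (a) that $W_v$ and $W_v^\prime$ are exact annihilators under $(\cdot,\cdot)_v$ and (b) that $U$ is a maximal isotropic subspace of $V$ for the summed Hilbert pairing, the identity $\dimF(U\cap W)-\dimF(U\cap W^\prime)=\dimF W-|T|$ is pure linear algebra, and exponentiating gives Cassels' product formula, the factors outside $T$ being $1$ because there $W_v$ is the unramified subgroup of order $2$. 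Your treatment of the two arithmetic inputs is sound: isotropy of $U$ is Hilbert reciprocity plus triviality of $(a,b)_v$ for units at odd $v\notin T$; $\dimF U=|T|$ combines the $T$-unit theorem with Minkowski's theorem for injectivity; and the dimension count $\dimF V=2|T|$ correctly uses $2,\infty\in T$. What your approach buys is transparency and independence from the literature (and it is the same duality formalism the paper already invokes for the orthogonality of $\Sel_\phi(E/\Q)$ and $\Sel_\hatphi(E^\prime/\Q)$); what the citation buys is brevity. If you were to write this into the paper, the one point to state carefully is the exactness of the local annihilator relation $W_v^\perp=W_v^\prime$ (not merely the easy containment), which you correctly flag as following from Tate local duality together with the dimension identity $\dimF W_v+\dimF W_v^\prime=\dimF \Q_v^\times/(\Q_v^\times)^2$.
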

\begin{proof}
This is a combination of Theorem 1.1 and equations (1.22) and (3.4) in \cite{Cassels8}. Alternatively, this follows from combining Theorem 2 in \cite{W} with the fact that the images of $\kappa_v$ and $\kappa^\prime_v$ are dual to each other.
\end{proof}

\section{Twisting}\label{sec:twisting}

Let $d$ be a squarefree integer and set $T_d = T \cup \{p \mid d\}$.  The set $T_d$ contains all of the places of $\Q$ above $2\infty$ and the places at which $E^d$ has bad reduction.
We define $$V^d = \bigoplus_{v \in T_d} \Q_v^\times/(\Q_v^\times)^2$$ and define $U^d \subset V^d$ as the image of $\Z_{T_d}^\times$ in $V^d$.  For each place in $v \in T_d$, we define $$W_v^d =  \kappa_v(E^{\prime d}(\Q_v)/\phi(E^d(\Q_v)) )$$%
and set $$W^d = \bigoplus_{v \in T_d} W_v^d \subset V^d.$$%
We then get that $\Sel_\phi(E^d/\Q) \simeq U^d \cap W^d$.

\subsection{Local Conditions at Twisted Primes}

If $p \mid d$ and $(p,2\Delta) = 1$, we can explicitly describe the subspace $W_p^d$ of $ \Q_p^\times/(\Q_p^\times)^2$.

\begin{lemma}\label{lem:loccond}
If $p \mid d$ and $(p,2\Delta) = 1$, then $$W_p^d = \left \{ \begin{array}{cl} \langle \Delta \rangle  & \text{if  } \left ( \frac{\Delta^\prime}{p} \right ) = -1 \\  \langle \Delta, d(A+2\sqrt{B}) \rangle & \text{if } \left ( \frac{\Delta^\prime}{p} \right ) = 1 \end{array} \right . ,$$ where $\Delta^\prime$ is the discriminant of $E^\prime$.
\end{lemma}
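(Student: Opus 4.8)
The plan is to compute $W_p^d = \kappa_v(E^{\prime d}(\Q_p)/\phi(E^d(\Q_p)))$ directly using the explicit description of the local Kummer map and the structure of the quadratic twist. Since $p \mid d$ and $(p, 2\Delta) = 1$, the curve $E$ has good reduction at $p$, but $E^d$ has bad (multiplicative or additive) reduction there because $p \mid d$. First I would write down an explicit Weierstrass model for the twist $E^d$: $y^2 = x^3 + dAx^2 + d^2Bx$, with isogenous curve $E^{\prime d}: y^2 = x^3 - 2dAx^2 + d^2(A^2 - 4B)x$ and twisted isogeny $\phi^d$. The discriminant of $E^d$ is $d^6$ times that of $E$ (up to the usual factors), so that $\Delta(E^d)$ differs from $\Delta$ by a square, and similarly for $\Delta^\prime$; thus the relevant Legendre symbols are $\left(\frac{\Delta^\prime}{p}\right)$ as stated. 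The point $(0,0)$ on $E^{\prime d}$ maps under $\kappa_v$ to $\Delta(E^d) \equiv \Delta \pmod{(\Q_p^\times)^2}$, which accounts for the generator $\langle \Delta \rangle$ that appears in both cases.

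Next I would determine the full image. One clean approach is to use Theorem \ref{prodform2} (Cassels) locally together with the duality between the images of $\kappa_v$ and $\kappa_v^\prime$: the subspace $W_p^d$ and its counterpart $W_p^{\prime d}$ for the dual isogeny are orthogonal complements under the Hilbert symbol pairing on $\Q_p^\times/(\Q_p^\times)^2$, which is a $2$-dimensional $\Ftwo$-space. So it suffices to compute $\dim_{\Ftwo} W_p^d$, and this is governed by whether $E^{\prime d}(\Q_p)/\phi^d(E^d(\Q_p))$ has order $1$ or $2$ beyond the contribution of $(0,0)$ — equivalently, by a local Tamagawa-type factor. The relevant dichotomy is whether the image is the $1$-dimensional unramified-type group $\langle \Delta \rangle$ or a $2$-dimensional group; since $p \mid d$, ramified classes enter, and the class $d(A + 2\sqrt{B})$ (when it is well-defined in $\Q_p^\times$, i.e. when $B$ is a square mod $p$) is the natural candidate for the extra ramified generator. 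The condition $\left(\frac{\Delta^\prime}{p}\right) = 1$ should be exactly the condition under which $E^{\prime d}$ acquires the relevant $\Q_p$-rational structure (splitting of the relevant quadratic, or a rational point making the quotient bigger), so that $W_p^d$ becomes $2$-dimensional; note $\Delta^\prime = A^2 - 4B$ up to squares, so $\left(\frac{\Delta^\prime}{p}\right) = 1$ iff $A^2 - 4B$ is a square mod $p$, which controls whether $x^3 - 2dAx^2 + d^2(A^2-4B)x$ factors further over $\Q_p$ and whether $A + 2\sqrt{B}$ makes sense.

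Concretely, I would proceed as follows. (i) Reduce the computation of $E^{\prime d}(\Q_p)/\phi^d(E^d(\Q_p))$ to the Kummer image via $\kappa_v$, noting $\kappa_v$ is injective. (ii) Parametrize points $(x,y) \in E^{\prime d}(\Q_p)$ with $x \neq 0$ and compute $\val_p(x)$; since $p \mid d$ and the model has coefficients divisible by $d$, analyze the Newton polygon of $Y^2 = X^3 - 2dAX^2 + d^2(A^2-4B)X$ over $\Q_p$ to see which valuations of $x$ (hence which classes in $\Q_p^\times/(\Q_p^\times)^2$) are achieved. This should show that odd-valuation classes appear precisely when $A^2 - 4B$ is a square in $\Z_p^\times$, i.e. when $\left(\frac{\Delta^\prime}{p}\right) = 1$, and that the achieved odd-valuation class is represented by $d(A + 2\sqrt{B})$. (iii) Combine the $(0,0) \mapsto \Delta$ contribution with the result of (ii) to get the stated two cases. (iv) As a consistency check, verify the answer against the local duality with $W_p^{\prime d}$ and against the local factor $|W_p^d|/2$ in Cassels's formula.

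The main obstacle I anticipate is step (ii): carefully tracking valuations and square classes through the twisted model when $p \mid d$ — in particular making sure the square root $\sqrt{B}$ is handled correctly (it lies in $\Q_p$ precisely when $\left(\frac{B}{p}\right) = 1$, and one must check this is equivalent to, or compatible with, $\left(\frac{\Delta^\prime}{p}\right) = 1$ modulo the structure of the problem), and confirming that no additional square classes sneak in from points with $x$ a $p$-adic unit. Everything else is a routine unwinding of definitions, but getting the exact generator $d(A + 2\sqrt{B})$ rather than some square-class-equivalent expression requires care with the explicit form of $\kappa_v$ on the twisted curve.
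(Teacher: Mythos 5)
Your outline misses the single idea that makes the paper's proof a two-line argument: for an odd prime $p\mid d$ with $(p,2\Delta)=1$, the image of the local Kummer map is exactly $\kappa_p(E^{\prime d}(\Q_p)[2])$ (this is Lemma 6.7 of \cite{K}, which the paper simply cites). Granting that, the lemma reduces to a symbolic computation of the $2$-torsion of $E^{\prime d}:y^2=x^3-2dAx^2+d^2(A^2-4B)x$: the nonzero $2$-torsion points have $x$-coordinates $d(A\pm 2\sqrt{B})$, the point $(0,0)$ maps to $\Delta$, and the other points are $\Q_p$-rational precisely when $\sqrt{B}\in\Q_p$, in which case they map to $d(A\pm 2\sqrt{B})$. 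Your plan instead routes through a Newton-polygon analysis of all of $E^{\prime d}(\Q_p)$, local duality, and a Tamagawa-type dimension count; none of this is actually carried out, and without the torsion-generation fact you have no a priori reason the image is spanned by the classes you propose, so as written the proposal is not yet a proof.

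More seriously, the ``main obstacle'' you flag at the end is the symptom of a concrete error: you assert that $\Delta'$ equals $A^2-4B$ up to squares, but in fact $\Delta=(A^2-4B)$ and $\Delta'=B$ up to squares (this is the Remark immediately following the lemma; check it from $\Delta(E)=16B^2(A^2-4B)$ and $\Delta(E')=256\,B\,(A^2-4B)^2$). With the correct identification there is nothing to reconcile: $\legendre{\Delta'}{p}=1$ is literally the condition $\legendre{B}{p}=1$, i.e.\ that $\sqrt{B}\in\Q_p$ and the extra $2$-torsion point of $E^{\prime d}$ is rational, which is exactly the dichotomy in the statement. As it stands, your outline ties the case division to whether $A^2-4B$ is a square mod $p$, which is the condition $\legendre{\Delta}{p}=1$ governing the $2$-torsion of $E^{d}$ rather than of $E^{\prime d}$; following it through would swap the roles of the type~2 and type~3 primes in Corollary \ref{cor:locconds} and give the wrong local conditions.
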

\begin{proof}
By Lemma 6.7 in \cite{K}, the image of $\kappa_p$ is given by $\kappa_p(E^{\prime d}(\Q_p)[2])$. The result then follows from symbolically computing the coordinates of $E^{\prime d}[2]$.
\end{proof}

\begin{remark}
Up to squares, we have $\Delta = (A^2 - 4B)(\Q^\times)^2$ and $\Delta^\prime = B(\Q^\times)^2$.
\end{remark}

We may be even more explicit about $W_p^d$
if we characterize the primes $p \mid d$ with $(p,2\Delta)=1$ by the values of the Legendre symbols $\legendre{\Delta}{p}$ and $\legendre{\Delta^\prime}{p}$.

\begin{definition}
Suppose that $(p,2\Delta) = 1$. \\
We say that $p$ is \emph{type 1} if $\legendre{\Delta}{p}= 1$ and $\legendre{\Delta^\prime}{p}=1$.\\
We say that $p$ is \emph{type 2} if $\legendre{\Delta}{p}=1$ and $\legendre{\Delta^\prime}{p}=-1$.\\
We say that $p$ is \emph{type 3} if $\legendre{\Delta}{p} = -1$ and $\legendre{\Delta^\prime}{p}=1$.\\
We say that $p$ is \emph{type 4} if $\legendre{\Delta}{p}=-1$ and $\legendre{\Delta^\prime}{p}=-1$.
\end{definition}

We now note that $W^d_p$ is dependent on the type of $p$.

\begin{corollary}\label{cor:locconds} If $p \mid d$ and $(p,2\Delta) = 1$, then
$$
W^d_p = \begin{cases}
\langle d(A+2\sqrt{B})\rangle & \textrm{if }p\textrm{ is of type 1}\\
1 & \textrm{if }p\textrm{ is of type 2}\\
\Q_p^\times/(\Q_p^\times)^2 & \textrm{if }p\textrm{ is of type 3}\\
\Z_p^\times/(\Z_p^\times)^2 & \textrm{if }p\textrm{ is of type 4}\\
\end{cases}
$$
\end{corollary}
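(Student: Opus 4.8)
The plan is to read the corollary off directly from Lemma~\ref{lem:loccond} by identifying the subgroups $\langle \Delta\rangle$ and $\langle \Delta, d(A+2\sqrt{B})\rangle$ of $\Q_p^\times/(\Q_p^\times)^2$ in terms of the Legendre symbols $\legendre{\Delta}{p}$ and $\legendre{\Delta^\prime}{p}$. First I would recall the structure of $\Q_p^\times/(\Q_p^\times)^2$ for odd $p$: it is a two-dimensional $\Ftwo$-vector space in which the unramified line is exactly $\Z_p^\times/(\Z_p^\times)^2$, spanned by the class of any non-square $p$-adic unit, while the class of any element of odd valuation spans a complementary (ramified) line.

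Next I would pin down the two subgroups. Since $(p,2\Delta)=1$, the discriminant $\Delta$ is a $p$-adic unit, so its class lies in $\Z_p^\times/(\Z_p^\times)^2$; it is trivial when $\legendre{\Delta}{p}=1$ and generates all of $\Z_p^\times/(\Z_p^\times)^2$ when $\legendre{\Delta}{p}=-1$. Then I would compute the valuation of $d(A+2\sqrt{B})$ in the cases where this element appears, namely when $\legendre{\Delta^\prime}{p}=1$, so that $B$ — and hence $\sqrt{B}$ — lies in $\Z_p^\times$ (recall $\Delta^\prime = B$ up to squares): because $d$ is squarefree with $p \mid d$ we have $\val_p(d)=1$, while $(A+2\sqrt{B})(A-2\sqrt{B}) = A^2-4B$, which equals $\Delta$ up to squares and is therefore a $p$-adic unit, forcing $\val_p(A+2\sqrt{B})=0$. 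Hence $d(A+2\sqrt{B})$ has odd valuation, its class in $\Q_p^\times/(\Q_p^\times)^2$ is nontrivial and ramified, and so $\langle \Delta, d(A+2\sqrt{B})\rangle$ has $\Ftwo$-dimension one greater than that of $\langle\Delta\rangle$.

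Finally I would split into the four types and combine these observations with Lemma~\ref{lem:loccond}. For types 2 and 4, where $\legendre{\Delta^\prime}{p}=-1$, the lemma gives $W_p^d = \langle\Delta\rangle$, which is trivial when $\legendre{\Delta}{p}=1$ (type 2) and equals $\Z_p^\times/(\Z_p^\times)^2$ when $\legendre{\Delta}{p}=-1$ (type 4). For types 1 and 3, where $\legendre{\Delta^\prime}{p}=1$, the lemma gives $W_p^d = \langle \Delta, d(A+2\sqrt{B})\rangle$; when $\legendre{\Delta}{p}=1$ (type 1) the generator $\Delta$ is trivial, so $W_p^d = \langle d(A+2\sqrt{B})\rangle$, while when $\legendre{\Delta}{p}=-1$ (type 3) the subgroup contains both the unramified line $\langle\Delta\rangle=\Z_p^\times/(\Z_p^\times)^2$ and the ramified class of $d(A+2\sqrt{B})$, hence equals all of $\Q_p^\times/(\Q_p^\times)^2$. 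This is an essentially routine unwinding of Lemma~\ref{lem:loccond}; the only step requiring any care is checking that $A+2\sqrt{B}$ is a $p$-adic unit, which is exactly what the identity $(A+2\sqrt{B})(A-2\sqrt{B}) = A^2-4B$ together with $p \nmid \Delta$ provides.
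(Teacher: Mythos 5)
Your proof is correct and follows exactly the route the paper takes: the paper's own proof is a one-line appeal to Lemma~\ref{lem:loccond} together with the splitting $\Q_p^\times/(\Q_p^\times)^2 \simeq \Z_p^\times/(\Z_p^\times)^2 \times \langle p\rangle/\langle p^2\rangle$, and your argument simply unwinds that splitting case by case, including the (worthwhile) check that $A+2\sqrt{B}$ is a $p$-adic unit via $(A+2\sqrt{B})(A-2\sqrt{B})=A^2-4B$.
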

\begin{proof}
This follows from Lemma \ref{lem:loccond} and the isomorphism $\Q_p^\times/(\Q_p^\times)^2 \xrightarrow{\sim} \Z_p^\times/(\Z_p^\times)^2 \times \langle p \rangle /\langle p^2 \rangle$.
\end{proof}

\begin{corollary}\label{cor:tambytype}
The valuation of $2$ in the Tamagawa ratio $\T(E^d/E^{\prime d})$ is given by $$\ord_2  \T(E^d/E^{\prime d}) = \sum_{v \mid 2\Delta\infty} (\dimF W_v^d -1) + |T_d^3| - |T_d^2|,$$ where $T_d^i = \{ p \mid d: (p,2\Delta) = 1 \text{ and $p$ is over type $i$ } \}$.
\end{corollary}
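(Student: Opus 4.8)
The plan is to apply Cassels' local product formula (Theorem~\ref{prodform2}) to the twisted curve $E^d$ and then read off each local factor, using the remark preceding the definition of $V$ for the good-reduction primes and Corollary~\ref{cor:locconds} for the twisted primes.

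First I would invoke Theorem~\ref{prodform2} with $E^d$ in place of $E$, obtaining $\T(E^d/E^{\prime d}) = \prod_{v} \frac{|W_v^d|}{2}$. If $p \nmid 2\Delta d$ then $E^d$ has good reduction at $p$ and $p \neq 2$, so $W_p^d$ is the unramified subgroup of $\Q_p^\times/(\Q_p^\times)^2$ generated by the image of $\Z_p^\times$; for $p$ odd this subgroup has order $2$, so such a place contributes the trivial factor $1$. Hence the product is effectively over $v \in T_d$, and taking $\ord_2$ gives
\[
\ord_2 \T(E^d/E^{\prime d}) = \sum_{v \in T_d}(\dimF W_v^d - 1).
\]

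Next I would split $T_d$ into the places dividing $2\Delta\infty$ and the primes $p \mid d$ with $(p,2\Delta)=1$; the latter are partitioned into $T_d^1,\dots,T_d^4$ by type. The contribution of the first group is precisely $\sum_{v\mid 2\Delta\infty}(\dimF W_v^d - 1)$, so it remains to compute $\dimF W_p^d - 1$ for $p \mid d$ of each type from Corollary~\ref{cor:locconds}. For type $2$, $W_p^d$ is trivial, contributing $-1$; for type $3$, $W_p^d$ is all of $\Q_p^\times/(\Q_p^\times)^2$, of $\Ftwo$-dimension $2$, contributing $+1$; for type $4$, $W_p^d = \Z_p^\times/(\Z_p^\times)^2$, of dimension $1$, contributing $0$. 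Summing over the primes dividing $d$ then yields $|T_d^3| - |T_d^2|$, provided the type-$1$ primes contribute $0$, which finishes the proof.

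The one point that needs care, and which I expect to be the main obstacle, is verifying that a type-$1$ prime contributes $0$, i.e. that $W_p^d = \langle d(A+2\sqrt B)\rangle$ has dimension exactly $1$ rather than $0$. Since $p$ is of type $1$ we have $\legendre{\Delta^\prime}{p}=\legendre{B}{p}=1$, so $\sqrt B \in \Q_p$ and $A+2\sqrt B$ lies in $\Q_p^\times$; from $(A+2\sqrt B)(A-2\sqrt B) = A^2 - 4B$ and the fact that $A^2-4B \equiv \Delta$ modulo squares with $p \nmid \Delta$, we get $\val_p(A+2\sqrt B) = 0$. Because $\val_p(d)=1$, the element $d(A+2\sqrt B)$ has odd valuation and is therefore not a square in $\Q_p$, so $\dimF W_p^d = 1$. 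Assembling the pieces gives the claimed identity.
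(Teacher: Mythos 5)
Your proposal is correct and follows exactly the route the paper intends: apply Cassels' product formula (Theorem~\ref{prodform2}) to $E^d$ and compute the local factors at the twisted primes via Corollary~\ref{cor:locconds} (the paper's one-line proof cites this, albeit with a self-referential typo). Your extra verification that a type-$1$ prime contributes dimension exactly $1$ is a worthwhile detail the paper leaves implicit, and your valuation argument for it is sound.
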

\begin{proof}
This follows from Theorem \ref{prodform2} combined with Corollary \ref{cor:tambytype}.
\end{proof}

\begin{proposition}\label{prop:tamcalc}
The subspace $W_v^d$ is determined by the image of $d \in \Q_v^\times/(\Q_v^\times)^2$. Therefore, $$\ord_2 \T(E^d/E^{\prime d}) = c_d + |T_d^3| - |T_d^2|$$ where $c_d$ is determined entirely by the image of $d$ in $\prod_{v \mid 2\Delta\infty} \Q_v^\times/(\Q_v^\times)^2$.
\end{proposition}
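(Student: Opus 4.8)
The plan is to prove Proposition \ref{prop:tamcalc} by upgrading Corollary \ref{cor:tambytype} in two stages: first showing that each $W_v^d$ for $v \mid 2\Delta\infty$ depends only on the class of $d$ in $\Q_v^\times/(\Q_v^\times)^2$, and then packaging the resulting sum $\sum_{v \mid 2\Delta\infty}(\dimF W_v^d - 1)$ as a single constant $c_d$ depending only on the image of $d$ in $\prod_{v \mid 2\Delta\infty}\Q_v^\times/(\Q_v^\times)^2$.

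For the first stage, I would recall that $W_v^d = \kappa_v(E^{\prime d}(\Q_v)/\phi(E^d(\Q_v)))$ and that, for the twisted curve $E^d$, the relevant quadratic twist data entering the coefficients $A, B$ (hence the local points and the Kummer map) is governed precisely by the class of $d$ in $\Q_v^\times/(\Q_v^\times)^2$. Concretely, $E^d$ and $E^{d'}$ are isomorphic over $\Q_v$ whenever $d \equiv d' \pmod{(\Q_v^\times)^2}$, and such an isomorphism is compatible with the isogeny $\phi$ and carries the local Kummer map for $E^d$ to that for $E^{d'}$; therefore it identifies $W_v^d$ with $W_v^{d'}$ as subgroups of $\Q_v^\times/(\Q_v^\times)^2$ (after transporting along the induced automorphism of $\Q_v^\times/(\Q_v^\times)^2$, which for the dimension count is harmless). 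Thus $\dimF W_v^d$ is a function of the image of $d$ in $\Q_v^\times/(\Q_v^\times)^2$ alone.

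For the second stage, since $T$ is finite and each factor $\Q_v^\times/(\Q_v^\times)^2$ is finite (dimension $1$, $2$, or $3$ over $\Ftwo$), the product $\prod_{v \mid 2\Delta\infty} \Q_v^\times/(\Q_v^\times)^2$ is a finite set. Define
$$
c_d = \sum_{v \mid 2\Delta\infty} (\dimF W_v^d - 1);
$$
by the first stage this is a well-defined function of the image of $d$ in that finite product. Substituting into the formula of Corollary \ref{cor:tambytype} gives
$$
\ord_2 \T(E^d/E^{\prime d}) = c_d + |T_d^3| - |T_d^2|,
$$
which is exactly the claim.

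The main obstacle is the first stage: one must check carefully that the identification $W_v^d \cong W_v^{d'}$ really only uses $d \bmod (\Q_v^\times)^2$ and does not secretly depend on a global choice of $d$ or on primes outside $v$. This is where the explicit description of $\kappa_v$ from Section \ref{sec:descent} (the formula $\kappa((x,y)) = x$ for $(x,y) \neq (0,0)$ and $\kappa((0,0)) = \Delta$) together with the behavior of $\Delta$, $A$, $B$ under twisting by $d$ must be invoked; since all of these transform by explicit powers of $d$, their classes modulo squares in $\Q_v^\times$ depend only on $d \bmod (\Q_v^\times)^2$, and the verification is routine once set up. Everything else is bookkeeping with finite sets.
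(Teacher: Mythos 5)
Your proposal is correct and follows essentially the same route as the paper: observe that $W_v^d = \kappa_v(E^{\prime d}(\Q_v)/\phi(E^d(\Q_v)))$ depends only on the $\Q_v$-isomorphism class of $E^d$, hence only on $d$ modulo $(\Q_v^\times)^2$, and then feed this into Corollary \ref{cor:tambytype} by setting $c_d = \sum_{v \mid 2\Delta\infty}(\dimF W_v^d - 1)$. Your hedge about ``transporting along the induced automorphism'' is unnecessary, since the isomorphism $E^d \to E^{d'}$ scales coordinates by squares and thus acts trivially on $\Q_v^\times/(\Q_v^\times)^2$, so the subspaces literally coincide.
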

\begin{proof}
The image of $\kappa_v$ in $\Q_v^\times/(\Q_v^\times)^2$ is dependent only on the isomorphism class of $E^d$ over $\Q_v$. As a result, this image is the same for all $d$ with a given image in  $\Q_v^\times/(\Q_v^\times)^2$. The result then follows from Corollary \ref{cor:tambytype}.
\end{proof}

\section{Selmer Groups as Cokernels}\label{sec:selcoker}

Let $E^d$ be the quadratic twist of a fixed elliptic curve $E$ by $d$. The point $(0,0) \in E^{\prime d}(\Q)$ maps to $\Delta \in \Sel_\phi(E/\Q) \subset \Q^\times/(\Q^\times)^2$. Since $E$ is assumed to have a single point of order two, we find that $\Delta$ is not a square in $\Q^\times$. Let $\Sel_\phi(E^d/\QQ)_0$ be any co-dimension one subspace of $\Sel_\phi(E^d/\QQ)$ not containing $\Delta$. We then have $\Sel_\phi(E^d/\QQ) = \Sel_\phi(E^d/\QQ)_0 \oplus \Delta\Sel_\phi(E^d/\QQ)_0$. The goal of this section is produce a matrix $\mathcal{M}$ such that one such $\Sel_\phi(E^d/\Q)_0$ may be identified with the left-nullspace of $\mathcal{M}$.

Having produced $\mathcal{M}$, we will describe a process called \textit{surgery} to deform $\mathcal{M}$ into a matrix $\widehat{\mathcal{M}}$ such that the left nullspaces of $\mathcal{M}$ and $\widehat{\mathcal{M}}$ have the same dimension. The entries of $\widehat{\mathcal{M}}$ will be easier to describe that those of $\mathcal{M}$ and its dimension will be easier to model.

For this section, we will assume that $d$ is squarefree and $(d,2\Delta) = 1$. We will also adopt the notation $\legendrea{\cdot}{p}$ to refer to the additive Legendre character taking values in $\FF_2$.

\subsection{An Easy Presentation}\label{sec:easypres}

As described in Section \ref{sec:twisting}, $\Sel_\phi(E/\QQ)$ is given by the intersection $\Sel_\phi(E^d/\QQ) = U^d \cap W^d \subset V^d$. If $U_0^d$ is any co-dimension one subspace of $U^d$ that does not contain $\Delta$, then we may take $\Sel_\phi(E^d/\Q)_0$ to be $U_0^d \cap W^d$.

A natural presentation for $\Sel_\phi(E^d/\QQ)_0$ would then be the left-nullspace of a matrix whose rows correspond to a basis of $U_0^d$ and whose columns correspond to characters on $V^d$ whose simultaneous vanishing defines $W^d$.

Constructing such characters directly can be difficult, so we choose an alternative presentation. The columns of the matrix $\mathcal{M}$ will correspond to a set of characters on $V^d$ whose simultaneous vanishing defines the zero subspace. Then, in addition to rows corresponding to a basis for $U_0^d$, we also include rows corresponding to a basis of $W^d$.

\subsubsection{Characters on $V^d$}\label{subsubsec:chars}

We recall that $V^d$ is given by the direct sum $\oplus_{v \in T_d} \Q_v^\times/(\Q_v^\times)^2$. Our characters will respect this decomposition.

For a place $p \nmid 2\infty$, we will define a pair of characters $\chi_p,\overline{\ord_p}:\Q_p^\times/(\Q_p^\times)^2 \rightarrow \FF_2$ via $\overline{\ord_p}(\alpha) = \ord_p(\alpha) \pmod 2$ and $\chi_p(\alpha) = \left ( \frac{\alpha^\prime}{p} \right )$ where $\alpha^\prime = \frac{\alpha}{p^{\ord_p \alpha}}$.

For $v = \infty$, we define a single character $\overline{\ord_\infty}(\alpha):\RR^\times/(\RR^\times)^2\rightarrow \FF_2$ by  $$\overline{\ord_\infty}(\alpha) = \left \{ \begin{array}{cl} 0  & \text{if  } \alpha > 0  \\ 1   & \text{if } \alpha < 0 \end{array} \right . $$

For $v = 2$, we define a valuation character $\overline{\ord_2}$ as for other primes $p$ and a pair of characters $\chi_2,\chi_2^\prime:\Q_2^\times/(\Q_2^\times)^2 \rightarrow \FF_2$ via  $$\chi_2(\alpha) = \left \{ \begin{array}{cl} 0  & \text{if  } \alpha^\prime \equiv 5 \pmod{8}  \\ 1   & \text{if } \alpha^\prime \not \equiv 5 \pmod{8} \end{array} \right . \text{ and } \chi_2^\prime(\alpha) = \left \{ \begin{array}{cl} 0  & \text{if  } \alpha^\prime \equiv 3 \pmod{8}  \\ 1   & \text{if } \alpha^\prime \not \equiv 3 \pmod{8} \end{array} \right .,$$ where $\alpha^\prime = \frac{\alpha}{2^{\ord_2 \alpha}}$.

These characters all extend to characters of $V^d$ given by first projecting $V^d$ onto $ \Q_v^\times/(\Q_v^\times)^2$ and then applying the character on $\Q_v^\times/(\Q_v^\times)^2$. We will let $\Psi$ denote the union of these characters over all $v \in T_d$ and $\Psi_\text{Orig}$ denote the subset coming from $v \mid 2\Delta\infty$.

\subsection{The Rows of $\mathcal{M}$}\label{subsubsec:rowsMphi}

The rows of $\mathcal{M}$ will be of two types. The first type are dense rows corresponding to an $\FF_2$-basis $B_U$ for $U_0^d$. Since $\Delta$ is non-square and $T_d$ contains all places dividing $\Delta$, there is at least one $v \in T$ such that $\overline{\ord_v} \Delta \ne 0$. Fix one such $v$. If $v \nmid \infty$, then we may take $B_U  = \{-1\} \cup \{p | 2d\Delta_E : p \nmid v\}$. If $v \mid \infty$, then we take $B_U  = \{p | 2d\Delta_E \}$. This choice of $B_U$ fixes the subspace $U_0^d$ and therefore the choice of $\Sel_\phi(E^d/\Q)_0$ being presented. We denote the set of $p \in B_U$ such that $p \mid 2\Delta\infty$ by $B_{U,\text{Orig}}$.

The second type of rows will be those corresponding to the basis of $B_W$ of $W^d$. The subspace $W^d$ decomposes as a direct sum $W^d = \oplus_{v \in T_d} W_v^d$ and we choose $B_W$ to respect this decomposition; that is we start with a basis for each $W_v^d$ and lift these to a basis of $B_W$ of $W^d$ such that each $b \in B_W$ projects non-trivially into exactly one $W_v^d$. For convenience, we choose a basis for $W_v^d$ such that $\overline{\ord_v}$ is non-trivial on at most one basis element. If $p \mid d$ is a prime of type $3$, we explicitly choose our basis for $W_p^d = \Q_p^\times/(\Q_p^\times)^2$ to be $\{ \Delta, d(A+2\sqrt{B})\}$ as in Lemma \ref{lem:loccond}.

For each $b \in B_U \cup B_W$, the corresponding row in $\mathcal{M}$ is given by $(\psi(b))_{\psi \in \Psi}$. For notational convenience, we will sometimes refer to a row of $\mathcal{M}$ as an element of $B_U$ or $B_W$ and we will similarly refer to columns by the character in $\Psi$ associated to them.

\begin{proposition}
$\Sel_\phi(E^d/\QQ)_0$ is given by the left nullspace of $\mathcal{M}$.
\end{proposition}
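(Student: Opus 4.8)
The plan is to read the statement as a short piece of linear algebra resting on a single structural property of the character set $\Psi$, so I would first isolate that property and then turn the crank. Recall that $\mathcal{M}$ has its rows indexed by $B_U\cup B_W$, its columns indexed by $\Psi$, and its $(b,\psi)$-entry equal to $\psi(b)$; hence a vector $(a_b)_{b\in B_U\cup B_W}\in\FF_2^{B_U\cup B_W}$ lies in the left nullspace of $\mathcal{M}$ precisely when $\sum_b a_b\,\psi(b)=0$ for every $\psi\in\Psi$, i.e. when the element $\sum_b a_b\,b\in V^d$ is annihilated by every character in $\Psi$. The characters in $\Psi$ were chosen exactly so that $\bigcap_{\psi\in\Psi}\ker\psi=0$ inside $V^d$ (the point I return to below), and granting this I would conclude that $(a_b)$ lies in the left nullspace if and only if $\sum_b a_b\,b=0$ in $V^d$. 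In other words, $(a_b)\mapsto\sum_b a_b\,b$ identifies the left nullspace of $\mathcal{M}$ with the space of $\FF_2$-linear relations among the rows, viewed as elements of $V^d$.

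Next I would match that relation space with $\Sel_\phi(E^d/\Q)_0=U_0^d\cap W^d$. Writing a left null vector as $(a_b)=(\alpha_u)_{u\in B_U}\oplus(\beta_w)_{w\in B_W}$, the relation $\sum_b a_b\,b=0$ becomes $\sum_u\alpha_u u=\sum_w\beta_w w$ in characteristic two, and this common value $x$ lies in $U_0^d$ (being in the span of $B_U$) and in $W^d$ (being in the span of $B_W$), hence in $U_0^d\cap W^d$. I would define $\Phi$ on the left nullspace by $(a_b)\mapsto x=\sum_u\alpha_u u$. It is injective because $x=0$ forces all $\alpha_u=0$ by linear independence of $B_U$, which then forces all $\beta_w=0$ by linear independence of $B_W$; it is surjective because any $x\in U_0^d\cap W^d$ has a unique expansion in the basis $B_U$ of $U_0^d$ and a unique expansion in the basis $B_W$ of $W^d$, and stacking these two coordinate vectors yields a left null vector mapping to $x$ (the sum of the two expansions being $2x=0$). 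Since $\Phi$ is visibly $\FF_2$-linear, this gives the asserted isomorphism between the left nullspace of $\mathcal{M}$ and $\Sel_\phi(E^d/\Q)_0$.

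The only point that genuinely requires verification is the claim that $\Psi$ separates points of $V^d$, and the main (modest) obstacle is reducing it to a single finite local check. Because $V^d=\bigoplus_{v\in T_d}\Q_v^\times/(\Q_v^\times)^2$ and every character in $\Psi$ factors through one summand, $\bigcap_{\psi}\ker\psi=0$ follows once, for each $v\in T_d$, the characters attached to $v$ separate the points of $\Q_v^\times/(\Q_v^\times)^2$. For odd $p$ this is immediate: $\overline{\ord_p}$ records the valuation (trivial on units, nontrivial on $\langle p\rangle$) while $\chi_p$ records the class of the unit part (trivial on $\langle p\rangle$, nontrivial on non-square units), so the pair forms a basis of the dual of the order-four group $\langle p\rangle\times\Z_p^\times/(\Z_p^\times)^2$; for $v=\infty$, $\overline{\ord_\infty}$ is the nontrivial character of $\RR^\times/(\RR^\times)^2\cong\Zt$. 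The remaining case $v=2$ is the only computation: $\overline{\ord_2}$ handles the valuation direction, and one tabulates that on the four unit classes represented by $1,3,5,7\pmod{2}^{3}$ the pair $\chi_2,\chi_2'$ takes four distinct values, so $\overline{\ord_2},\chi_2,\chi_2'$ separate the eight classes of $\Q_2^\times/(\Q_2^\times)^2$. I expect that finite tabulation modulo $8$ to be the only real content; everything else is the bookkeeping above.
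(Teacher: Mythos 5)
Your argument is essentially the paper's: the paper's proof is exactly your second paragraph (the bijection between linear relations among the rows and elements of $U_0^d\cap W^d$, using linear independence of $B_U$ and of $B_W$), with the separation property of $\Psi$ taken for granted as the design requirement announced in Section \ref{sec:easypres} rather than verified. Your explicit reduction of that property to a place-by-place check is a harmless and correct addition. One wrinkle in the step you call ``the only real content'': with the paper's literal definitions, $\chi_2$ and $\chi_2^\prime$ send the unit classes $1,3,5,7\pmod 8$ to $(1,1),(1,0),(0,1),(1,1)$ respectively, so they fail to separate $1$ from $7$ --- indeed as written they are not homomorphisms at all, since $\chi_2(1)=\chi_2^\prime(1)=1$. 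The tabulation you assert succeeds only after replacing them by the evidently intended characters with kernels $\{1,5\}$ and $\{1,3\}$ in $(\Z/8\Z)^\times$; this is a typo in the paper rather than a flaw in your reasoning, but since you deferred the tabulation rather than performing it, it is worth recording that the check as literally posed does not go through.
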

\begin{proof}
It is easy to see that the sets $B_U$ and $B_W$ are each linearly independent. Suppose that some non-trivial linear combination of rows in $B_U \cup B_W$ sums to zero. We then have some $s \in \Q^\times/(\Q^\times)^2$ that can be expressed as both a non-trivial linear combination of rows in $B_U$ and as a non-trivial linear combination of rows in $B_W$. As a result, we find that $s \in   U_0^d$ and $s \in W^d$, and therefore that $s \in \Sel_\phi(E^d/\Q)_0$.

Now suppose that $s \in \Sel_\phi(E^d/\Q)_0$. Since $s \in U_0^d \cap W^d$, $s$ may therefore be expressed both as a unique linear combination of elements in $B_U$ and as a unique linear combination of elements of $B_W$. As a result, $s$ gives rise to a unique left-nullvector of $C$.
\end{proof}

The matrix $\mathcal{M}$ has a structural nullvector arising from the duality between $\Sel_\phi(E^d/\QQ)$ and $\Sel_{\hat\phi}(E^{\prime d}/\QQ)$. Let $(\cdot, \cdot)_v$ be the (additive) Hilbert symbol on $\Q_v^\times/(\Q_v^\times)^2$ and define a character $\chi:V^d \rightarrow \FF_2$ as $\chi(x) = \sum_{v \in T_d} (x,\Delta^\prime)_v$.

\begin{proposition}\label{prop:rightnullvector}
The character $\chi$ gives rise to a right nullvector of $\mathcal{M}$.
\end{proposition}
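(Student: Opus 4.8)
\textit{Proof proposal.} The plan is to reduce the statement to the vanishing of $\chi$ on every row of $\mathcal{M}$. Since the characters in $\Psi$ have trivial common kernel, $\Psi$ spans $(V^d)^\vee$, so we may write $\chi=\sum_{\psi\in\Psi}c_\psi\psi$ with $c_\psi\in\FF_2$; the coefficient vector $(c_\psi)_{\psi\in\Psi}$ is then a right nullvector of $\mathcal{M}$ exactly when $\chi(b)=\sum_\psi c_\psi\psi(b)=0$ for every row $b\in B_U\cup B_W$ (two such expressions of $\chi$ differ by a linear dependence among the $\psi$, so they give right nullvectors simultaneously). Thus it suffices to check $\chi(b)=0$ separately for $b\in B_U$ and for $b\in B_W$.

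For a row $b\in B_U$: here $b$ is the image in $V^d$ of a global square class represented by $-1$ or by a prime dividing $2d\Delta$, and such a class is a unit at every finite place outside $T_d$; likewise $\Delta'\equiv B$ is a unit at every finite place outside $T$, since every prime dividing $B$ divides the discriminant $\Delta$ of $E$ and hence lies in $T\subseteq T_d$. At any place $w\notin T_d$ — necessarily a finite odd place, as $2,\infty\in T$ — both arguments are units, so $(b,\Delta')_w=0$. Therefore $\chi(b)=\sum_{v\in T_d}(b,\Delta')_v=\sum_{\text{all }v}(b,\Delta')_v=0$ by Hilbert reciprocity.

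For a row $b\in B_W$: by our choice of $B_W$ respecting the decomposition $W^d=\bigoplus_{v\in T_d}W_v^d$, the row $b$ is supported at a single place $v$, with $v$-component $b_v\in W_v^d$, so $\chi(b)=(b_v,\res_v\Delta')_v$. I would invoke the local duality recalled in the Tamagawa-ratio discussion: the images $W_v^d=\kappa_v\bigl(E^{\prime d}(\Q_v)/\phi(E^d(\Q_v))\bigr)$ and $W_v^{\prime d}:=\kappa'_v\bigl(E^d(\Q_v)/\hat\phi(E^{\prime d}(\Q_v))\bigr)$ are exact orthogonal complements under $(\,\cdot\,,\,\cdot\,)_v$. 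Hence $(b_v,\res_v\Delta')_v=0$ for all $b_v\in W_v^d$ precisely when $\res_v\Delta'\in W_v^{\prime d}$, and this holds because the rational $2$-torsion point $(0,0)\in E^d(\Q_v)$ maps under $\kappa'_v$ to the class of the discriminant of $E^{\prime d}$, which equals $\Delta'\equiv B$ modulo squares (the twisting parameter $d$ enters $\mathrm{disc}(E^{\prime d})$ only to an even power). Combining the two cases proves the proposition.

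The routine ingredients are Hilbert reciprocity and the square-class identity $\mathrm{disc}(E^{\prime d})\equiv B$; the one point requiring genuine care — already implicit in the Tamagawa-ratio section but worth isolating — is the local orthogonality $(W_v^d)^{\perp}=W_v^{\prime d}$ together with the identification of $\Delta'$, rather than some other global class, as the Kummer image of the rational $2$-torsion of $E^{\prime d}$. This is exactly what singles out $\chi$ as the structural right nullvector, matching the duality between $\Sel_\phi(E^d/\Q)$ and $\Sel_{\hat\phi}(E^{\prime d}/\Q)$ noted before the statement.
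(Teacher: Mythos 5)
Your proposal is correct and follows essentially the same route as the paper: the rows from $B_W$ are handled by the local duality of the images of $\kappa_v$ and $\kappa'_v$ together with the identification of $\Delta'$ as the Kummer image of $(0,0)$, and the rows from $B_U$ are handled by Hilbert reciprocity plus the vanishing of the local symbols outside $T_d$. Your added care in reducing the claim to vanishing on each row (via expressing $\chi$ in terms of $\Psi$) is exactly the final remark of the paper's proof, so there is nothing to add.
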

\begin{proof}
We consider the value of $\chi$ on the elements of $B_U$ and $B_W$ separately. We begin by noting that $\Delta^\prime \in \Sel_\phihat(E^{\prime d}/\Q) \subset \Q^\times/(\Q^\times)^2$ is the image of $(0,0) \in E^d(\QQ)$ under the map $\kappa^\prime_v$.

If $w \in B_W$, then $w$ projects non-trivially into exactly one $W_v^d$, so $\chi(w) = (w_v,\Delta^\prime)_v$, where $w_v$ is the image of $w$ in $W_v^d$. By design, $w_v$ is in the image of $\kappa_v$ and $\Delta^\prime$ is in the image of $\kappa^\prime_v$. Since the images of $\kappa_v$ and $\kappa_v^\prime$ are dual to each other via the Hilbert symbol pairing, we find that $\chi(w) = (w_v,\Delta^\prime)_v = 0$.

If $u \in B_U$, then the product formula for Hilbert symbols tells us that $\sum_{v \text{ of } \Q} (u, \Delta^\prime)_v = 0$. Since $\Delta^\prime$ and $u$ have trivial valuation for $v \not \in T_d$, we have $(u,\Delta^\prime)_v = 0$ for $v \not \in T_d$. As a result, we have $\chi(u) = \sum_{v \in T_d} (u, \Delta^\prime)_v = 0$.

Finally, we note that we are able to write $\chi$ as a sum of characters in $\Psi$ and as a result, $\chi$ gives rise to a dependency among the columns of $\mathcal{M}$.
\end{proof}

We conclude this section by analyzing the dimensions of $\mathcal{M}$.

\begin{lemma}\label{lem:rowcolcount}
The matrix $\mathcal{M}$ has $2|T_d| + \ord_2  \T(E^d/E^{\prime d})-1$ rows and $2|T_d|$ columns.
\end{lemma}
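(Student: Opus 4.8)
The plan is to count the rows and columns of $\mathcal{M}$ directly from the construction in Section \ref{subsubsec:rowsMphi}, using the decomposition $V^d = \bigoplus_{v \in T_d} \Q_v^\times/(\Q_v^\times)^2$ and tracking the contribution of each place $v \in T_d$ separately. First I would count the columns. The characters in $\Psi$ are defined place-by-place: for each $p \in T_d$ with $p \nmid 2\infty$ we attached the two characters $\overline{\ord_p}$ and $\chi_p$; for $v = 2$ we attached three characters $\overline{\ord_2}, \chi_2, \chi_2^\prime$; and for $v = \infty$ we attached the single character $\overline{\ord_\infty}$. Since $\dimF \Q_p^\times/(\Q_p^\times)^2 = 2$ for finite $p \neq 2$, $\dimF \Q_2^\times/(\Q_2^\times)^2 = 3$, and $\dimF \RR^\times/(\RR^\times)^2 = 1$, in each case the number of characters attached at $v$ equals $\dimF \Q_v^\times/(\Q_v^\times)^2$, and one checks they are a basis of the dual. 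Hence the total number of columns is $\sum_{v \in T_d} \dimF \Q_v^\times/(\Q_v^\times)^2 = \dimF V^d = 2|T_d|$ (the "$2$" absorbing the $+1$ from $v=2$ against the $-1$ from $v=\infty$).

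Next I would count the rows. By construction the rows fall into the two families $B_U$ and $B_W$, which are disjoint, so the row count is $|B_U| + |B_W|$. The set $B_U$ is a basis for the codimension-one subspace $U_0^d \subset U^d$, and $U^d$ is the image of the $T_d$-units $\Z_{T_d}^\times$ in $V^d$. Here I would invoke the Dirichlet $S$-unit theorem: $\Z_{T_d}^\times$ has rank $|T_d| - 1$ as a group (the finite places in $T_d$ give $|T_d|-1$ generators beyond $\pm 1$, since $\infty \in T_d$), so modulo squares $U^d$ has $\Ftwo$-dimension $|T_d|$, and therefore $|B_U| = \dimF U_0^d = |T_d| - 1$. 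This matches the explicit description of $B_U$ given in the text (either $\{-1\} \cup \{p \mid 2d\Delta_E : p \nmid v\}$ or $\{p \mid 2d\Delta_E\}$), which in both cases has $|T_d| - 1$ elements since $|T_d|$ is the number of rational primes dividing $2d\Delta_E\infty$.

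For $|B_W|$, since $W^d = \bigoplus_{v \in T_d} W_v^d$ and $B_W$ respects this decomposition, $|B_W| = \dimF W^d = \sum_{v \in T_d} \dimF W_v^d$. I would split this sum as $\sum_{v \mid 2\Delta\infty} \dimF W_v^d + \sum_{p \mid d,\ (p,2\Delta)=1} \dimF W_p^d$. For the twisted primes, Corollary \ref{cor:locconds} gives $\dimF W_p^d = 1, 0, 2, 1$ according as $p$ is of type $1,2,3,4$, so that part of the sum is $|T_d^1| + 2|T_d^3| + |T_d^4|$. Writing each $\dimF W_v^d = 1 + (\dimF W_v^d - 1)$ and using that the number of twisted primes $p \mid d$ with $(p,2\Delta)=1$ together with the places above $2\Delta\infty$ accounts for all of $T_d$, I would rewrite $\sum_{v \in T_d}\dimF W_v^d = |T_d| + \sum_{v \mid 2\Delta\infty}(\dimF W_v^d - 1) + |T_d^3| - |T_d^2|$, and then recognize via Corollary \ref{cor:tambytype} (or Proposition \ref{prop:tamcalc}) that $\sum_{v \mid 2\Delta\infty}(\dimF W_v^d-1) + |T_d^3| - |T_d^2| = \ord_2 \T(E^d/E^{\prime d})$. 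Hence $|B_W| = |T_d| + \ord_2 \T(E^d/E^{\prime d})$, and adding the two row families gives $|B_U| + |B_W| = (|T_d|-1) + (|T_d| + \ord_2 \T(E^d/E^{\prime d})) = 2|T_d| + \ord_2 \T(E^d/E^{\prime d}) - 1$, as claimed.

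The only genuinely delicate point — and the step I would be most careful about — is the bookkeeping that converts $\sum_{v \in T_d}\dimF W_v^d$ into $|T_d| + \ord_2 \T(E^d/E^{\prime d})$: one must be sure that the "$-1$ per place" in Cassels' product formula (Theorem \ref{prodform2}) is being applied over exactly the same index set $T_d$ that indexes the rows of $B_W$, and that the places dividing $d$ but not $2\Delta$ all fall under the type $1$--$4$ classification with the valuations from Corollary \ref{cor:locconds}. Everything else is a straightforward dimension count, with the Dirichlet $S$-unit theorem supplying $\dimF U^d = |T_d|$ and the place-by-place definition of $\Psi$ supplying $\dimF V^d = 2|T_d|$ columns.
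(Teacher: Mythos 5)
Your proof is correct and follows essentially the same route as the paper's: the columns are counted place by place ($2$ per odd finite place, $3$ at $v=2$, $1$ at $\infty$, totalling $\dimF V^d = 2|T_d|$), and the rows via $|B_U| = |T_d|-1$ and $|B_W| = \sum_{v \in T_d} \dimF W_v^d$, with Cassels' product formula (Theorem \ref{prodform2}) converting $\sum_{v \in T_d}(\dimF W_v^d - 1)$ into $\ord_2 \T(E^d/E^{\prime d})$. The paper applies Theorem \ref{prodform2} directly at the last step rather than detouring through the type classification of Corollary \ref{cor:locconds}, but the content is identical, and your extra care about the index set of the product formula is exactly the right point to check.
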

\begin{proof}
We first count columns. Each place $v \in T_d$ with $v \nmid 2\infty$ gives rise to two columns, $v = \infty$ gives a single column, and $v = 2$ yields three columns for a total of $2|T_d|$.

To count rows, we see that $|B_U| = |T_d|-1$ and that $|B_W| = \sum_{v \in T_d}  \dimF W_v^d$. 
We then have \begin{equation*}|B_U| + |B_W| = |T_d | -1 + \sum_{v \in T_d}  \dimF W_v^d = 2|T_d| -1 + \sum_{v \in T_d}  \left (\dimF W_v^d -1 \right ).\end{equation*}
The result then follows from Theorem \ref{prodform2}.
\end{proof}

\subsection{Surgery}\label{subsec:surg}
We would like to shrink $\mathcal{M}$ by removing some rows and columns while modifying others in a way that does not alter the dimension of the nullspace.
After surgery, we will be able to explicity describe most of the entries of the resulting matrix $\widehat{\mathcal{M}}$.
We will describe four different types of removals. The first removal method relies on the observation that if some column $c$ is dependent on the other columns of $\mathcal{M}$, then we may remove the column $c$ without affecting the dimension of the nullspace. The other three methods are premised on the observation that if a column $c$ has weight one, supported only on a row $r$, then any left nullvector of $\mathcal{M}$ must not contain $r$. We are therefore able to remove both the row $r$ and column $c$ from $\mathcal{M}$ without affecting the dimension of the nullspace.

\subsubsection{Dependent Column Removal}\label{subsubsec:depcolremove}

\begin{lemma}
There is a column $\overline{\ord_v}$ for some $v \mid 2\Delta\infty$ with $\overline{\ord_v}(\Delta) = 0$ such that $\overline{\ord_v}$ is dependent on the other elements on $\Psi$.
\end{lemma}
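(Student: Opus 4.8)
The plan is to exhibit a nontrivial linear dependence among the characters in $\Psi$ which, after accounting for the already-identified structural dependence coming from $\chi$, forces one of the valuation characters $\overline{\ord_v}$ with $v \mid 2\Delta\infty$ and $\overline{\ord_v}(\Delta)=0$ to be expressible in terms of the others. The starting point is the product formula for the Hilbert symbol, exactly as in the proof of Proposition \ref{prop:rightnullvector}: for any fixed $t \in \Q^\times/(\Q^\times)^2$, the character $x \mapsto \sum_{v \in T_d}(x,t)_v$ on $V^d$ vanishes identically on the image of $\Q^\times/(\Q^\times)^2$, and in fact can be written as an $\FF_2$-linear combination of the characters in $\Psi$. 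Applying this with $t = \Delta$ (rather than $\Delta^\prime$) gives a relation involving the characters in $\Psi$; I would then compare it with the relation coming from $t = \Delta^\prime$ to extract information specifically about the valuation characters at places dividing $2\Delta\infty$.

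Concretely, the Hilbert symbol $(x,t)_v$ at a place $v \nmid 2\infty$ where $t$ is a unit is $\overline{\ord_v}(x)\cdot\legendre{t}{v}$, i.e.\ it is (a scalar multiple of) the valuation character $\overline{\ord_v}$; at the twisted primes $p \mid d$, which are coprime to $2\Delta$ and hence where $\Delta$ and $\Delta^\prime$ are units, the contribution of $(x,\Delta)_p$ to the relation is precisely $\legendrea{\Delta}{p}\,\overline{\ord_p}$ and already lies in the span of $\Psi$ by construction. The key point is that the places $p \mid d$ do not contribute any \emph{new} characters beyond valuation characters, so the relation $\sum_{v \in T_d}(x,\Delta)_v = 0$, after subtracting off the $\overline{\ord_p}$-terms for $p \mid d$, becomes a relation purely among the characters supported at $v \mid 2\Delta\infty$, and it is nontrivial: for instance at a place $v \mid \Delta$ the pairing with $\Delta$ is detected by $\overline{\ord_v}$, and since $E$ has a single rational $2$-torsion point, $\Delta$ is non-square, so there is a $v \mid 2\Delta\infty$ at which $\overline{\ord_v}(\Delta) \ne 0$ contributing a genuine $\overline{\ord_v}$ term. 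Combining the relation just obtained with the one from $t = \Delta^\prime$, the difference is a relation $\sum_v \big((x,\Delta)_v - (x,\Delta^\prime)_v\big) = \sum_v (x, \Delta\Delta^\prime)_v$; choosing the bookkeeping carefully, one of these two relations must visibly involve some $\overline{\ord_v}$ with $\overline{\ord_v}(\Delta)=0$, $v \mid 2\Delta\infty$, with coefficient $1$, and solving that relation for $\overline{\ord_v}$ gives the claim.

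The main obstacle I anticipate is not the existence of a dependence — the product formula guarantees it — but the bookkeeping needed to guarantee that the particular valuation character one solves for is attached to a place $v \mid 2\Delta\infty$ with $\overline{\ord_v}(\Delta)=0$, as opposed to the place $v$ used to build $B_U$ (where $\overline{\ord_v}(\Delta) \ne 0$) or a twisted prime $p \mid d$. This requires knowing that at least two of the valuation characters appearing in the relation sit over $2\Delta\infty$: one will be the special place with $\overline{\ord_v}(\Delta) \ne 0$, but there must be another. This should follow from a parity/counting argument: the number of places $v \mid 2\Delta\infty$ with $(x,\Delta)_v$ ``ramified'' has the right parity forced by the product formula and by the assumption that $E$ has exactly one rational point of order two (so $\Delta$ and $\Delta'$ are both non-square and independent), together with the explicit description of $\Delta = (A^2-4B)(\Q^\times)^2$ and $\Delta' = B(\Q^\times)^2$ from the preceding remark. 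Once the relevant place is pinned down, rewriting the relation to isolate $\overline{\ord_v}$ is immediate.
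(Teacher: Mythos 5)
Your starting point --- the structural relation coming from the Hilbert product formula, as in Proposition \ref{prop:rightnullvector} --- is the same as the paper's, but there are two genuine gaps. First, the relation you propose for $t=\Delta$ (and hence also the one for $t=\Delta\Delta^\prime$) is not a dependency among the columns of $\mathcal{M}$. A column dependency must annihilate every row of $\mathcal{M}$, and the rows include $B_W$, whose elements are local rather than global classes; the product formula only handles the rows in $B_U$. The character $\sum_{v}(\cdot,\Delta^\prime)_v$ kills the $B_W$ rows because $\Delta^\prime$ lies in the image of $\kappa_v^\prime$ at every place, and that image is the annihilator of $W_v^d$ under the Hilbert pairing. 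By contrast $\Delta$ lies in the image of $\kappa_v$, which need not be isotropic: for instance at a prime $p\mid d$ of type $3$ one has $W_p^d=\Q_p^\times/(\Q_p^\times)^2$ while $(\cdot,\Delta)_p=\overline{\ord_p}$ is nontrivial there. (Note that \emph{every} character on $V^d$ is an $\FF_2$-combination of elements of $\Psi$, since $\Psi$ is a basis of the dual of $V^d$; expressibility in $\Psi$ is not the issue --- vanishing on the rows is.) Consequently your plan of picking whichever of the two relations has the more convenient bookkeeping is unavailable: only $t=\Delta^\prime$ gives a legitimate column dependency, and the whole argument must be run with it alone.

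Second, the ``parity/counting argument'' you defer to is exactly the crux, and the input you cite for it is wrong. The hypothesis $E(\Q)[2]\simeq\Zt$ gives only that $\Delta$ is a non-square; it says nothing about $\Delta^\prime$ or $\Delta\Delta^\prime$. What the paper actually uses is the hypothesis that $E$ has no cyclic $4$-isogeny defined over $\Q(E[2])$, which by Lemma 4.2 of \cite{K} forces $\Delta^\prime$ and $\Delta\Delta^\prime$ to be non-squares; from this one extracts a place $v\mid 2\Delta\infty$ with $\overline{\ord_v}(\Delta^\prime)=1$ and $\overline{\ord_v}(\Delta)=0$, and at such a place the dependency $\sum_v(\cdot,\Delta^\prime)_v$ necessarily involves the column $\overline{\ord_v}$. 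You never invoke this hypothesis, and without it the conclusion is in real jeopardy: if $\Delta^\prime$ were a square, the character $\chi$ would be trivial and would supply no dependency at all.
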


\begin{proof}
By Lemma 4.2 in \cite{K}, the assumption that $E$ does not have a cyclic 4-isogeny defined over $\QQ(E[2])$ ensures that neither $\Delta^\prime$ nor $\Delta\Delta^\prime$ is a square in $\Q^\times$. As a result, there exists some $v \mid 2\Delta\infty$ such that $\overline{\ord_v}(\Delta^\prime) = 1$ and $\overline{\ord_v}(\Delta) = 0$.

By Proposition \ref{prop:rightnullvector}, the character $\chi(x) = \sum_{v \in T_d} (x,\Delta^\prime)_v$ yields a dependency among the characters in $\Psi$. Since $\overline{\ord_v}(\Delta) = 1$, any attempt to write $(\cdot,\Delta^\prime)_v$ as a combination of elements of $\Psi$ must contain $\overline{\ord_v}$. As a result, the dependency includes $\overline{\ord_v}$.
\end{proof}

Since the column $\overline{\ord_v}$ is dependent on the other columns of $\mathcal{M}$, we may remove it without affecting the dimension of the nullspace.

\subsubsection{Special Column Removal}\label{subsubsec:specremove}

Our choice of $B_U$ ensures that there is some $v \in T$ such that $\overline{\ord_v}(B_U)=0$. In this instance, we have $\overline{\ord_v}(\Delta) = 1$ and the column $\overline{\ord_v}$ therefore has weight one supported on $w_v$ for some $w_v \in B_W$. We may therefore remove the column $\overline{\ord_v}$ and the row $w_v$ from $\mathcal{M}$ without affecting the dimension of the nullspace or any other entry in $\mathcal{M}$.

\subsubsection{Removal of Valuation Columns}\label{subsubsec:valcols}


We assume that have already performed the special removal described in Section \ref{subsubsec:specremove}. For any place $v$, we observe that the column $\overline{\ord_v}$ in $\mathcal{M}$ has weight one or two. In the event that $\overline{\ord_v}$ has weight one, the only row incident on $\overline{\ord_v}$ is the row $r_v \in B_U$ coming from $v \in T_d$. We are therfore able to remove both the column $\overline{\ord_v}$ and the row $r_v$ without affecting the dimension of the nullspace.


In the event that the column $\overline{\ord_v}$ has weight two, we observe that one of the rows is $r_v \in B_U$ and the other row is some $w_v \in B_W$ that restricts to a non-trivial basis element in $W_v^d$. If we add $w_v$ to $r_v$, then the column $\overline{\ord_v}$ will have weight one, supported only on $w_v$. We are then able remove the row $w_v$ and the column $\overline{\ord_v}$ as in the previous paragraph.

The result of this surgery is summarized by the following lemma.

\begin{lemma}\label{lem:surgval}\text{ }
\begin{enumerate}[(i)]
\item If $\overline{\ord_v}$ is trivial on $W_v^d$, then surgery to remove the column $\overline{\ord_v}$ removes the column $\overline{\ord_v}$ and the row $r_v \in B_U$, but does not otherwise alter $\mathcal{M}$.
\item If $\overline{\ord_v}$ is non-trivial on $W_v^d$, then surgery to remove the column $\overline{\ord_v}$ removes the column $\overline{\ord_v}$ and a row $w_v \in B_W$ with $\overline{\ord_v}(w_v) \ne 0$. It also replaces the row $r_v \in B_U$ with the row $r_v + w_v$. If $v \nmid \infty$, this may change the value of $r_v$ in the column $\chi_v$ (or $\chi_2^\prime$ if $p=2$) but does not otherwise alter $r_v$.
\end{enumerate}
\end{lemma}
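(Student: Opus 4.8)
The plan is to verify directly that the surgery of Section~\ref{subsubsec:valcols} has the stated effect, by pinning down the support of the column $\overline{\ord_v}$ in the matrix obtained from $\mathcal{M}$ after the dependent-column and special-column removals. First I would record which rows meet that column. Every basis element of $B_U$ is either $-1$ or a rational prime, and $\overline{\ord_v}$ vanishes on all of these except on the basis element $r_v$ attached to the place $v$ itself (with $\overline{\ord_\infty}(-1)=1$ when $v\mid\infty$); moreover $r_v$ is still present in the matrix, because the hypothesis that $\overline{\ord_v}$ has survived as a column forces $v$ to differ from the place $v_0$ used in the special-column removal, and the valuation surgeries already carried out at other places never delete $r_v$. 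On the other side, each basis element of $B_W$ is supported, as an element of $V^d$, at a single place, so $\overline{\ord_v}$ can be nonzero on it only if it belongs to the chosen basis of $W_v^d$; and that basis was chosen precisely so that at most one of its members $w_v$ has $\overline{\ord_v}(w_v)\ne 0$, such a $w_v$ existing exactly when $\overline{\ord_v}$ is nontrivial on $W_v^d$. (The rows of $B_W$ are only ever deleted, never altered, by the earlier surgery, so this description is still accurate at this stage.) Hence the column $\overline{\ord_v}$ has weight one, supported on $r_v$, in case~(i), and weight two, supported on $\{r_v,w_v\}$ with both entries equal to $1$, in case~(ii).

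For part~(i) I would then invoke the weight-one principle used throughout Section~\ref{subsec:surg}: a column of weight one forces every left nullvector to vanish on its unique supporting row, so deleting the column $\overline{\ord_v}$ together with the row $r_v$ preserves the dimension of the left nullspace, and a pure deletion touches no other entry. For part~(ii) I would first replace $r_v$ by $r_v+w_v$; this elementary row operation leaves every other row and the full column set in place and preserves the nullspace dimension, while it turns the $r_v$-entry of the column $\overline{\ord_v}$ from $1$ into $1+1=0$. The column $\overline{\ord_v}$ now has weight one, supported on $w_v$, so, again by the weight-one principle, removing it along with the row $w_v$ preserves the nullspace dimension; the net change to the matrix is exactly the deletion of that column and that row together with the replacement of $r_v$ by $r_v+w_v$. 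Since $w_v$ is supported at the single place $v$, the rows $r_v$ and $r_v+w_v$ agree outside the columns attached to $v$: when $v\mid\infty$ the only such column is $\overline{\ord_\infty}$, which is removed; when $v=p$ is an odd prime they are $\overline{\ord_p}$ and $\chi_p$, and $\overline{\ord_p}$ is removed; and when $v=2$ they are $\overline{\ord_2},\chi_2,\chi_2'$, with $\overline{\ord_2}$ removed.

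The one clause that is more than bookkeeping is that, at $v=2$, the surviving modification of $r_2$ affects $\chi_2'$ but not $\chi_2$, and I expect this to be the main obstacle. I would settle it by using the freedom that remains in choosing the distinguished basis element $w_2$ of $W_2^d$ (its basis is constrained only to have $\overline{\ord_2}$ nontrivial on at most one member) to arrange that $\chi_2(w_2)=0$; concretely, a short computation with the explicit description of $W_2^d\subset\Q_2^\times/(\Q_2^\times)^2$ identifies the line of odd-valuation classes in $W_2^d$ and exhibits a representative on which $\chi_2$ vanishes, adjusting if necessary which of the two mod-$8$ characters is labelled $\chi_2$ and which $\chi_2'$. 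Beyond that single local check, the argument reduces entirely to tracking the supports of the characters in $\Psi$.
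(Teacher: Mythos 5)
Your argument is essentially the paper's own: after the dependent-column and special-column removals, the column $\overline{\ord_v}$ meets only $r_v$ among the $B_U$ rows and at most one $w_v$ among the $B_W$ rows (by the choice of basis for $W_v^d$), so it has weight one or two, and the weight-one removal principle --- applied after adding $w_v$ to $r_v$ in the weight-two case --- yields both parts, with the locality of the change to $r_v$ following from $w_v$ being supported at the single place $v$. The only point where you go beyond the paper is the clause singling out $\chi_2^\prime$ rather than $\chi_2$ at $v=2$, which the paper asserts without justification; your proposed normalization of $w_2$ (or relabeling of the two mod-$8$ characters) is a reasonable repair, and the distinction is in any case immaterial downstream, since the affected entries lie in the block $A_{1,1}$, for which Lemma \ref{lem:A11} claims only dependence on the class of $d$ in $\prod_{v \mid 2\Delta\infty} \Q_v^\times/(\Q_v^\times)^2$.
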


\begin{remark}\label{rem:surgrem}
In the event that $\overline{\ord_p}$ is non-trivial on $W_p^d$ and $p \mid d$, then the value $\chi_p(p)$ in the column $\chi_p$ of the row $r_p$ is replaced by $\chi_p(p)$ + $\chi_p\left(d(A+2\sqrt{B}) \right)=  \legendrea{\frac{d}{p}(A + 2\sqrt{B})}{p}$.
\end{remark}

\subsubsection{Removal of Rows From $B_W$}\label{subsubsec:otherrows}

We are also able to remove any row $w \in B_W$ from $\mathcal{M}$. Suppose that $w_p$ is a row of $\mathcal{M}$ corresponding to an element of $B_W$ that projects non-trivially into $W_p^d$. 
For simplicity, assume that $\overline{\ord_p}(w_p) = 0$. If $p \ne 2$, this means that $w_p$ will be incident only on the column $\chi_p$. If $p = 2$,  $w_p$ may only be incident on $\chi_2$ and $\chi_2^\prime$. We treat these two cases separately.

Suppose that $p \ne 2$. While the column $\chi_p$ is trivial on all $w \in B_W$ that project trivially into $W_p^d$, unlike the column $\overline{\ord_p}$, $\chi_p$ may have entries on multiple rows in $B_U$. In the event that $\chi_p$ has weight one, then we may simply remove the row $w_p$ and the column $\chi_p$ without changing the dimension of the nullspace. Otherwise, we first need to add $w_p$ to each row $r \in B_U$ for which $\chi_p(r) \ne 0$ before we may remove the row $w_p$ and the column $\chi_p$. However, the row $w_p$ is supported entirely on the column $\chi_p$, so removing the row $w_p$ and column $\chi_p$ does not alter $\mathcal{M}$ beyond the removal of this row and column.

The story is similar if $p = 2$.  If $w_p$ is supported on at most one of $\chi_2$ and $\chi_2^\prime$, then we proceed exactly as in the case of $p \ne 2$ with the same results. Otherwise, we first need to choose which column to eliminate. For concreteness, we choose to eliminate $\chi_2^\prime$. To do so, we need to add $w_p$ to each row $r \in B_U$ for which $\chi_2^\prime(r) \ne 0$. Unlike the case where $p \ne 2$, the row $w_p$ has entries in two different columns. This results in altering the entry in column $\chi_2$ for each $r \in B_U$ for which $\chi_2^\prime(r) \ne 0$.

The result of this surgery is summarized by the following lemma.

\begin{lemma}\label{lem:surgchi}\text{ }
\begin{enumerate}[(i)]
\item If $p \ne 2$ and $w_p$ is supported entirely on $\chi_p$, then surgery to remove $w_p$ removes the row $w_p$ and the column $\chi_p$ but does not otherwise alter $\mathcal{M}$.
\item If $p = 2$ and the support of $w_p$ is contained in $\{\chi_2,\chi_2^\prime\}$, then surgery to remove $w_p$ removes the row $w_p$, one of $\chi_2$ and $\chi_2^\prime$, and does not otherwise alter the matrix outside of whichever of the columns $\chi_2$ and $\chi_2^\prime$ was not removed.
\end{enumerate}
\end{lemma}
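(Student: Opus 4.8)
The plan is to recast the informal description preceding the statement as a clean application of the two facts recorded at the start of Section \ref{subsec:surg}: first, that adding one row of a matrix over $\FF_2$ to another is invertible and hence preserves the dimension of the left nullspace; and second, that if a column has weight one, supported on a single row $r$, then deleting that column together with $r$ preserves the dimension of the left nullspace, since any left nullvector of $\mathcal{M}$ must vanish in the coordinate corresponding to $r$. I would state these two observations explicitly and deduce both parts of the lemma from them, the only real work being to track which entries of $\mathcal{M}$ are disturbed along the way. One preliminary point: since the characters in $\Psi$ cut out the zero subspace of $V^d$, the nonzero vector $w_p$ has a nonzero row in $\mathcal{M}$, so the support hypothesis forces $\chi_p(w_p) = 1$ (resp. forces $w_p$ to be nonzero in one of $\chi_2,\chi_2^\prime$); in particular the case divisions below are exhaustive.

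For part (i), let $p \ne 2$ and suppose $w_p$ is supported entirely on the column $\chi_p$. If $\chi_p$ has weight one it is supported only on $w_p$, and the second observation removes the row $w_p$ and the column $\chi_p$ with no other change. Otherwise, for each row $r \ne w_p$ of $\mathcal{M}$ with $\chi_p(r) = 1$, replace $r$ by $r + w_p$; by the first observation this preserves the nullspace dimension, and since $w_p$ is nonzero only in the column $\chi_p$, each such $r$ changes only in that single column. After these row additions $\chi_p$ has weight one, supported on $w_p$, and the second observation removes $w_p$ and $\chi_p$. Every entry touched in the process lay in the column $\chi_p$, which is now deleted, so the surviving matrix differs from $\mathcal{M}$ only by the removal of the row $w_p$ and the column $\chi_p$.

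Part (ii) is identical in structure. If $w_2$ is supported on at most one of $\chi_2,\chi_2^\prime$, we are exactly in the situation of part (i) with that column in the role of $\chi_p$. The only genuinely new case is when $w_2$ is nonzero in both columns; fixing the convention of eliminating $\chi_2^\prime$, if $\chi_2^\prime$ has weight one we delete it and $w_2$ directly, and otherwise, for each row $r \ne w_2$ with $\chi_2^\prime(r) = 1$ we replace $r$ by $r + w_2$ --- which preserves the nullspace dimension but can now alter $r$ in \emph{both} $\chi_2$ and $\chi_2^\prime$ --- and then delete the weight-one column $\chi_2^\prime$ together with $w_2$. Every entry disturbed lay in the deleted column $\chi_2^\prime$ or in $\chi_2$, so the surviving matrix agrees with $\mathcal{M}$ outside the removal of $w_2$ and $\chi_2^\prime$ and, possibly, some altered entries in the surviving column $\chi_2$. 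The step demanding care --- bookkeeping rather than a real obstacle --- is exactly this final check that nothing outside the eliminated column is permanently changed: in part (i) it is immediate from the hypothesis, and in part (ii) it rests on $w_2$ meeting no column beyond $\chi_2$ and $\chi_2^\prime$, one of which is then discarded.
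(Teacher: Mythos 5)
Your argument is correct and follows the same approach as the paper's own discussion in the paragraphs preceding the lemma: zero out the target column by row additions of $w_p$, observe that the only entries disturbed lie in the column(s) where $w_p$ itself is nonzero, then delete the resulting weight-one column together with $w_p$. The preliminary remark that $w_p$'s row is necessarily nonzero (so $\chi_p(w_p)=1$ in part (i)) is a small but genuine sharpening that the paper leaves implicit.
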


\subsection{The Matrix $\widehat{\mathcal{M}}$}\label{subsec:mathatm}

We now use surgery to remove a subset of the rows  and columns of $\mathcal{M}$. We first perform the dependent column removal described in Section \ref{subsubsec:depcolremove} and then proceed with the special column removal described in Section \ref{subsubsec:specremove}. We next remove the column $\overline{\ord_v}$ for each $v \in T_d$ that was not affected by special removal or dependent column removal as described in Section \ref{subsubsec:valcols}. Finally, we remove all rows in $B_W$ as described in Section \ref{subsubsec:otherrows}. We call the resulting matrix $\widehat{\mathcal{M}}$.

\begin{proposition}\label{prop:dimsM}
The number of rows of $\widehat{\mathcal{M}}$ minus the number of columns of $\widehat{\mathcal{M}}$ is equal to $\ord_2 T(E^d/E^{\prime d})$.
\end{proposition}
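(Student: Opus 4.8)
The plan is to track how each of the four surgery operations changes the quantity (rows minus columns), starting from the count in Lemma~\ref{lem:rowcolcount}, where $\mathcal{M}$ has $2|T_d| + \ord_2\T(E^d/E^{\prime d}) - 1$ rows and $2|T_d|$ columns, for an initial difference of $\ord_2\T(E^d/E^{\prime d}) - 1$. Each surgery step either (a) removes one column only, or (b) removes one row and one column simultaneously. Operations of type (b) leave the difference (rows minus columns) unchanged, so the only net change comes from the single type-(a) operation. I would therefore just need to verify that the dependent column removal of Section~\ref{subsubsec:depcolremove} is the unique step that removes a column without removing a row, and that every other step is balanced.

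First I would note that the special column removal (Section~\ref{subsubsec:specremove}) removes the column $\overline{\ord_v}$ together with the row $w_v \in B_W$, hence is balanced. Next, the removal of each remaining valuation column $\overline{\ord_v}$ is governed by Lemma~\ref{lem:surgval}: in case (i) we remove the column and the row $r_v \in B_U$, and in case (ii) we remove the column and a row $w_v \in B_W$; either way one row and one column are removed, so each such step is balanced. Finally, the removal of every remaining row of $B_W$ is governed by Lemma~\ref{lem:surgchi}: in both case (i) and case (ii) exactly one row ($w_p$) and exactly one column (either $\chi_p$, or one of $\chi_2, \chi_2'$) are deleted, so each of these steps is balanced as well. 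Thus all of these operations preserve rows minus columns.

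The only unbalanced operation is the dependent column removal of Section~\ref{subsubsec:depcolremove}, which deletes a single column $\overline{\ord_v}$ (with $v \mid 2\Delta\infty$ and $\overline{\ord_v}(\Delta) = 0$) and no row, increasing the difference rows-minus-columns by exactly $1$. Combining, the difference for $\widehat{\mathcal{M}}$ is $\bigl(\ord_2\T(E^d/E^{\prime d}) - 1\bigr) + 1 = \ord_2\T(E^d/E^{\prime d})$, as claimed.

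The one point requiring a little care — the main obstacle, such as it is — is making sure the surgery steps are genuinely disjoint in what they remove, so that the bookkeeping is consistent: one must check that the column removed by dependent column removal, the column removed by special column removal, the valuation columns $\overline{\ord_v}$ for the remaining $v \in T_d$, and the columns $\chi_p$ (and $\chi_2,\chi_2'$) used to eliminate rows of $B_W$ are all distinct, and likewise that no row is slated for removal twice. This follows from the descriptions in Section~\ref{subsec:mathatm}: dependent and special removal each consume one specific $\overline{\ord_v}$ with $v \mid 2\Delta\infty$ (distinct since one has $\overline{\ord_v}(\Delta)=0$ and the other $\overline{\ord_v}(\Delta)=1$), the valuation-column step handles the $\overline{\ord_v}$ not yet touched, and the $B_W$-removal step consumes only $\chi$-type columns; the rows consumed are a mix of elements of $B_U$ (at most one per place) and elements of $B_W$ (each projecting nontrivially into a single $W_v^d$), with no overlap. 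Given this disjointness, the count above is exact.
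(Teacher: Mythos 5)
Your proposal is correct and follows essentially the same argument as the paper: start from the row/column count of Lemma~\ref{lem:rowcolcount}, observe that the dependent column removal is the unique unbalanced step (removing a column but no row), and that every other surgery step removes exactly one row and one column. The paper's proof is a two-sentence version of this; your added care about the disjointness of the removals is a reasonable elaboration but not a different method.
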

\begin{proof}
With the exception of dependent column removal which only removes a column, the surgery process removes a row from $\mathcal{M}$ any time it removes a column and vice-versa. The result then follows from Lemma \ref{lem:rowcolcount}.
\end{proof}

Every row in the matrix $\widehat{\mathcal{M}}$ comes from some $p \in B_U$.  We denote the set of $p \in B_U$ which have an associated row in $\widehat{\mathcal{M}}$ by $\widehat{B_U}$ and let $\widehat{B_{U,\text{Orig}}} = \widehat{B_U} \cap B_{U,\text{Orig}}$. 
Every column in $\widehat{\mathcal{M}}$ comes from a character $\chi_p$ for $p \in T_d$ (to include $\chi_2^\prime$). We denote the set of characters $\chi_p$ which have an associated row in $\widehat{\mathcal{M}}$ by $\widehat{\Psi}$ and let $\widehat{\Psi_\text{Orig}}=\widehat{\Psi} \cap \Psi_\text{Orig}$. 
%
%
We now analyze when there is a row or column in $\widehat{\mathcal{M}}$ associated with a prime $p \mid d$ and what the values in that row or column are.
%
%
\begin{proposition}\label{prop:matstruct}
Suppose that $p \mid d$.
\begin{enumerate}[(i)]
\item If $\left (\frac{\Delta}{p} \right) = \left(\frac{\Delta^\prime}{p}\right ) = 1$ (i.e. $p$ is of type  $1$), then $p \in \widehat{B_U}$ and $\chi_p \in \widehat{\Psi}$.
\item If $\left (\frac{\Delta}{p} \right) = 1$ and $\left(\frac{\Delta^\prime}{p}\right ) = -1$  (i.e. $p$ is of type  $2$),  then $p \not \in \widehat{B_U}$ and $\chi_p \in \widehat{\Psi}$.
\item If $\left (\frac{\Delta}{p} \right) = -1$ and $\left(\frac{\Delta^\prime}{p}\right ) = 1$ (i.e. $p$ is of type  $3$), then $p \in \widehat{B_U}$ and $\chi_p \not \in \widehat{\Psi}$.
\item If $\left (\frac{\Delta}{p} \right) = \left(\frac{\Delta^\prime}{p}\right ) = -1$ (i.e. $p$ is of type  $4$), then $p \not \in \widehat{B_U}$ and $\chi_p \not \in \widehat{\Psi}$.
\end{enumerate}
%
%
In the event that $\chi_p \in  \widehat{\Psi}$, then for $q \in \widehat{B_U}$, the value of $\widehat{\mathcal{M}}_{q, \chi_p}$ is given by $$\widehat{\mathcal{M}}_{q, \chi_p} = \left \{ \begin{array}{cl}\legendrea{q}{p}  & \text{if  } p \ne q \\  \legendrea{\frac{d}{p}(A + 2\sqrt{B})}{p} & \text{if } p = q  \end{array} \right .$$

In the event that $p \in \widehat{B_U}$, then for $\psi \in \widehat{\Psi}$, the value of $\widehat{\mathcal{M}}_{p, \psi}$ is given by $$\widehat{\mathcal{M}}_{p, \psi} = \left \{ \begin{array}{cl} \psi(p) & \text{ if } \psi \in \widehat{\Psi_\text{Orig}} \\ \legendrea{p}{q}  & \text{if  } \psi = \chi_q \text{ for } q \mid \text{ with } q \ne p \\  \legendrea{\frac{d}{p}(A + 2\sqrt{B})}{p} & \text{if }\psi = \chi_p \end{array} \right .$$

%
%
\end{proposition}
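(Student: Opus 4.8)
The plan is to walk through the surgery of Section~\ref{subsec:mathatm} one step at a time, following only the row $r_p\in B_U$ and the columns $\overline{\ord_p},\chi_p$ attached to a prime $p\mid d$, and to organize the bookkeeping by the type of $p$. The input I would prepare first is the local picture at such a $p$: since $(d,2\Delta)=1$ the class $\Delta$ is a $p$-adic unit, so $\overline{\ord_p}(\Delta)=0$, while $A+2\sqrt B$ is a $p$-adic unit whenever $\sqrt B\in\Q_p$ (because $(A+2\sqrt B)(A-2\sqrt B)=A^2-4B$ is prime to $p$), so $\overline{\ord_p}\big(d(A+2\sqrt B)\big)=\ord_p(d)=1$. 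Combining this with Corollary~\ref{cor:locconds}, for each type I can name which basis vectors span $W_p^d$ in the choice of Section~\ref{subsubsec:rowsMphi} and which of them $\overline{\ord_p}$ is nontrivial on: type~$1$ gives $W_p^d=\langle d(A+2\sqrt B)\rangle$, nontrivial on its basis vector; type~$2$ gives $W_p^d=1$; type~$3$ gives the two-dimensional $W_p^d$ with chosen basis $\{\Delta,\,d(A+2\sqrt B)\}$, nontrivial for $\overline{\ord_p}$ only on the second; type~$4$ gives the one-dimensional $\Z_p^\times/(\Z_p^\times)^2$, on which $\overline{\ord_p}$ is trivial.

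For the membership assertions (i)--(iv) I would feed this into Lemma~\ref{lem:surgval}. For types $2$ and $4$, $\overline{\ord_p}$ is trivial on $W_p^d$, so case~(i) of that lemma deletes $r_p$ together with the column $\overline{\ord_p}$ and alters nothing else, so $p\notin\widehat{B_U}$. For types $1$ and $3$, $\overline{\ord_p}$ is nontrivial on $W_p^d$, so case~(ii) deletes the $B_W$-row $d(A+2\sqrt B)$, keeps $r_p$, and replaces only $r_p$'s $\chi_p$-entry by $\chi_p(p)+\chi_p\big(d(A+2\sqrt B)\big)=\legendrea{\frac{d}{p}(A+2\sqrt B)}{p}$ in the manner of Remark~\ref{rem:surgrem}, so $p\in\widehat{B_U}$. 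For the columns, the observation is that after this valuation surgery the only rows of $B_W$ still meeting $\chi_p$ are the surviving basis vectors of $W_p^d$ of trivial valuation --- none for types $1$ and $2$, exactly one for types $3$ (the vector $\Delta$) and $4$ (a unit non-residue) --- and since $p\neq2$ each such row is supported on the single column $\chi_p$, so Lemma~\ref{lem:surgchi}(i) removes it along with $\chi_p$ without touching the rest of $\mathcal{M}$. Hence $\chi_p\in\widehat\Psi$ exactly for types $1$ and $2$; assembling these with the row statement gives the four-way table.

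For the entry formulas I would start from the fact that every entry of $\mathcal{M}$ is a character value $\psi(b)$, so $\chi_p(q)=\legendrea{q}{p}$ for $q$ a $p$-adic unit (in particular $q\neq p$), $\chi_p(p)=\legendrea{1}{p}=0$, and $\psi(p)$ for $\psi\in\Psi_\text{Orig}$ depends only on the image of $p$ at the corresponding place of $2\Delta\infty$. It then remains to check that the surgery leaves every surviving entry in a row $r_p$ ($p$ of type $1,3$) or a column $\chi_p$ ($p$ of type $1,2$) untouched apart from the single change to $\widehat{\mathcal{M}}_{p,\chi_p}$ recorded above; this follows because, by Lemmas~\ref{lem:surgval} and~\ref{lem:surgchi}, a valuation removal disturbs only the deleted pair $(r_v,\chi_v)$ and a $B_W$-removal disturbs only the column it deletes, and no column $\chi_p$ with $p\mid d$ is disturbed except when it is itself deleted. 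I expect the one genuine wrinkle to be the bookkeeping at the prime $2$: Lemma~\ref{lem:surgchi}(ii) permits a $B_W$-removal there to alter the retained member of $\{\chi_2,\chi_2^\prime\}$, so I would have to verify carefully that such an alteration stays inside the $\Psi_\text{Orig}$-columns and never migrates into a column $\chi_p$ with $p\mid d$, and that its effect on the entries $\widehat{\mathcal{M}}_{p,\chi_2}$ and $\widehat{\mathcal{M}}_{p,\chi_2^\prime}$ of a surviving row $r_p$ is consistent with the stated formula for $\psi\in\widehat{\Psi_\text{Orig}}$. The rest of the proof is a routine, if lengthy, walk through the surgery.
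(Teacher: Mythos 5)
Your proposal is correct and follows essentially the same route as the paper's proof: it reads off which rows and columns survive surgery from Lemma~\ref{lem:surgval}, Lemma~\ref{lem:surgchi}, and Corollary~\ref{cor:locconds}, and tracks that only the diagonal entry $\widehat{\mathcal{M}}_{p,\chi_p}$ is altered, per Remark~\ref{rem:surgrem}. If anything you are slightly more careful than the paper, e.g.\ in verifying via $(A+2\sqrt{B})(A-2\sqrt{B})=A^2-4B$ that $\overline{\ord_p}$ is nontrivial on $W_p^d$ for type~$1$ primes, and in flagging the possible alteration of the retained column among $\{\chi_2,\chi_2^\prime\}$, a point the paper's proof passes over silently.
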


\begin{proof}
By Lemma \ref{lem:surgchi}, the column $\chi_p$ will be removed from $\mathcal{M}$ during surgery if and only if there is some $w_p \in B_W$ supported entirely on $\chi_p$. By our choice of basis for $W_p^d$ in Section \ref{subsubsec:rowsMphi}, we see that this occurs precisely when $p$ is of type $3$ or $4$. We therefore have $\chi_p \in \widehat{\Psi}$ when $p$ is of types $1$ or $2$.

Suppose that $q \in \widehat{B_U}$. If the entry $\mathcal{M}_{q,\chi_p}$ was not altered via surgery, then we have $\widehat{\mathcal{M}}_{q,\chi_p} = \legendrea{q}{p}$. By Lemmas \ref{lem:surgchi} and \ref{lem:surgval}, we see that only instance in which the entry $\mathcal{M}_{q,\chi_p}$ is altered is when $q = p$, in which case $\widehat{\mathcal{M}_{p,\chi_p}} = \legendrea{\frac{d}{p}(A + 2\sqrt{B})}{p}$ as noted in Remark \ref{rem:surgrem}.

We next consider the rows. By Lemma \ref{lem:surgval}, a row $r_p$ coming from $p \in B_U$ will be removed from $\mathcal{M}$ during surgery exactly when $\overline{\ord_p}$ is trivial on a basis for $W_p^d$. By Corollary \ref{cor:locconds}, $r_p$ will not be removed -- and we therefore have $p \in \widehat{B_U}$ -- if $p$ is of type $1$ or type $3$.

Now suppose that $\psi \in \widehat{\Psi}$. If the entry $\mathcal{M}_{p,\psi}$ was not altered via surgery, then we have $\widehat{\mathcal{M}}_{p,\psi} = \psi(p)$, which if $\psi =\chi_q$ for $q \mid d$ with $p \ne q$ is equal to $\legendrea{q}{p}$. As above, the entry $\mathcal{M}_{p,\psi}$ is only altered when $q = p$, in which case $\widehat{\mathcal{M}_{p,\chi_p}} = \legendrea{\frac{d}{p}(A + 2\sqrt{B})}{p}$.
%
%
%
%
%
%
%
%
\end{proof}
We now sort the columns of $\widehat{\mathcal{M}}$ so that the columns in $\widehat{\Psi_\text{Orig}}$ are on the left and the columns in $\widehat{\Psi} \setminus \widehat{\Psi_\text{Orig}}$ are on the right. We further sort the columns of $\widehat{\Psi} \setminus \widehat{\Psi_\text{Orig}}$ to separate the primes of type $1$ from those of type $2$. We also sort the rows of $\widehat{\mathcal{M}}$ so that the rows in $\widehat{B_{U,\text{Orig}}}$ are on top and the rows in $\widehat{B_U} \setminus \widehat{B_{U,\text{Orig}}}$ are on the bottom. We further sort the rows in $\widehat{B_U} \setminus \widehat{B_{U,\text{Orig}}}$ to separate the primes of type $1$ from those of type $3$. The resulting matrix is shown in Figure \ref{fig:Mphi}


\begin{figure}
\captionsetup{labelformat=empty}

\begin{minipage}{\textwidth}

\vspace{0.4in}

\[ \vphantom{
    \begin{matrix}
    \overbrace{XYZ}^{\mbox{$R$}}%
     \\ \\ \\ \\ \\ \\
    \underbrace{pqr}_{\mbox{$S$}}
    \end{matrix}}%
\begin{matrix}
\vphantom{a}\\
\coolleftbrace{\widehat{B_{U,{\text{Orig }\hspace{0.1in}}}}}{\vspace{0.375in}}
\vspace{0.05in}\\
\coolleftbrace{\widehat{B_{U,{\text{Type 1}}}}}{\vspace{0.75in}}
\vspace{0.05in}\\
\coolleftbrace{\widehat{B_{U,{\text{Type 3}}}} }{\vspace{0.75in}}
\end{matrix}%
\renewcommand\arraystretch{1}
\begin{array}{c}
\begin{matrix}[ccc]
\coolover{\widehat{\Psi_\text{Orig}}}{\hspace{0.75in} \vspace{0.1in}} \hspace{0.1in}  %
&  \coolover{\widehat{\Psi_\text{Type 1}}}{\hspace{1.5in} \vspace{0.1in}}  \hspace{0.1in} & \coolover{\widehat{\Psi_\text{Type 2}}}{\hspace{1.5in} \vspace{0.1in}}\\
\end{matrix}

\vspace{-0.4in} \\

\left [
\begin{array}{p{0.75in}|p{1.5in}|p{1.5in}}
 \vspace{0.1in}{\hspace{0.15in}\Large $A_{1,1}$}  \vspace{0.2in} & \vspace{0.1in}{\hspace{0.45in}\Large $A_{1,2}$}  \vspace{0.2in} & \vspace{0.1in}{\hspace{0.45in}\Large $A_{1,3}$}  \vspace{0.2in} \\ \hline
\vspace{0.3in}{\hspace{0.15in}\Large $A_{2,1}$}  \vspace{0.35in} & \vspace{0.3in}{\hspace{0.45in}\Large $A_{2,2}$}  \vspace{0.35in} & \vspace{0.3in}{\hspace{0.45in}\Large $A_{2,3}$}  \vspace{0.35in} \\ \hline
\vspace{0.3in}{\hspace{0.15in}\Large $A_{3,1}$}  \vspace{0.35in} & \vspace{0.3in}{\hspace{0.45in}\Large $A_{3,2}$}  \vspace{0.35in} & \vspace{0.3in}{\hspace{0.45in}\Large $A_{3,3}$}  \vspace{0.35in} \\
\end{array}
\right ]
\end{array}
\]
\end{minipage}
\caption{Figure \ref{fig:Mphi}: The matrix $\widehat{\mathcal{M}}$}
\label{fig:Mphi}
\end{figure}


We now use Proposition \ref{prop:matstruct} to analyze the entries in each block of $\widehat{\mathcal{M}}$.

\begin{lemma}\label{lem:mostMhatentries} \text{ }
\begin{enumerate}[(i)]
\item The entries in $A_{1,2}$ (respectively $A_{1,3}$) are given by Legendre symbols $\legendrea{u}{p}$, where $u \in \widehat{B_{U,\text{Orig}}}$ and $p$ is a prime dividing $d$ of type $1$ (resp. type $2$).
\item The entries $A_{2,1}$  (respectively $A_{3,1}$) are given by characters $\psi(p)$, where $p$ is a prime of type $1$ (resp. type $3$) dividing $d$ and $\psi$ is either $\chi_2$, $\chi_2^\prime$, or a Legendre symbol $\legendrea{\cdot}{q}$, where $q$ is a prime dividing $\Delta$.
\item The entries in $A_{2,3}$ (respectively $A_{3,2}$, $A_{3,3}$) are of the form $\legendrea{q}{p}$ where $p$ is a prime of type $2$ (type $1$, type $2$) dividing $d$ and $q$ is a prime of type $1$ (resp. type $3$, type $3$).
\item The off-diagonal entries of the square matrix $A_{2,2}$ are of the form $\legendrea{q}{p}$ where $p \ne q$ are primes of type $1$ dividing $d$ and the diagonal entries are of the form $\legendrea{\frac{d}{p}(A + 2\sqrt{B})}{p}$ where $p$ is a prime of type $1$ dividing $d$.
\end{enumerate}
\end{lemma}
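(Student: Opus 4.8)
The plan is to read off each block's entries directly from Proposition~\ref{prop:matstruct}, using the sorting conventions just established. The matrix $\widehat{\mathcal{M}}$ has rows indexed by $\widehat{B_U}$, split into $\widehat{B_{U,\text{Orig}}}$ (rows of type ``Orig''), primes of type $1$ dividing $d$, and primes of type $3$ dividing $d$; its columns are indexed by $\widehat{\Psi}$, split into $\widehat{\Psi_\text{Orig}}$, characters $\chi_p$ for $p \mid d$ of type $1$, and characters $\chi_p$ for $p \mid d$ of type $2$. That the type-$2$ and type-$4$ primes contribute no rows, and the type-$3$ and type-$4$ primes contribute no columns, is exactly parts (i)--(iv) of Proposition~\ref{prop:matstruct}, which justifies this block decomposition in the first place.

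For the off-diagonal blocks the argument is a direct appeal to the entry formulas in Proposition~\ref{prop:matstruct}. For block $A_{1,2}$ (resp.\ $A_{1,3}$), a row $u \in \widehat{B_{U,\text{Orig}}}$ meets a column $\chi_p$ with $p \mid d$ of type $1$ (resp.\ type $2$); since $u \ne p$ (as $u$ is a prime dividing $2\Delta\infty$ or $-1$, while $p \mid d$ and $(d,2\Delta)=1$), the formula for $\widehat{\mathcal{M}}_{q,\chi_p}$ with $q \ne p$ gives $\legendrea{u}{p}$, proving (i). For block $A_{2,1}$ (resp.\ $A_{3,1}$), a row $p \mid d$ of type $1$ (resp.\ type $3$) meets a column $\psi \in \widehat{\Psi_\text{Orig}}$, so the formula for $\widehat{\mathcal{M}}_{p,\psi}$ in the case $\psi \in \widehat{\Psi_\text{Orig}}$ gives $\psi(p)$, and $\psi$ is one of $\chi_2$, $\chi_2^\prime$, or $\legendrea{\cdot}{q}$ for $q$ a prime of $T$ dividing $\Delta$ --- these are the only characters in $\Psi_\text{Orig}$ of the relevant form --- which is (ii). For blocks $A_{2,3}$, $A_{3,2}$, $A_{3,3}$, a row $p \mid d$ (of type $1$ or $3$) meets a column $\chi_q$ for $q \mid d$ of type $2$, $3$, $3$ respectively; again $p \ne q$ since distinct prime divisors of $d$ are involved (in $A_{3,2}$ and $A_{2,3}$ the row and column primes have different types hence are distinct, and in $A_{3,3}$ the row prime has type $3$ while the column prime has type $3$ but the column only exists for type-$2$ primes --- wait, this needs care), so the $q \ne p$ case applies and gives $\legendrea{q}{p}$, which is (iii).

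The one subtlety, and the step I expect to require the most care, is block $A_{3,3}$: its rows are primes of type $3$ and its columns are characters $\chi_q$ with $q$ of type $\mathbf{2}$ (since only type-$1$ and type-$2$ primes furnish columns). So a row prime $p$ (type $3$) and a column prime $q$ (type $2$) are automatically distinct, and $\widehat{\mathcal{M}}_{p,\chi_q} = \legendrea{q}{p}$; the statement of (iii) as written (``$p$ of type $2$, $q$ of type $3$'') appears to swap the roles, so I would either correct the labelling or simply note that the entry is $\legendrea{q}{p}$ with $\{p,q\}$ a pair consisting of a type-$2$ and a type-$3$ prime dividing $d$, which is symmetric enough. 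Finally, for $A_{2,2}$, which is square with both rows and columns indexed by the type-$1$ primes dividing $d$: an off-diagonal entry has row prime $p$ and column prime $q$ with $p \ne q$, giving $\legendrea{q}{p}$ by the $p \ne q$ case, while the diagonal entry has $p = q$, and the $p = q$ case of either entry formula in Proposition~\ref{prop:matstruct} gives $\legendrea{\frac{d}{p}(A + 2\sqrt{B})}{p}$; this is (iv). In each case the proof is just ``apply Proposition~\ref{prop:matstruct} and check $p \ne q$ or $p = q$ as appropriate,'' so the writeup should be short.
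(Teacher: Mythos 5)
Your strategy --- reading off each block's entries from Proposition~\ref{prop:matstruct} and the sorting conventions --- is exactly the paper's, which dispatches the lemma as an immediate consequence of that proposition. However, you apply the entry formula with the two primes reversed, and this causes you to perceive a labelling error in part (iii) that does not exist. For a row indexed by a prime $p \in \widehat{B_U}$ and a column $\chi_q$ with $q \neq p$, Proposition~\ref{prop:matstruct} gives $\widehat{\mathcal{M}}_{p,\chi_q} = \legendrea{p}{q}$: the row prime on top, the column prime on the bottom. You instead write $\legendrea{q}{p}$ --- column over row --- for these entries, and this single transposition is what generates the apparent discrepancy. In the lemma's own $\legendrea{q}{p}$ (across parts (i), (iii), and (iv)), $q$ always denotes the row prime and $p$ the column prime, so the lemma is also row-over-column and agrees with the proposition.

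Concretely for $A_{3,3}$: its rows are type-$3$ primes, and its columns are characters $\chi$ attached to type-$2$ primes. The entry therefore has the type-$3$ (row) prime in the numerator and the type-$2$ (column) prime in the denominator, which the lemma writes as $\legendrea{q}{p}$ with ``$q$ of type $3$'' and ``$p$ of type $2$.'' This is correct as stated, and your closing proposal to relabel or symmetrize should be dropped. Separately, your initial listing of the column types for $A_{2,3}$, $A_{3,2}$, $A_{3,3}$ as ``type $2,3,3$'' should read ``type $2,1,2$''; you catch the $A_{3,3}$ case in passing, but $A_{3,2}$'s columns are the type-$1$ characters, not type $3$ (which, as you correctly observe elsewhere, furnish no columns at all).
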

\begin{proof}
This is an immediate consequence of Proposition \ref{prop:matstruct}.
\end{proof}

\begin{lemma}\label{lem:A11}
The entries in $A_{1,1}$ are dependent solely on the class of $d$ in $\prod_{v \mid 2\Delta\infty} \Q_v^\times/(\Q_v^\times)^2$.
\end{lemma}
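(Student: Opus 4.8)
The claim to establish is Lemma \ref{lem:A11}: the block $A_{1,1}$, whose rows are indexed by $\widehat{B_{U,\text{Orig}}}$ and whose columns are indexed by $\widehat{\Psi_\text{Orig}}$, has entries depending only on the image of $d$ in $\prod_{v \mid 2\Delta\infty} \Q_v^\times/(\Q_v^\times)^2$. The plan is to trace through the construction of $\widehat{\mathcal{M}}$ and check that every ingredient of the block $A_{1,1}$ — which rows survive, which columns survive, and the surviving entries' values — is a function of this local image of $d$ alone.

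First I would pin down the index sets. An entry of $A_{1,1}$ is $\widehat{\mathcal{M}}_{p,\psi}$ with $p \in \widehat{B_{U,\text{Orig}}}$, i.e. $p \mid 2\Delta\infty$ and $p \in \widehat{B_U}$, and $\psi \in \widehat{\Psi_\text{Orig}}$, i.e. $\psi = \chi_v$ (or $\chi_2$, $\chi_2'$) for some $v \mid 2\Delta\infty$. The set $B_{U,\text{Orig}}$ and the set $\Psi_\text{Orig}$ are intrinsic to $E$ and do not depend on $d$ at all. What could in principle depend on $d$ is which of these rows and columns survive surgery. By Lemma \ref{lem:surgval}, whether the row $r_v \in B_U$ for $v \mid 2\Delta\infty$ is removed depends on whether $\overline{\ord_v}$ is trivial on $W_v^d$; by Proposition \ref{prop:tamcalc} the subspace $W_v^d$ for $v \mid 2\Delta\infty$ is determined by the image of $d$ in $\Q_v^\times/(\Q_v^\times)^2$, so this is a function of the local image of $d$. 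Similarly the dependent column removal of Section \ref{subsubsec:depcolremove} picks out a fixed column $\overline{\ord_v}$ (intrinsic to $E$), the special column removal picks out a column $\overline{\ord_v}$ and a row $w_v \in B_W$, and the removal of valuation columns in Section \ref{subsubsec:valcols} again only touches columns $\overline{\ord_v}$ (none of which lie in $\widehat{\Psi_\text{Orig}}$) and rows $r_v$, $w_v$; none of this removes a $\chi_v$-column or a $B_{U,\text{Orig}}$-row in a way that depends on anything beyond the local image of $d$. The removal of rows from $B_W$ in Section \ref{subsubsec:otherrows} removes $B_W$-rows, not $B_U$-rows, so the surviving $A_{1,1}$ rows are exactly the $r_p$ with $p \mid 2\Delta\infty$ for which $\overline{\ord_p}$ is nontrivial on a basis for $W_p^d$ plus those unaffected — again a function of the local $d$-data.

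Next I would check the surviving entries. By Proposition \ref{prop:matstruct}, for $p \in \widehat{B_U}$ and $\psi \in \widehat{\Psi_\text{Orig}}$ the entry is $\widehat{\mathcal{M}}_{p,\psi} = \psi(p)$ unless it was altered by surgery. Since $p$ and $\psi$ both correspond to places dividing $2\Delta\infty$, the value $\psi(p)$ is manifestly intrinsic to $E$ (indeed independent of $d$). The only surgical alterations to a row $r_p$ recorded in Lemma \ref{lem:surgval}(ii) change the entry in column $\chi_v$ (or $\chi_2'$) for the \emph{same} place $v = p$, by adding $\chi_p(w_p)$ where $w_p$ spans the $\overline{\ord_p}$-nontrivial part of $W_p^d$; and Lemma \ref{lem:surgchi} alterations from removing $B_W$-rows add $\psi(p) \cdot [\text{something}]$ where again the added quantity is a character value of a basis element of some $W_v^d$, $v \mid 2\Delta\infty$. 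In every case the correction term is a character evaluated on a basis vector of $W_v^d$ for $v \mid 2\Delta\infty$, and by Proposition \ref{prop:tamcalc} (and the explicit Kummer-image description) that basis, and hence the correction, is determined by the image of $d$ in $\prod_{v\mid 2\Delta\infty}\Q_v^\times/(\Q_v^\times)^2$. (Here one uses crucially that the primes $p \mid d$ with $(p,2\Delta)=1$ do not appear as indices of $A_{1,1}$, so the "$q \mid d$" Legendre-symbol entries of Proposition \ref{prop:matstruct}, which genuinely depend on $d$ beyond its local image, never occur in this block.)

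The main obstacle is bookkeeping rather than depth: one must be careful that the surgery operations in Sections \ref{subsubsec:depcolremove}–\ref{subsubsec:otherrows}, which do modify entries of $\widehat{\mathcal{M}}$ "outside" the removed row/column, never feed a genuinely $d$-dependent quantity (a Legendre symbol $\legendrea{\cdot}{p}$ with $p \mid d$, or the diagonal term $\legendrea{\frac{d}{p}(A+2\sqrt B)}{p}$) into the $A_{1,1}$ block. The key structural fact making this work is that every row removed from $B_W$ during surgery is supported, before the row operations, only on columns $\overline{\ord_v}$, $\chi_v$, $\chi_2$, $\chi_2'$ for the single place $v$ into which it projects; when $v \mid 2\Delta\infty$ the resulting corrections stay "local", and when $v = p \mid d$ the corrections land only in columns outside $\widehat{\Psi_\text{Orig}}$ (columns $\chi_p$ with $p\mid d$) and in rows, but the added row $w_p$ has zero entries throughout $\widehat{\Psi_\text{Orig}}$, so $A_{1,1}$ is untouched. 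Assembling these observations gives the lemma.
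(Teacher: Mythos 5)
Your proof is correct and follows essentially the same route as the paper: both reduce the claim to the fact (Proposition \ref{prop:tamcalc}) that the subspaces $W_v^d$ for $v\mid 2\Delta\infty$ depend only on the image of $d$ in $\prod_{v\mid 2\Delta\infty}\Q_v^\times/(\Q_v^\times)^2$. The paper compresses your bookkeeping into the single assertion that $A_{1,1}$ is obtained by performing surgery on the submatrix of $\mathcal{M}$ with rows $B_{U,\mathrm{Orig}}\cup B_{W,\mathrm{Orig}}$ and columns $\Psi_{\mathrm{Orig}}$; your step-by-step check that the corrections coming from primes $p\mid d$ never land in the $\widehat{\Psi_{\mathrm{Orig}}}$ columns is precisely the verification that justifies that assertion.
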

\begin{proof}
For a place $v \in T_d$, define $B_{W,v}$ to be the subset of $B_W$ that projects non-trivially into $W_v^d$ and set $B_{W,\text{Orig}} = \cup_{v \mid 2\Delta\infty} B_{W,v}$. Let $\mathcal{M}^\prime$ be the submatrix of $\mathcal{M}$ whose rows are given by the union $B_{U,\text{Orig}} \cup B_{W,\text{Orig}}$ and whose columns are given by the characters in $\Psi_\text{Orig}$. We note that the matrix $A_{1,1}$ is obtained by performing surgery (as described in Sections \ref{subsec:surg} and \ref{subsec:mathatm}) to the matrix $\mathcal{M}^\prime$ rather than the matrix $\mathcal{M}$.

We observe that the matrix $\mathcal{M}^\prime$ is dependent only on $E$ and the subspaces $W_v^d$ for $v \mid 2\Delta\infty$. By Proposition \ref{prop:tamcalc}, each $W_v^d$ is dependent only on the class of $d$ in $\Q_v^\times/(\Q_v^\times)^2$, and as a result, the dependence of $\mathcal{M}^\prime$ on $d$ is entirely dictated by the image of $d$ in $\prod_{v \mid 2\Delta\infty} \Q_v^\times/(\Q_v^\times)^2$. Since the outcome of surgery is determined solely by the input matrix, we therefore find that $A_{1,1}$ is dependent solely on the class of $d$ in $\prod_{v \mid 2\Delta\infty} \Q_v^\times/(\Q_v^\times)^2$.
\end{proof}

We then get the following result.
\begin{corollary}\label{cor:onlysymbols}
If $d = p_1p_2\cdots p_n$, then the rank of $\Sel_\phi(E^d/\QQ)$ depends only on
\begin{itemize}
\item The values of the $p_i \pmod {8\Delta}$,
\item The Legendre symbols $\legendre{p_i}{p_j}$ for those $p_i,p_j$ of types $1$, $2$, and $3$, and
\item The Legendre symbols $\legendre{A+2\sqrt{B}}{p_i}$ for those $p_i$ of type $1$.
\end{itemize}
\end{corollary}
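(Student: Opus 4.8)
The plan is to deduce this corollary directly from the structural analysis of $\widehat{\mathcal{M}}$ assembled in Lemmas \ref{lem:mostMhatentries} and \ref{lem:A11}, together with Proposition \ref{prop:dimsM}. First I would recall that, by the results of Sections \ref{sec:selcoker} through \ref{subsec:mathatm}, the rank of $\Sel_\phi(E^d/\Q)$ is determined by $\ord_2 \T(E^d/E^{\prime d})$ (which by Proposition \ref{prop:dimsM} is the difference between the number of rows and columns of $\widehat{\mathcal{M}}$) together with the dimension of the left nullspace of $\widehat{\mathcal{M}}$; indeed $\dimF \Sel_\phi(E^d/\Q) = \dimF \Sel_\phi(E^d/\Q)_0 + 1 = (\text{left nullity of } \widehat{\mathcal{M}}) + 1$. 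So it suffices to show that $\widehat{\mathcal{M}}$ itself — as a matrix, up to the row and column orderings already fixed — is determined by the listed data, since the nullity of a matrix is an invariant of the matrix.

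Next I would go block by block through Figure \ref{fig:Mphi}. The partition of the primes $p_i \mid d$ into types $1,2,3,4$ is determined by the Legendre symbols $\legendre{\Delta}{p_i}$ and $\legendre{\Delta^\prime}{p_i}$, hence by $p_i \pmod{8\Delta}$ (since $\Delta, \Delta^\prime \mid 8\Delta$ up to squares); and by Proposition \ref{prop:matstruct} this partition dictates which primes contribute rows ($\widehat{B_U}$) and which contribute columns ($\widehat{\Psi}$), as well as the row/column counts feeding Proposition \ref{prop:dimsM}. By Lemma \ref{lem:A11}, the block $A_{1,1}$ depends only on the image of $d$ in $\prod_{v \mid 2\Delta\infty}\Q_v^\times/(\Q_v^\times)^2$, which is determined by the $p_i \pmod{8\Delta}$. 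By Lemma \ref{lem:mostMhatentries}(ii), the entries of $A_{2,1}$ and $A_{3,1}$ are values $\psi(p_i)$ where $\psi$ is $\chi_2$, $\chi_2^\prime$, or $\legendrea{\cdot}{q}$ for $q \mid \Delta$ — each such value is a function of $p_i \pmod{8\Delta}$. By Lemma \ref{lem:mostMhatentries}(i),(iii), the entries of $A_{1,2}, A_{1,3}, A_{2,3}, A_{3,2}, A_{3,3}$ are all of the form $\legendrea{u}{p_i}$ or $\legendrea{p_j}{p_i}$ with $u \mid 2\Delta\infty$ and $p_i, p_j$ of type $1$, $2$, or $3$; the former are again determined by $p_i \pmod{8\Delta}$ (possibly after using quadratic reciprocity to flip $\legendre{u}{p_i}$ into a congruence condition) and the latter are exactly the symbols $\legendre{p_i}{p_j}$ appearing in the second bulleted item. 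Finally, by Lemma \ref{lem:mostMhatentries}(iv), the off-diagonal entries of $A_{2,2}$ are $\legendre{p_j}{p_i}$ for type-$1$ primes (covered by the second item), and the diagonal entries are $\legendrea{\tfrac{d}{p_i}(A+2\sqrt B)}{p_i}$; writing $\tfrac{d}{p_i} = \prod_{j \ne i} p_j$ this factors as $\legendrea{A+2\sqrt B}{p_i} + \sum_{j\ne i}\legendrea{p_j}{p_i}$, so it is determined by $\legendre{A+2\sqrt B}{p_i}$ (third item) together with the symbols $\legendre{p_j}{p_i}$ (second item).

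Assembling these observations, every entry of $\widehat{\mathcal{M}}$, and the shape of $\widehat{\mathcal{M}}$, is a function of the three listed pieces of data; hence so is its left nullity, and therefore so is $\dimF \Sel_\phi(E^d/\Q)$. The only subtlety — and the step I would be most careful about — is bookkeeping the dependence of the "mixed" blocks ($A_{1,2}, A_{1,3}, A_{2,3}$, etc., and the $A_{2,1}, A_{3,1}$ blocks) on reciprocity: an entry $\legendre{u}{p_i}$ with $u$ a fixed prime dividing $2\Delta$ must be rewritten via quadratic reciprocity as a congruence condition on $p_i \bmod 4u$ (with the usual correction term depending on $u \bmod 4$ and $p_i \bmod 4$), so that it genuinely depends only on $p_i \pmod{8\Delta}$ and not on any reciprocity symbol between $p_i$ and a prime outside $\{p_1,\dots,p_n\}$. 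Once one checks that $8\Delta$ is a large enough modulus to absorb all such corrections (it is, since $4u \mid 8\Delta$ for every prime $u \mid 2\Delta$), the corollary follows.
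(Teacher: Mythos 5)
Your argument follows exactly the paper's route (the paper's own proof of this corollary is the one-line citation of Lemmas \ref{lem:mostMhatentries} and \ref{lem:A11} together with quadratic reciprocity), and you supply the block-by-block verification that the paper leaves implicit; the reduction to the left nullity of $\widehat{\mathcal{M}}$, the use of Proposition \ref{prop:dimsM}, and the reciprocity check on the mixed blocks are all as in the paper.

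The one place you move too quickly is the diagonal of $A_{2,2}$. Expanding $\legendrea{(d/p_i)(A+2\sqrt{B})}{p_i}$ as $\legendrea{A+2\sqrt{B}}{p_i}+\sum_{j\ne i}\legendrea{p_j}{p_i}$ is fine, but that sum runs over \emph{all} $j\ne i$, including those $p_j$ of type $4$, and the symbols $\legendre{p_j}{p_i}$ with $p_j$ of type $4$ and $p_i$ of type $1$ are not covered by the second bullet as literally worded (it restricts both primes to types $1,2,3$), nor are they determined by the congruence classes mod $8\Delta$. By Proposition \ref{prop:matstruct} a type-$4$ prime contributes neither a row nor a column to $\widehat{\mathcal{M}}$, so this diagonal term is in fact the only place a type-$4$ Legendre symbol can enter; the clean fix is to take the whole entry $\legendrea{(d/p_i)(A+2\sqrt{B})}{p_i}$ as the atomic datum in the third bullet (this is effectively what the probability distribution $\mathcal{P}$ of Section \ref{sec:probapproach} does) rather than factoring it, or to enlarge the second bullet to all pairs with at least one prime of type $1$. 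This imprecision is inherited from the corollary's statement rather than introduced by your proof, and it does not affect the downstream analysis since those entries are modeled as independent uniform bits in any case.
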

\begin{proof}
This follows from Lemma \ref{lem:mostMhatentries} and Lemma \ref{lem:A11} combined with quadratic reciprocity.
\end{proof}

\section{A Probabilistic Approach}\label{sec:probapproach}

For this section, we will assume that $d$ is squarefree and that $(d,2\Delta) = 1$. For each $i$, we will let $n_i$ denote the number of prime factors of $d$ of type $i$.

Let $n = \omega(d)$ and suppose that $d = p_1p_2 \cdots p_n$. By Corollary \ref{cor:onlysymbols}, we know that $\dimF \Sel_\phi(E^d/\QQ)$ can be computed from an explicit matrix $\widehat{\mathcal{M}} = \widehat{\mathcal{M}}_d$ whose entries only depend on the values of the $p_i \pmod {8\Delta}$, the Legendre symbols $\legendre{p_i}{p_j}$ for those $p_i,p_j$ of types $1$, $2$, and $3$ , and the Legendre symbols $\legendre{A+2\sqrt{B}}{p_i}$ for those $p_i$ of type $1$.

There is a natural probability distribution $\mathcal{P}$ over possible combinations of such values. Namely, the $p_i$ take random, independent congruence classes in $(\Z/(8\Delta\Z))^\times$, the Legendre symbols $\legendre{p_i}{p_j}$ are random and independent up to the constraints imposed by quadratic reciprocity, and the Legendre symbols $\legendre{A+2\sqrt{B}}{p_i}$ for $p_i$ of type $1$ are randomly $+1$ or $-1$, each with probability $\frac{1}{2}$.

We may analyze the distribution of nullities of the matrices $\widehat{\mathcal{M}}_d$ -- and therefore of $\dimF \Sel_\phi(E^d/\QQ)$ -- as $d$ varies subject to the probability distribution $\mathcal{P}$. For the remainder of this section, we will therefore undertand the term probability to be speaking solely in terms of the probability distribution $\mathcal{P}$ and not with respect to any natural ordering on $d$.

%
We begin by defining a pair of probability distributions $\alpha_{r,u}(n)$ and $\alpha_{r,u}^\prime(n)$.
\begin{equation*}
\alpha_{r,u}(n):=\mathbb{P}\big(  \dimF\Sel_\phi(E^d)=r|d=p_1\cdots p_n, \dimF\Sel_\phi(E^d)-\dimF\Sel_\phi(E^{\prime d})=u \big)
\end{equation*}
and
\begin{equation*}
\alpha_{r,u}^\prime(n):=\mathbb{P}\left ( \begin{array}{c} \dimF\Sel_\phi(E^d)=r|d=p_1\cdots p_n, \dimF\Sel_\phi(E^d)-\dimF\Sel_\phi(E^{\prime d})=u, \\  \textrm{ and there are at least }n/10 \ p_i \textrm{ of type }i\textrm{ for each }i\end{array}  \right ).
\end{equation*}
%
%
The goal is then to prove the following theorem.
\begin{theorem}\label{thm:SDLimitThm}
For all $r,u$ we have
$$
\lim_{n\rightarrow\infty} \alpha_{r,u}(n) =  \lim_{n\rightarrow\infty} \alpha_{r,u}^\prime(n) = \alpha_{r,u}.
$$
\end{theorem}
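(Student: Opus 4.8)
The plan is to analyze the matrix $\widehat{\mathcal{M}}_d$ of Figure \ref{fig:Mphi} directly, using the block structure and the probability distribution $\mathcal{P}$, and reduce the computation of the nullity to an application of Theorem \ref{thm:randmatthm}. First I would dispense with the distinction between $\alpha_{r,u}(n)$ and $\alpha_{r,u}^\prime(n)$: conditioning on having at least $n/10$ primes of each type is an event of probability tending to $1$ (a Chernoff-type bound, since the types of the $p_i$ are governed by the independent Legendre symbols $\legendre{\Delta}{p_i}$ and $\legendre{\Delta^\prime}{p_i}$, each $\pm 1$ with probability $1/2$), so the two limits agree if either exists. So it suffices to work on the event $\mathcal{E}_n$ that $n_1,n_2,n_3,n_4 \ge n/10$, and I also condition on the exact type-counts $(n_1,n_2,n_3,n_4)$ and on the values of the $p_i \bmod 8\Delta$; the final probabilities will be shown to be asymptotically independent of these choices.

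The heart of the argument is to show that, conditionally, $\widehat{\mathcal{M}}_d$ is with high probability equivalent (via row/column operations that don't change the nullity) to a block-diagonal matrix $\begin{bmatrix} I & 0 \\ 0 & A\end{bmatrix}$, where $I$ is a large identity block and $A$ is a genuinely uniform random $\FF_2$-matrix of size $m \times (m - \ord_2 \T(E^d/E^{\prime d}))$ with $m \to \infty$. The mechanism: by Lemma \ref{lem:mostMhatentries}, most entries of $\widehat{\mathcal{M}}_d$ are Legendre symbols $\legendrea{q}{p}$ with $p,q$ distinct primes dividing $d$, and under $\mathcal{P}$ these are independent uniform bits subject only to quadratic reciprocity; the diagonal of $A_{2,2}$ carries the independent bits $\legendrea{\frac{d}{p}(A+2\sqrt B)}{p}$. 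The reciprocity constraints couple $\legendre{q}{p}$ with $\legendre{p}{q}$, but because $p \equiv q \equiv 1 \bmod 4$ fails in general, one handles this exactly as in \cite{Kane}, \cite{KMR2}: split into the symmetric and antisymmetric parts, or observe that after fixing congruence classes the reciprocity sign is a constant and the remaining freedom is a symmetric random matrix plus independent diagonal, which for $\FF_2$-nullity purposes behaves like a uniform matrix. The columns from $\widehat{\Psi_\text{Orig}}$ and rows from $\widehat{B_{U,\text{Orig}}}$ form the bounded-size blocks $A_{1,1}, A_{1,2}, A_{1,3}, A_{2,1}, A_{3,1}$; since $A_{2,1}$ and $A_{3,1}$ have independent uniform entries (values of the characters $\chi_2,\chi_2',\legendrea{\cdot}{q}$ at the random $p_i$) and there are many rows, with high probability these columns can be cleared against the bulk, contributing a full-rank identity block that we absorb. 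What remains is the type-$1$ square block $A_{2,2}$ plus its interactions with the type-$2$ and type-$3$ primes; a dimension count via Proposition \ref{prop:dimsM} shows the surviving block $A$ has the asserted shape $m \times (m-u')$ where $u' = \ord_2 \T(E^d/E^{\prime d})$, and the conditioning $\dimF\Sel_\phi - \dimF\Sel_{\hatphi} = u$ forces (via Corollary \ref{cor:tambytype}, since $\ord_2\T$ equals this difference up to a global sign/shift) a relation pinning $u'$ in terms of $u$; the left nullity of $\widehat{\mathcal{M}}_d$ then equals the left nullity of $A$ plus a fixed offset, which Theorem \ref{thm:randmatthm} evaluates to $\alpha'_{2,r',u'}$ — matching $\alpha_{r,u}$ after bookkeeping.

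The main obstacle I anticipate is \emph{controlling the dependencies precisely enough to invoke Theorem \ref{thm:randmatthm} on an honestly uniform matrix}. Three issues must be reconciled: (1) the quadratic-reciprocity coupling between $\legendre{p_i}{p_j}$ and $\legendre{p_j}{p_i}$, which makes the off-diagonal part of $A_{2,2}$ symmetric rather than uniform — this is the same obstruction as in the $\Zt\times\Zt$ and $\mathcal S_3$ cases and is resolved by the observation that a symmetric $\FF_2$-matrix with uniform independent diagonal has the same nullity distribution as a uniform matrix (or, more robustly, by the Markov-chain / adding-one-prime-at-a-time argument); (2) the off-diagonal blocks $A_{2,3},A_{3,2},A_{3,3}$ entangle the type-$1$ block with other primes, so one must argue that conditionally on the type-$2$ and type-$3$ structure the type-$1$ block is still effectively uniform, and that the rank contributions of the other blocks are deterministic (here the asymmetry between types — type $3$ contributes rows but not columns, type $2$ columns but not rows — is what produces the shift $u$ in the matrix dimensions); (3) the exponential rate in Theorem \ref{thm:randmatthm} is needed so that summing/averaging over the bounded-size data ($p_i \bmod 8\Delta$, the $A_{1,1}$ block, the exact type counts) does not destroy the limit — this is why Theorem \ref{thm:randmatthm} is quoted with its exponential convergence. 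I would organize the section so that the combinatorial normal-form reduction of $\widehat{\mathcal{M}}_d$ comes first (deterministic, for any fixed realization of the symbols), then the probabilistic input (uniformity of the relevant symbols under $\mathcal P$) is applied to the normalized matrix, and finally Theorem \ref{thm:randmatthm} plus the high-probability event $\mathcal E_n$ finishes it.
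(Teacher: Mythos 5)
Your high-level architecture matches the paper's: Chernoff-type bounds to identify the two limits, a reduction of $\widehat{\mathcal{M}}_d$ to a block-diagonal form $\begin{bmatrix} I & 0\\ 0 & A\end{bmatrix}$ with $A$ uniform, and an application of Theorem \ref{thm:randmatthm} together with its exponential rate. But the step where you dispose of the quadratic-reciprocity coupling is the crux, and your resolution of it is wrong. You propose to keep the type-$1$ square block $A_{2,2}$ (plus its interactions with types $2$ and $3$) as the random matrix fed into Theorem \ref{thm:randmatthm}, on the grounds that ``a symmetric $\FF_2$-matrix with uniform independent diagonal has the same nullity distribution as a uniform matrix.'' That statement is false: the limiting corank distribution of a random symmetric matrix over $\FF_2$ is not the one in Theorem \ref{thm:randmatthm} (the nonsingularity probabilities tend to $\prod_{i\ge 1}(1-2^{1-2i})\approx 0.419$ versus $\prod_{i\ge1}(1-2^{-i})\approx 0.289$). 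Indeed, the reason the symmetric-matrix constants of \cite{Kane} and \cite{KMR2} do \emph{not} appear in Theorem \ref{MainThm} is precisely that the decisive block here is not symmetric. Bordering $A_{2,2}$ by the uniform blocks $A_{2,3},A_{3,2},A_{3,3}$ might plausibly wash out the symmetry, but that would itself require a proof you do not supply.

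The paper's route sidesteps the symmetric block entirely, and this is the idea your proposal is missing. The key structural point is the asymmetry between types: type-$2$ primes contribute columns but no rows, and type-$3$ primes contribute rows but no columns, so the entries of $A_{1,3}$, $A_{2,3}$ and of $A_3=[A_{3,1}\ A_{3,2}\ A_{3,3}]$ have no reciprocal partners inside $\widehat{\mathcal{M}}_d$ and are honestly i.i.d.\ uniform under $\mathcal{P}$. Lemma \ref{lem:fullranksub} then shows, by a first-moment union bound (deleting the column $\chi_p$ attached to any type-$1$ row occurring in a putative nullvector so as to restore independence), that the entire top block $\begin{bmatrix}A_{1,1}&A_{1,2}&A_{1,3}\\ A_{2,1}&A_{2,2}&A_{2,3}\end{bmatrix}$ --- including the problematic $A_{2,2}$ --- has full row rank with probability $1-O(2^{-n_2})$, the abundance of type-$2$ columns supplying the randomness. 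That block is therefore absorbed into the identity and its internal correlations never matter; what survives is an $n_3\times(n_3-u)$ genuinely uniform matrix $A_3''$ built from the type-$3$ rows, to which Theorem \ref{thm:randmatthm} applies directly. Your fallback of a Markov-chain argument in the style of \cite{KMR2} is a legitimate alternative (the paper acknowledges as much), but you would have to actually carry it out; as written, your main justification does not hold, and without Lemma \ref{lem:fullranksub} or a substitute you never exhibit an honestly uniform matrix to which Theorem \ref{thm:randmatthm} can be applied.
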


Theorem \ref{thm:SDLimitThm} will be proved by studying the distribution of nullities of $ \widehat{\mathcal{M}}_d$. We begin by noting that $\lim_{n\rightarrow\infty} \alpha_{r,u}(n) =  \lim_{n\rightarrow\infty} \alpha_{r,u}^\prime(n)$.

\begin{proposition}\label{prop:lotseachtype}
The probability that $d$ has at least $\frac{n}{10}$ prime factors of type $i$ for each $i$ tends exponentially quickly to $1$ as $n \rightarrow \infty$.
\end{proposition}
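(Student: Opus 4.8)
The plan is to reduce the statement to a routine large-deviation estimate for a sum of i.i.d.\ Bernoulli variables, once one observes that, under the distribution $\mathcal{P}$, the type of each prime factor $p_i$ of $d$ is determined by an independent, uniformly distributed residue class.

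First I would record that the type of $p_i$ is a function of $p_i$ modulo $8\Delta$ alone. By definition the type of $p_i$ is read off from the pair of Legendre symbols $\legendre{\Delta}{p_i}$ and $\legendre{\Delta^\prime}{p_i}$, and by quadratic reciprocity each of these is a fixed quadratic Dirichlet character evaluated at $p_i$, hence a function of $p_i$ modulo $8\Delta$. Under $\mathcal{P}$ the residue classes of $p_1,\dots,p_n$ modulo $8\Delta$ are chosen independently and uniformly from $(\Z/8\Delta\Z)^\times$, and the remaining randomness built into $\mathcal{P}$ (the symbols $\legendre{p_i}{p_j}$ and $\legendre{A+2\sqrt{B}}{p_i}$) is irrelevant to the types. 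Consequently, for each fixed $i$, the number $N_i$ of type-$i$ prime factors of $d$ is a sum of $n$ i.i.d.\ Bernoulli variables, i.e.\ $N_i \sim \mathrm{Bin}(n,q_i)$, where $q_i$ is the $\mathcal{P}$-probability that a single prime has type $i$.

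Next I would pin down $q_i$. The characters $\legendre{\Delta}{\cdot}$ and $\legendre{\Delta^\prime}{\cdot}$ on $(\Z/8\Delta\Z)^\times$ are both nontrivial and are distinct from each other, since none of $\Delta$, $\Delta^\prime$, $\Delta\Delta^\prime$ is a square in $\Q^\times$; this is Lemma 4.2 of \cite{K} under the standing hypothesis that $E$ has no cyclic $4$-isogeny over $\Q(E[2])$, and it was already invoked in Section \ref{subsubsec:depcolremove}. Hence the joint homomorphism $\left(\legendre{\Delta}{\cdot},\legendre{\Delta^\prime}{\cdot}\right)\colon (\Z/8\Delta\Z)^\times \to \{\pm 1\}^2$ is surjective with fibers of equal size, so $q_i = \frac14$ for every $i$; in particular the threshold $n/10$ lies strictly below the common mean $n/4$.

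Finally I would apply a Chernoff bound: since $N_i \sim \mathrm{Bin}(n,\frac14)$ and $n/10 < n/4$, there is an absolute constant $c > 0$ with $\mathbb{P}(N_i < n/10) \le e^{-cn}$ for all $n$, and a union bound over the four types gives $\mathbb{P}(N_i < n/10 \text{ for some } i) \le 4e^{-cn}$. Thus the probability that every type occurs at least $n/10$ times is at least $1 - 4e^{-cn}$, which tends to $1$ exponentially quickly. The only step requiring genuine care is the non-degeneracy $q_i > 0$ (indeed $q_i = \frac14$), and this is exactly the point at which the no-cyclic-$4$-isogeny hypothesis enters; the rest is routine.
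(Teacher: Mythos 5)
Your proof is correct and follows essentially the same route as the paper: invoke Lemma 4.2 of \cite{K} to get that none of $\Delta$, $\Delta'$, $\Delta\Delta'$ is a square, deduce equidistribution of the four types from the uniformity and independence of the $p_i$ modulo $8\Delta$, and finish with a Chernoff bound. You spell out the intermediate step that $q_i = \tfrac14$ (via the surjectivity of the joint character map), which the paper's one-line proof asserts as ``equidistribution'' without elaboration.
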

\begin{proof}
By Lemma 4.2 in \cite{K}, the assumption that $E$ does not have a cyclic 4-isogeny defined over $\QQ(E[2])$ ensures that none of $\Delta$, $\Delta^\prime$, and $\Delta\Delta^\prime$ is a square in $\Q^\times$. As a result, the equidistribution of primes of each type follows from the assumption that the images of the primes $p_i$ are uniform and independent in $(\Z/(8\Delta\Z))^\times$. The proposition then follows from the Chernoff bounds.
\end{proof}

\begin{lemma}\label{lem:fullranksub}
Let $\widehat{\mathcal{M}}_d^\prime$ be the submatrix of $\widehat{\mathcal{M}}_d$ given by $\begin{bmatrix}A_{1,1} & A_{1,2} & A_{1,3} \\ A_{2,1} & A_{2,2} & A_{2,3}.\end{bmatrix}$ Then $\widehat{\mathcal{M}}_d^\prime$ has trivial left nullspace with probability $1-O(2^{-n_2})$.
\end{lemma}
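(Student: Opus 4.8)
The plan is to analyze the matrix $\widehat{\mathcal{M}}_d^\prime = \begin{bmatrix} A_{1,1} & A_{1,2} & A_{1,3} \\ A_{2,1} & A_{2,2} & A_{2,3} \end{bmatrix}$ by exhibiting a large block that becomes, with high probability, an invertible square matrix, so that row-reduction peels it off and shows the whole matrix has no left kernel. The key observation is that the square block $A_{2,2}$, whose rows and columns are both indexed by the type-1 primes dividing $d$, has off-diagonal entries $\legendrea{q}{p}$ (for $p \ne q$ of type 1) that are independent uniform $\FF_2$ random variables up to the single constraint of quadratic reciprocity, while its diagonal entries $\legendrea{\frac{d}{p}(A+2\sqrt{B})}{p}$ are governed by the characters $\legendre{A+2\sqrt{B}}{p_i}$, which under the distribution $\mathcal{P}$ are independent fair coin flips \emph{independent of all the off-diagonal entries}. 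This is exactly the setup in which one can hope for full rank.

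The steps I would carry out: First, restrict attention to rows and columns coming from the type-1 and type-2 primes and from $\widehat{B_{U,\text{Orig}}}$, and note that it suffices to find within $\widehat{\mathcal{M}}_d^\prime$ an invertible square submatrix whose row set is all of the rows of $\widehat{\mathcal{M}}_d^\prime$; equivalently, I want to show the rows are linearly independent with the stated probability. Second, I would handle the block $A_{2,2}$: conditioning on the off-diagonal entries (a symmetric matrix over $\FF_2$ determined by reciprocity constraints, together with the ambient mod-$8\Delta$ data), the diagonal entries are independent uniform bits, so $A_{2,2}$ is of the form $S + D$ where $S$ is fixed and $D$ is a uniformly random diagonal matrix. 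A standard fact — or a short induction expanding along a row/column — shows that $S + D$ is invertible over $\FF_2$ with probability bounded below by an absolute constant, and more precisely one can show the probability that $S+D$ is \emph{singular} is $O(2^{-k})$ where $k = n_1$ is the size of the block; but since the lemma only claims $O(2^{-n_2})$, and we know from Proposition \ref{prop:lotseachtype} that $n_1 \ge n/10$ with probability $1 - O(2^{-cn})$, this will be more than enough once combined with the next step. Third — and this is the real content — I need to incorporate the rows indexed by $\widehat{B_{U,\text{Orig}}}$ and by the type-3 primes' absence, i.e., show that after using the invertibility of $A_{2,2}$ to clear out the type-1 rows, the remaining rows (the $\widehat{B_{U,\text{Orig}}}$ rows, whose entries in the $A_{*,3}$ columns are the symbols $\legendrea{u}{p}$ for $u \in \widehat{B_{U,\text{Orig}}}$ and $p$ of type 2) are themselves independent. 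Here the columns indexed by type-2 primes $\chi_p$ are the useful resource: by Lemma \ref{lem:mostMhatentries}(i), the entries $\legendrea{u}{p}$ for fixed $u$ and varying type-2 prime $p$ are — again up to reciprocity, which for $u \mid 2\Delta$ versus $p$ of type 2 amounts to fixing $p \bmod 8\Delta$, already part of the $\mathcal{P}$-data — controlled so that the submatrix $A_{1,3}$ (after the clearing step modifies it) has rows that are independent uniform vectors in an $n_2$-dimensional space, failing to be independent with probability $O(2^{-n_2})$.

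The main obstacle I expect is bookkeeping the dependencies correctly: the entries of $\widehat{\mathcal{M}}_d^\prime$ are not literally independent, and quadratic reciprocity links $\legendre{p_i}{p_j}$ to $\legendre{p_j}{p_i}$, so I must be careful that when I condition on the mod-$8\Delta$ classes of all the $p_i$ and on the "lower-triangular half" of the reciprocity data, the quantities I am using for each of the three independence arguments ($A_{2,2}$ diagonal, then $A_{1,3}$ rows after clearing) are genuinely conditionally independent and uniform. The cleanest route is to fix an ordering of the primes, reveal the symbols $\legendre{p_i}{p_j}$ for $i<j$ together with all mod-$8\Delta$ data and all $\legendre{A+2\sqrt B}{p_i}$, note that these determine the full matrix, and then run the two random-matrix arguments over the remaining randomness — but in fact once everything is revealed the matrix is deterministic, so the correct framing is: the matrix $\widehat{\mathcal{M}}_d^\prime$ is a specific function of a collection of independent uniform bits (the mod-$8\Delta$ classes, the upper-triangular reciprocity bits, and the $\legendre{A+2\sqrt B}{p_i}$), and I bound the singularity probability by first exposing everything except the diagonal bits of $A_{2,2}$ and the relevant reciprocity bits feeding $A_{1,3}$, then invoking the elementary $\FF_2$ linear-algebra estimates. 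Getting the quantifier "$O(2^{-n_2})$" rather than merely "$o(1)$" requires that the final step genuinely produce an exponentially small bound in $n_2$, which it does because each of the $\ge n/10$-or-so type-2 columns independently reduces the dimension of the potential kernel.
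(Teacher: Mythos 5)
There is a genuine gap in your second step, and it is fatal to the architecture of the argument. You claim that $A_{2,2}=S+D$, with $S$ fixed and $D$ a uniformly random diagonal matrix over $\FF_2$, is singular with probability $O(2^{-n_1})$ (or at least invertible with probability bounded below by an absolute constant). Neither claim is true: if $S$ has zero off-diagonal part, then $S+D=D$ is singular unless every diagonal bit equals $1$, i.e.\ with probability $1-2^{-n_1}$. Even using all of the randomness in the off-diagonal entries, $A_{2,2}$ is essentially a random square matrix over $\FF_2$ (with a reciprocity-induced symmetry between the $(p,q)$ and $(q,p)$ entries), and such a matrix is singular with probability bounded away from zero as $n_1\rightarrow\infty$ (in the iid case the invertibility probability tends to $\prod_{s\ge 1}(1-2^{-s})\approx 0.289$). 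So you cannot ``peel off'' $A_{2,2}$: with constant probability there is no invertible square block supported on the type-1 columns alone, and your subsequent clearing step never gets started. No argument along these lines can give a failure probability that is $o(1)$, let alone $O(2^{-n_2})$.

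The point you need --- and which your last sentence gestures at without exploiting --- is that $\widehat{\mathcal{M}}_d^\prime$ has roughly $n_2$ more columns than rows, and the type-2 columns must be used to control the type-1 rows as well, not only the $\widehat{B_{U,\text{Orig}}}$ rows. The paper's proof is a first-moment computation: drop the $A_{*,1}$ block, observe that the remaining matrix has $n_1+O_E(1)$ rows and $n_1+n_2$ columns, and show that each fixed nonzero linear combination $w$ of rows vanishes with probability at most $2^{-(n_1+n_2)+1}$ (after conditioning so that the entries of $w$ in all but possibly one column are independent uniform bits; for combinations involving a type-1 row $r_p$ one discards the column $\chi_p$ to avoid the dependent diagonal entry). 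A union bound over the $O_E(2^{n_1})$ linear combinations then bounds the expected number of nontrivial left nullvectors by $O_E(2^{-n_2})$. If you insist on a ``find an invertible square submatrix'' framing, the column set of that submatrix must be allowed to vary over the type-1 and type-2 columns together, and accounting for that choice is precisely what the moment computation does for you.
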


\begin{proof}
Let $\widehat{\mathcal{M}}_d^{\prime\prime}$ be the submatrix of $\widehat{\mathcal{M}}_d^\prime$ given by $\begin{bmatrix}A_{1,2} & A_{1,3} \\ A_{2,2} & A_{2,3}.\end{bmatrix}$ and note that the nullspace of $\widehat{\mathcal{M}}_d^{\prime}$ is trivial if the nullspace of $\widehat{\mathcal{M}}_d^{\prime\prime}$ is trivial. We will show that the expected number of non-trivial left nullvectors of $\widehat{\mathcal{M}}_d^{\prime \prime}$ is $O_E(2^{-n_2})$.

Suppose that $w$ is a linear combination of rows in the submatrix $\begin{bmatrix} A_{1,2} & A_{1,3} \end{bmatrix}$ of $\widehat{\mathcal{M}}_d^{\prime\prime}$. The entries of $w$ are given by $\legendre{u}{p_i}$, where $u$ is some element in $\Z_T^\times$ and $p_i$ is a prime of type $1$ or $2$. Assuming the distribution $\mathcal{P}$, these entries are uniformly and independently distributed in $\{0,1\}$ and the probability that $w$ is trivial is therefore equal to $2^{-(n_1+n_2)}$.

Now suppose that $w$ is a linear combination of rows of $\widehat{\mathcal{M}}_d^{\prime\prime}$ containing at least one row $r$ of the submatrix $\begin{bmatrix} A_{2,2} & A_{2,3}\end{bmatrix}$. By design, $r = r_p$ for some prime $p$ of type $1$ dividing $d$ and $\widehat{\mathcal{M}}_d^{\prime\prime}$ therefore has a column $c_p$ corresponding to $\chi_p$. Let $\widehat{\mathcal{M}}_d^{\prime \prime \prime}$ be the submatrix of $\widehat{\mathcal{M}}_d^{\prime\prime}$ obtained by removing the column  $c_p$ and let $r^\prime$ and $w^\prime$ be the result of removing the column $c_p$ from $r$ and $w$. The entries in the row $r^\prime$ of $\widehat{\mathcal{M}}_d^{\prime \prime\prime}$ are independent of all the other entries in $\widehat{\mathcal{M}}_d^{\prime \prime\prime}$. Therefore the likelihood that $w^\prime$ is trivial is equal to $2^{-c}$, where $c = n_1 + n_2 -1$ is the number of columns of $\widehat{\mathcal{M}}_d^{\prime \prime\prime}$. As $w^\prime$ must be trivial for $w$ to be trivial, we find that $w$ is trivial with probability at most $2^{-(n_1+n_2)+1}$.

Finally, we observe that $\widehat{\mathcal{M}}_d^{\prime\prime}$ contains $n_1 + O_E(1)$ rows. There are therefore $O_E(2^{n_1})$ linear combinations of rows of $\widehat{\mathcal{M}}_d^{\prime\prime}$, each of which is trivial with probability at most $2^{-(n_1+n_2)+1}$. The expected number of nullvectors of $\widehat{\mathcal{M}}_d^{\prime\prime}$ is therefore bounded by $O_E(2^{-n_2})$.
\end{proof}

We are now ready to prove Theorem \ref{thm:SDLimitThm}.

\begin{proof}[Proof of Theorem \ref{thm:SDLimitThm}]
By Proposition \ref{prop:lotseachtype}, we are content to limit ourselves to proving $\lim_{n\rightarrow\infty} \alpha_{r,u}^\prime(n) = \alpha_{r,u}$.

Assuming distribution $\mathcal{P}$, the dimension of $\dimF \Sel_2(E^d/\QQ)_0$ is distributed like the nullity of $\widehat{\mathcal{M}}_d$, and as a result, the dimension of $\dimF \Sel_2(E^d/\QQ)$ is distributed like one plus the nullity of $\widehat{\mathcal{M}}_d$. We therefore wish to study this latter distribution.

Let $r$ be the number of rows and $c$ be the number of columns of of $\widehat{\mathcal{M}}_d$. By Proposition \ref{prop:dimsM}, we have $u = r - c = \ord_2 \T(E^d/E^{\prime d})$.

Let $\widehat{\mathcal{M}}_d^\prime$ and $A_3$ be the submatrices of  $\widehat{\mathcal{M}}_d$ given by $\widehat{\mathcal{M}}_d^\prime = \begin{bmatrix} A_{1,1} & A_{1,2} & A_{1,3} \\ A_{2,1} & A_{2,2} & A_{2,3}\end{bmatrix}$ and $A_3 = \begin{bmatrix} A_{3,1} & A_{3,2} & A_{3,3} \end{bmatrix}$. Assuming distribution $\mathcal{P}$, we find that the entries of $A_3$ are independently and uniformly distributed in $\FF_2$ and we observe that this will remain the case if we perform standard column operations on $A_3$.

Since $d$ has at least $\frac{n}{10}$ prime factors of type $n_2$, Lemma \ref{lem:fullranksub} tells us that $\widehat{\mathcal{M}}_d^\prime$ has full rank with exponentially high probability as $n \rightarrow \infty$. Conditioning on this event, we may therefore apply column operations to transform $\widehat{\mathcal{M}}_d^\prime$ into the matrix $\begin{bmatrix}I & 0 \end{bmatrix}$, where $I$ is an $m \times m$ identity matrix with $m = r - n_3$ rows and columns.

We may apply these same column operations to $\widehat{\mathcal{M}}_d$ to obtain $\begin{bmatrix} I \hspace{0.15in} 0 \\ A_3^\prime \end{bmatrix}$ and then use row operations to zero out the leftmost $m$ columns of $A_3^\prime$ yielding $\begin{bmatrix} I & 0 \\ 0 & A_3^{\prime\prime}\end{bmatrix}$, where $A_3^{\prime\prime}$ is an $n_3 \times c - m$ matrix with entries independently and uniformly distributed in $\FF_2$. We also observe that the dimension of the left nullspace of $\widehat{\mathcal{M}}_d$ is equal to that of $A_3^{\prime\prime}$.

As $m = r - n_3$, we find that $A_3^{\prime\prime}$ has $c - m = c - r + n_3 = n_3 - u$ columns. Theorem \ref{thm:randmatthm} then tells us that the probability that $A_3^{\prime \prime}$ -- and therefore $\widehat{\mathcal{M}}_d$ -- has nullity equal to $r-1$ tends to $\alpha_{r,u}$ exponentially quickly as $n_3 \rightarrow \infty$. Since we assumed that $d$ had at least $\frac{n}{10}$ prime factors of type $3$, we get the same result as $n \rightarrow \infty$.

As mentioned above, the dimension of $\dimF \Sel_2(E^d/\QQ)$ is one greater than the nullity of $\widehat{\mathcal{M}}_d$. As a result, the probability that $\dimF \Sel_2(E^d/\QQ)$ is equal to $r$ tends exponentially quickly to $\alpha_{r,u}$.
%
%
%
\end{proof}

\section{Natural Density}\label{sec:analytictech}

While Theorem \ref{thm:SDLimitThm} proves a limiting result along the lines of \cite{SD}, it would be convenient to have a result in terms of natural density such as Theorem \ref{MainThm} above. We proceed in a manner analogous to that in \cite{Kane} with a few added complications due to our slightly different context. In particular, we attempt to get at the densities of ranks via moments of the actual sizes of the Selmer groups in question on a large subset. In particular, letting $\omega(n)$ denote the number of distinct primes dividing $n$, we define:

\begin{defn}
Let $S'(X,u)$ be the set of $d$ satisfying the following properties:
\begin{enumerate}
\item $0<d\leq X$
\item $d$ is squarefree
\item $d$ is relatively prime to $2\Delta\Delta'$
\item $\dimF\Sel_\phi(E^d) - \dimF\Sel_\phi(E'^d) = u$
\item $|\omega(d)-\log\log(X)| < \log\log(X)^{5/8}$
\item $d$ has more than $\omega(d)/10$ prime factors of each type (ie. type 1 through type 4).
\end{enumerate}
Let $S''(X,u)$ be the set of $d$ satisfying only the first four of these properties.
\end{defn}

We note that $S'(X,u)$ is a proper subset of $S''(X,u)$, but that the density of one within the other approaches 1 as $X\rightarrow \infty$
\begin{lem}\label{densityLem}
For any $E$ and $u$ we have that
$$
\lim_{X\rightarrow\infty} \frac{|S'(X,u)|}{|S''(X,u)|} =1.
$$
\end{lem}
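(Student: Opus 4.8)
The plan is to show that the two "bad" conditions—failure of (5) (the prime count $\omega(d)$ not being within $\log\log(X)^{5/8}$ of $\log\log X$) and failure of (6) (some type having too few prime factors)—each cut out a subset of $S''(X,u)$ of density zero, so that the remaining set $S'(X,u)$ has full density. The governing principle is that the constraint (4), $\dimF\Sel_\phi(E^d)-\dimF\Sel_\phi(E'^d)=u$, is a condition of "positive probability bounded away from $0$" once $\omega(d)$ is in a typical range, so it does not interfere with the standard counting estimates for conditions (5) and (6); one has to be slightly careful because we are conditioning on (4), not counting all squarefree $d$.

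The key steps, in order. First, recall/establish the unconditioned counts: by the Sathe--Selberg / Erd\H{o}s--Kac circle of ideas (or more elementarily, Landau-type estimates for $\pi_k(X)=\#\{d\le X \text{ squarefree}, \omega(d)=k\}$), the number of squarefree $d\le X$ coprime to $2\Delta\Delta'$ with $|\omega(d)-\log\log X|\ge \log\log(X)^{5/8}$ is $o\big(X/\sqrt{\log\log X}\big)$—in fact it is $O(X (\log X)^{-c})$ for a suitable $c>0$ since we are excluding a window of width $(\log\log X)^{5/8}\gg (\log\log X)^{1/2}$ standard deviations, so Chernoff/large-deviation bounds on the Poisson$(\log\log X)$-like distribution of $\omega(d)$ give a power-of-$\log$ saving. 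Second, for condition (6): conditional on $\omega(d)=k$ with $k\asymp\log\log X$, the "type vector" $(n_1,\dots,n_4)$ of a random such $d$ behaves (by the argument already used in Proposition \ref{prop:lotseachtype}, i.e. equidistribution of the $p_i$ in $(\Z/8\Delta\Z)^\times$ together with Chernoff) like a multinomial with each type having positive probability, so the probability that some $n_i\le k/10$ is exponentially small in $k$, hence $O((\log X)^{-c'})$ after summing over the relevant $k$; translating this probabilistic statement into a count of $d\le X$ requires an equidistribution input for primes in progressions (Siegel--Walfisz, or the relevant lemma from \cite{Kane}), which is where the real analytic work sits. Third, and this is the step requiring care, one must handle the conditioning on (4): we want the bad sets to be small \emph{relative to} $|S''(X,u)|$, so we need a lower bound $|S''(X,u)|\gg X/\sqrt{\log\log X}$ (or at least $\gg X(\log X)^{-\epsilon}$ for every $\epsilon$). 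This follows because $\dimF\Sel_\phi(E^d)-\dimF\Sel_\phi(E'^d)=\ord_2\T(E^d/E'^d)=c_d+|T_d^3|-|T_d^2|$ by Proposition \ref{prop:tamcalc}, and $|T_d^3|-|T_d^2|$ is a signed sum of $\pm 1$'s over primes of types $2,3$ which—again by equidistribution—takes any fixed value $u-c_d$ with probability $\asymp 1/\sqrt{\log\log X}$ (a local central limit / \cite{KLO}-type estimate), uniformly over the congruence class of $d$ mod $8\Delta$. Combining: each bad subset has size $O(X(\log X)^{-c})$ while $|S''(X,u)|\gg X(\log\log X)^{-1/2}$, and the ratio tends to $0$.

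The main obstacle is the last point made quantitative: proving the lower bound $|S''(X,u)| \gg X(\log\log X)^{-1/2}$, because this is not a purely probabilistic statement about the model distribution $\mathcal{P}$ but a genuine arithmetic counting statement about squarefree integers $d\le X$, and the event (4) depends on Legendre symbols $\legendre{p_i}{p_j}$ among the prime factors of $d$. One route is to invoke the machinery of \cite{KLO} essentially verbatim: that paper already proves $\dimF\Sel_\phi(E^d)-\dimF\Sel_\phi(E'^d)$ has the claimed $\mathcal{N}(0,\tfrac12\log\log X)$ limiting distribution with respect to natural density, which gives $|S''(X,u)| = (c_u+o(1))\,X/\sqrt{\log\log X}$ directly and makes the lower bound immediate; the bad-set estimates above then need only the cruder upper bounds, which follow from the same sieve/character-sum techniques (as in \cite{Kane}). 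So in practice the proof reduces to: cite \cite{KLO} for the denominator asymptotic, and run the Chernoff-plus-Siegel--Walfisz argument for the numerator upper bounds on the two bad sets.
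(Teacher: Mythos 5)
Your decomposition is the one the paper uses: show $|S''(X,u)|\gg X(\log\log X)^{-1/2}$, then show the two bad sets (failing (5), failing (6)) are each smaller than this, and conclude. For the bad sets you correctly identify Chernoff plus equidistribution of the prime factors in $(\Z/8\Delta\Z)^\times$ as the tools, and the paper does exactly that, running everything through its Corollary \ref{averageDensityCor} (a strengthening of \cite{Kane}, Prop.\ 10). The one real divergence is your treatment of the lower bound on $|S''(X,u)|$: you flag it as the main obstacle and propose importing the normal-distribution result of \cite{KLO} essentially verbatim. The paper instead proves it self-containedly: it fixes a residue class $L \pmod{4\Delta\Delta'}$ compatible with $u$, writes condition (4) via Proposition \ref{prop:tamcalc} as $n_3-n_2=U$ for a fixed constant $U$, observes that $\bar f=\Theta(\,(\log\log X)^{-1/2})$ for the corresponding indicator on $((\Z/D\Z)^*)^n$, and feeds this into Corollary \ref{averageDensityCor}. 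Both routes are legitimate; the paper's keeps the argument internal and gives exactly the $\Theta(X/\sqrt{\log\log X})$ it needs with no external citation. One small quantitative slip on your end: for the failure of (5), a window of width $(\log\log X)^{5/8}$ about the mean $\log\log X$ is only $(\log\log X)^{1/8}$ standard deviations, so Chernoff yields $\exp(-\Omega((\log\log X)^{1/4}))$, not a power of $\log X$ as you claim; this weaker bound is still $o((\log\log X)^{-1/2})$, which is what's needed, and is exactly what the paper obtains via its modification of \cite{Kane} Corollary 8.
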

In order to prove this, we will need the following slight strengthening of \cite{Kane} Proposition 10:
\begin{prop}\label{averageDesnityProp}
Let $n,N,D$ be integers with $\log\log N>1$, and $(\log\log N)/2 < n < 2\log\log N$. Let $G=((\Z/D\Z)^*)^n$. Let $f:G\rightarrow\C$ be a function. Let $\bar{f} = \frac{1}{|G|}\sum_{g\in G}f(g)$. Let $|f|_2=\sqrt{\frac{1}{|G|}\sum_{g\in G}|f(g)|^2}.$ Then
$$
\frac{1}{n!}\sum_{\substack{p_1\cdots p_n \leq N \\ p_i \textrm{ distinct primes} \\ (p_i,D)=1}}f(p_1,\ldots,p_n) = \bar{f}\left(\frac{1}{n!}\sum_{\substack{p_1\cdots p_n \leq N \\ p_i \textrm{ distinct primes} \\ (p_i,D)=1}}1\right)+ O_D\left(\frac{|f|_2N\log\log\log N}{\log\log N} \right).
$$
\end{prop}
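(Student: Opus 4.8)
The plan is to estimate the character sum $\sum f(p_1,\ldots,p_n)$ over tuples of distinct primes with product at most $N$ by expanding $f$ into additive/multiplicative characters on $G = ((\Z/D\Z)^*)^n$, bounding the contribution of each nontrivial character, and observing that the trivial character reproduces the main term $\bar f \cdot (\text{number of tuples})$. Since $G$ is a finite abelian group, write $f = \sum_{\psi} c_\psi \psi$ with $c_\psi = \frac{1}{|G|}\sum_g f(g)\overline{\psi(g)}$; then $c_{\psi_0} = \bar f$ for the trivial character $\psi_0$, and Parseval gives $\sum_{\psi} |c_\psi|^2 = |f|_2^2$. The error term is thus $\sum_{\psi \ne \psi_0} c_\psi \left(\frac{1}{n!}\sum_{p_1\cdots p_n \le N} \psi(p_1,\ldots,p_n)\right)$, and by Cauchy--Schwarz this is at most $|f|_2 \left(\sum_{\psi\ne\psi_0} \left|\frac{1}{n!}\sum \psi\right|^2\right)^{1/2}$, so it suffices to bound $\sum_{\psi\ne\psi_0}\left|\frac{1}{n!}\sum_{p_1\cdots p_n\le N}\psi(p_1,\ldots,p_n)\right|^2$ by $O_D\!\left(\frac{N\log\log\log N}{\log\log N}\right)^2$.

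The key point is that a character $\psi$ on $G$ factors as $\psi(p_1,\ldots,p_n) = \prod_{i=1}^n \chi_{a_i}(p_i)$ where each $\chi_{a_i}$ is a Dirichlet character mod $D$, and since $\psi$ is nontrivial at least one $\chi_{a_i}$ is nontrivial. Here one must be careful because the $p_i$ are unordered and the sum is symmetric; the standard device (as in \cite{Kane}) is to reorganize the sum over unordered tuples, pull out the smallest prime or handle the symmetry combinatorially, and reduce to estimating $\sum_{p\le y}\chi(p)$ for nontrivial $\chi$ mod $D$. The second step is to invoke a Siegel--Walfisz-type bound for $\sum_{p\le y}\chi(p)$ (with an error saving of a power of $\log$ uniform in the conductor $D$, which is why the implied constant depends on $D$), feed this into the $n$-fold convolution, and sum over the roughly $(\log\log N)^{\pm}$ range of values of $n$. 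The factor $\log\log\log N / \log\log N$ in the error arises exactly as in \cite{Kane} Proposition 10: roughly one gets a saving of $1/\log\log N$ from the prime-counting estimate together with a combinatorial loss of $\log\log\log N$ from summing over which coordinate carries the nontrivial character and over the sizes of the prime factors.

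Concretely I would proceed as follows. First, set up the Fourier expansion of $f$ on $G$ and record $c_{\psi_0} = \bar f$, $\sum_\psi |c_\psi|^2 = |f|_2^2$. Second, apply Cauchy--Schwarz to reduce to a bound on $\Sigma := \sum_{\psi \ne \psi_0}\left|\frac{1}{n!}\sum_{p_1\cdots p_n\le N,\ p_i\text{ distinct}, (p_i,D)=1}\psi(p_1,\ldots,p_n)\right|^2$. Third, expand the square, swap orders of summation, and recognize the inner object as a count of pairs of prime tuples weighted by $\psi$ of one and $\overline\psi$ of the other; group characters by which subset of coordinates is nontrivial, and use the multiplicativity of $\psi$ together with a Siegel--Walfisz estimate for sums of a nontrivial Dirichlet character over primes in an interval (with $D$-dependent constants). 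Fourth, carry out the resulting elementary but bookkeeping-heavy estimate — essentially following \cite{Kane} Proposition 10 line by line, with the only change being that we need it for the slightly wider range $(\log\log N)/2 < n < 2\log\log N$ rather than just $n$ near $\log\log N$, which costs nothing since all bounds there were already uniform in this range.

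The main obstacle is the fourth step: the combinatorial/analytic estimate of $\Sigma$, in particular tracking uniformity in $n$ over the stated range and extracting precisely the $\log\log\log N / \log\log N$ saving rather than something weaker. This is where the symmetry of the prime tuples, the need to isolate a single prime variable carrying a nontrivial character, and the interplay between the number of prime factors and the Siegel--Walfisz error all have to be balanced carefully; everything else is formal. Since the statement is advertised as a "slight strengthening" of \cite{Kane} Proposition 10, in practice one cites that proof and indicates that the only modification needed is to replace the hypothesis $n \sim \log\log N$ by $(\log\log N)/2 < n < 2\log\log N$ and to check that the bounds used there remain valid — uniformly — throughout this range.
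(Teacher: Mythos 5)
Your proposal takes essentially the same route as the paper, whose proof is the single sentence that the result follows from the proof of \cite{Kane} Proposition 10; the Fourier expansion on $G$, the observation that the trivial character produces $\bar f$ times the tuple count, the Cauchy--Schwarz step introducing $|f|_2$, and the reduction to character-sum estimates \`a la Siegel--Walfisz are indeed the ingredients of that proof, and the only new content here is checking that the bounds remain uniform over the slightly wider window $(\log\log N)/2 < n < 2\log\log N$, which you correctly identify. One small caution: in your step 3, if you literally expand $|S(\psi)|^2$ and then sum over $\psi$, orthogonality on $G$ collapses the characters to a diagonal pair count and leaves nothing for Siegel--Walfisz to act on; the intended argument (and the one that matches your ``group by which coordinates are nontrivial'') is to bound each $|S(\psi)|$ individually in terms of the number of nontrivial components and then sum, exactly as in \cite{Kane}.
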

\begin{proof}
The result follows from the proof of \cite{Kane} Proposition 10.
\end{proof}
There is a particularly, nice version of this result when $f$ is symmetric.
\begin{cor}\label{averageDensityCor}
Let $n,N,D$ be integers with $\log\log N>1$, and $(\log\log N)/2 < n < 2\log\log N$. Let $G=((\Z/D\Z)^*)^n$. Let $f:G\rightarrow\C$ be a function symmetric in its inputs. For $d$ relatively prime to $D$ with $\omega(d)=n$, let $f(d)=f(p_1,\ldots,p_n)$, where $p_i$ are the prime factors of $d$. Let $\bar{f} = \frac{1}{|G|}\sum_{g\in G}f(g)$. Let $|f|_2=\sqrt{\frac{1}{|G|}\sum_{g\in G}|f(g)|^2}.$ Then
$$
\sum_{\substack{d\leq N\\ d\textrm{ squarefree}\\(d,D)=1}}f(p_1,\ldots,p_n) = \bar{f}\left(\sum_{\substack{d\leq N\\ d\textrm{ squarefree}\\(d,D)=1}}1\right)+ O_D\left(\frac{|f|_2N\log\log\log N}{\log\log N} \right).
$$
\end{cor}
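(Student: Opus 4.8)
The plan is to derive Corollary \ref{averageDensityCor} from Proposition \ref{averageDesnityProp} by exploiting the symmetry of $f$. The key observation is that for a squarefree $d$ with $\omega(d) = n$, the ordered tuples $(p_1,\ldots,p_n)$ of prime factors are exactly the $n!$ orderings of a single unordered set; since $f$ is symmetric, $f(p_1,\ldots,p_n) = f(d)$ for every such ordering. Therefore
$$
\sum_{\substack{p_1\cdots p_n \leq N \\ p_i \textrm{ distinct primes} \\ (p_i,D)=1}} f(p_1,\ldots,p_n) = n! \sum_{\substack{d \leq N,\ d \textrm{ squarefree} \\ \omega(d) = n,\ (d,D)=1}} f(d),
$$
and likewise with $f \equiv 1$. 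Substituting these two identities into Proposition \ref{averageDesnityProp} and cancelling the common factor $\frac{1}{n!}$ gives exactly the stated formula, provided we interpret the left-hand sum in the Corollary as being restricted to $d$ with $\omega(d) = n$ (which is forced anyway by the notation $f(p_1,\ldots,p_n)$ referring to the $n$ prime factors of $d$).

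First I would make the bijection between ordered prime tuples and squarefree integers with a prescribed number of prime factors precise: the map $(p_1,\ldots,p_n) \mapsto p_1 \cdots p_n$ is exactly $n!$-to-one onto $\{d \leq N : d \textrm{ squarefree},\ \omega(d)=n,\ (d,D)=1\}$, since distinctness of the $p_i$ is equivalent to $d$ being squarefree with exactly $n$ prime factors, and the product is $\leq N$ iff $d \leq N$. Next I would apply this to rewrite both the weighted sum $\sum f(p_1,\ldots,p_n)$ and the counting sum $\sum 1$ appearing in Proposition \ref{averageDesnityProp}, pulling out the factor $n!$ in each case. Then I would divide through, observing that the $\frac{1}{n!}$ prefactors cancel and that the error term $O_D(|f|_2 N \log\log\log N / \log\log N)$ is unaffected since $\bar f$ and $|f|_2$ are computed over $G = ((\Z/D\Z)^*)^n$ and do not involve the $n!$. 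Finally I would note that the hypotheses on $n, N, D$ carry over verbatim.

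I do not expect a genuine obstacle here — this is essentially bookkeeping. The one subtlety worth stating carefully is that the sum $\sum_{d \leq N} f(p_1,\ldots,p_n)$ in the Corollary's statement is implicitly a sum over $d$ with $\omega(d) = n$ only (the symbol $f(p_1,\ldots,p_n)$ has no meaning otherwise), so there is no need to sum over varying numbers of prime factors; that is handled later when Corollary \ref{averageDensityCor} is invoked with $n$ ranging over the window $|n - \log\log X| < (\log\log X)^{5/8}$. I would therefore keep the proof to the two sentences already present, perhaps expanding to spell out the $n!$-to-one correspondence explicitly for the reader's convenience.
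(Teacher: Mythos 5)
Your proposal matches the paper's proof, which likewise deduces the corollary from Proposition \ref{averageDesnityProp} by observing that each squarefree $d$ with $\omega(d)=n$ arises from exactly $n!$ ordered tuples of distinct primes, so the symmetry of $f$ lets the two sums be identified after cancelling the $\frac{1}{n!}$. Your added remark that the sum is implicitly restricted to $\omega(d)=n$ is a correct reading of the (slightly abusive) notation and does not change the argument.
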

\begin{proof}
This follows immediately from Proposition \ref{averageDesnityProp} upon noting that each such $d$ can be written as a product $p_1,\ldots,p_n$ in exactly $n!$ ways.
\end{proof}
We can now prove Lemma \ref{densityLem}.
\begin{proof}
We begin by showing that $S''(X,u)$ is reasonably big, in particular, that $|S''(X,u)|=\Omega(X\log\log(X)^{-1/2})$. Pick an $L$ modulo $D=4\Delta\Delta'$ so that it is possible to have $\dimF\Sel_\phi(E^d)-\dimF\Sel_\phi(E'^d)=u$ for some $d\equiv L\pmod{D}.$ By Proposition \ref{prop:tamcalc}, this will happen whenever $d\equiv L$ and $n_3-n_2$ is equal to some particular constant, $U$. In particular, this implies that $\legendre{\Delta\Delta'}{L}= (-1)^U.$

Consider the number $d\leq X$ with $d$ squarefree and $\omega(d)=n$ for some $|n-\log\log X|<\log\log(X)^{5/8}$ with $d\equiv L\pmod{D}$ and $n_3-n_2=U$. Note that whether or not this holds for such a $d$ depends only on the congruence classes of the primes dividing $d$ modulo $D$. Thus, if we define $f(p_1,\ldots,p_n)$ to be 1 if it holds and 0 otherwise, we may apply Corollary \ref{averageDensityCor}.

We note that if the $p_i$ are picked randomly modulo $D$ with probability $\Theta_{u}(\log\log(X)^{-1/2})$ that $n_3-n_2=U$ and that at least one prime is not of type 2 or 3. We note furthermore that upon fixing the values of all of the $p_i$ modulo $D$ except for one of type 1 or 4, there is a unique setting of the last prime modulo $D$ so that $d\equiv L\pmod{D}$. This setting is of type 1 or 4, since if $d'$ is the product of the other primes dividing $d$, then $\legendre{\Delta\Delta'}{d'}=(-1)^{n_2+n_3}=(-1)^U=\legendre{\Delta\Delta'}{L}$, and thus $\legendre{\Delta\Delta'}{L/d'}=1.$ Therefore $\bar{f}=\Theta_{D,u}(\log\log(X)^{-1/2})$, and thus $|f|_2=\Theta_{D,u}(\log\log(X)^{-1/4}).$ Hence, applying Corollary \ref{averageDensityCor} for each $n$ with $|n-\log\log(X)|<\log\log(X)^{5/8}$ we find that letting $S(X)$ be the set of $d$ satisfying Properties (1),(2) and (3) above that
\begin{align*}
|S''(X,u)| & = \Theta_{D,u}(\log\log(X)^{-1/2})|S(X)|+O_{D,u}\left(\frac{X\log\log\log{X}}{(\log\log X)^{5/8}} \right).
\end{align*}
We note that by a slight modification of \cite{Kane} Corollary 8, we can show that the number of $d\leq X$ with $|d-\log\log(X)|>\log\log(X)^{5/8} = \exp(-\Omega(\log\log(X)^{1/4})).$ Thus, $|S(X)|=\Omega_D(X)$, and thus
$$
|S''(X,u)| = \Omega_{D,u}(X\log\log(X)^{-1/2}).
$$

We have yet to show that $|S''(X,u)-S'(X,u)|$ is small. In particular, by the above, $o(X\log\log(X)^{-1/2})$ of integers less than $X$ fail to satisfy property (5). Of the numbers satisfying the Properties (1),(2),(3) and (5), and $\omega(d)=n$, we can apply Corollary \ref{averageDensityCor} to count the number that fail to satisfy Property (6) since it is clear that this property depends only on the prime factors modulo $D$. It is also clear that $\bar{f}=e^{-\Omega(n)}$ and that $|f|_2=\sqrt{\bar{f}}$. Therefore, we have that the number of $d$ failing Property (6) is $O_D(X e^{-\Omega(n)})$. Summing over all $n$ with $|n-\log\log(X)|<\log\log(X)^{5/8}$ tells us that the number of $d$ satisfying the first five properties but not the sixth is $O_D(X\log(X)^{-\Omega_D(1)}),$ which is also much smaller than $|S''(X,u)|$. This completes the proof.
\end{proof}

Having restricted ourselves, to $S'(X,u)$, we may now consider the average moments of twists of $E$ by elements of this set. In particular, the bulk of our work will be to prove the following proposition:
\begin{prop}\label{momentProp}
Let $k$ be a non-negative integer, and $u$ be an integer. Then
$$
\lim_{X\rightarrow\infty} \frac{\sum_{d\in S'(X,u)} |\Sel_\phi(E^d)|^k}{|S'(X,u)|} = \sum_{r=0}^\infty 2^{kr}\alpha_{r,u}.
$$
\end{prop}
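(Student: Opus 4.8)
The plan is to compute the $k$-th moment of $|\Sel_\phi(E^d)|$ averaged over $S'(X,u)$ by expanding the $k$-th power combinatorially and reducing to a character-sum count that can be evaluated with Corollary \ref{averageDensityCor}. Write $|\Sel_\phi(E^d)|^k = \sum 1$ over $k$-tuples $(c_1,\dots,c_k)$ with each $c_i \in \Sel_\phi(E^d/\Q)$; equivalently, by the usual Selmer-group moment identity, $|\Sel_\phi(E^d)|^k$ counts ordered $k$-tuples of elements of $U^d \cap W^d$. The key point is that membership of a fixed element $c \in \Q^\times/(\Q^\times)^2$ in $\Sel_\phi(E^d/\Q)$ can be detected by a finite collection of local conditions which, for primes $p \mid d$ away from $2\Delta$, depend only on the residue of $p$ modulo $8\Delta$, the Legendre symbols $\legendre{p_i}{p_j}$, and the symbols $\legendre{A+2\sqrt{B}}{p_i}$ — exactly the data controlled by the distribution $\mathcal P$ (Corollary \ref{cor:onlysymbols}). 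So the moment is, up to bounded-index adjustments at the bad places, a symmetric function $f(p_1,\dots,p_n)$ of the residues of the $p_i$ modulo $D = 8\Delta$ together with their pairwise quadratic-residue data — but the latter is not a function on $((\Z/D\Z)^*)^n$, so one must first argue that the contribution of the off-diagonal Legendre symbols averages out.

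The main steps, in order: (1) Fix $n$ with $|n - \log\log X| < \log\log(X)^{5/8}$ and restrict to $d = p_1\cdots p_n$ satisfying Properties (1)--(3),(5),(6); by Lemma \ref{densityLem} and the count of $|S''(X,u)|$ from its proof, it suffices to evaluate $\sum_{d} |\Sel_\phi(E^d)|^k$ over this set and divide by $|S'(X,u)|$. (2) Expand $|\Sel_\phi(E^d)|^k$ as a sum over $k$-tuples and, using the matrix description, reinterpret the quantity $\sum_d |\Sel_\phi(E^d)|^k \mathbf 1[\text{condition } (4)]$ as the expected value, under $\mathcal P$, of $2^{k(\nu+1)}$ where $\nu$ is the nullity of $\widehat{\mathcal M}_d$, conditioned on $\dimF\Sel_\phi - \dimF\Sel_{\hatphi} = u$ and on having $\ge n/10$ primes of each type — i.e. exactly the quantity $\sum_r 2^{kr}\alpha_{r,u}^\prime(n)$, which by Theorem \ref{thm:SDLimitThm} tends to $\sum_r 2^{kr}\alpha_{r,u}$. (3) The remaining content is to pass from this $\mathcal P$-average to the genuine arithmetic average over $S'(X,u)$. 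Here one applies Corollary \ref{averageDensityCor}: the function counting pairs (residue data, $k$-tuple of Selmer elements) is symmetric and depends on the $p_i$ only through their classes mod $D$ and their mutual Legendre symbols; conditioning on each of the finitely many sign patterns of the off-diagonal symbols and on the diagonal symbols $\legendre{A+2\sqrt B}{p_i}$, what remains is a genuine function on $((\Z/D\Z)^*)^n$, and its $|\cdot|_2$-norm is $O_{E,k}(1)$ (uniformly, since each $|\Sel_\phi(E^d)| = 2^{O(\omega(d))}$ and we are on the good set where $\omega(d) \asymp \log\log X$ — the $|f|_2$ bound needs the restriction to $S'$). Summing the error term $O_D(|f|_2 X \log\log\log X / \log\log X)$ over the $O(\log\log(X)^{5/8})$ relevant values of $n$ gives an error of size $o(X \log\log(X)^{-1/2}) = o(|S''(X,u)|)$, hence $o(|S'(X,u)|)$, and the main term matches $\sum_r 2^{kr}\alpha_{r,u}$.

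The hard part will be Step (3): controlling $|f|_2$ uniformly. The naive bound $|\Sel_\phi(E^d)| \le 2^{\omega(d) + O(1)}$ gives $|f|_2 = 2^{O(\log\log X)} = \log(X)^{O(1)}$, which swamps the saving of $\log\log X / \log\log\log X$ in Corollary \ref{averageDensityCor}. The resolution, as in \cite{Kane}, is that on $S'(X,u)$ the Selmer ranks themselves have a strong (Gaussian-type) concentration, so that the second moment $\frac{1}{|G|}\sum_g |f(g)|^2$ — which is governed by $\sum_r 2^{2kr}\alpha_{r,u}^\prime(n)$, a convergent sum by Theorem \ref{thm:SDLimitThm} — is in fact $O_{E,k}(\log\log(X)^{-1/2})$ (the $\log\log(X)^{-1/2}$ coming from the probability that condition (4) holds at all, cf. the computation of $\bar f$ in the proof of Lemma \ref{densityLem}), so $|f|_2 = O_{E,k}(\log\log(X)^{-1/4})$. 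With this the error term becomes $O(X \log\log\log X / (\log\log X)^{5/4})$ per value of $n$, which after summing over $n$ is genuinely negligible against $|S'(X,u)| = \Theta(X \log\log(X)^{-1/2})$. Verifying that the second-moment bound transfers from the model $\mathcal P$ to the arithmetic setting requires a bootstrapping argument: one first proves the $k=2$ case of the proposition (or rather an upper bound for it) by a cruder estimate, then uses it to bound $|f|_2$ in the general case — this circularity, handled exactly as in \cite{Kane}, is the only delicate point.
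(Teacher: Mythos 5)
Your overall architecture matches the paper's: partition by $\omega(d)=n$, expand the $k$-th moment, identify the main term with $\sum_r 2^{kr}\alpha_{r,u}(n)$, invoke the equidistribution result (Proposition \ref{averageDesnityProp}/Corollary \ref{averageDensityCor}), and finish with Theorem \ref{thm:SDLimitThm} plus dominated convergence. But there are two genuine gaps in Step (3), which is exactly where the real work lives.

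First, your mechanism for handling the off-diagonal Legendre symbols does not work. You propose ``conditioning on each of the finitely many sign patterns of the off-diagonal symbols'' and then applying Corollary \ref{averageDensityCor} to what remains. There are $2^{\binom{n}{2}}$ such patterns, and more fundamentally the indicator of a given pattern is \emph{not} a function on $((\Z/D\Z)^*)^n$, so Corollary \ref{averageDensityCor} cannot count the primes realizing it; the whole difficulty is precisely that these symbols are not controlled by residues mod $D$. The paper's route is different in substance: one expands the local conditions at the type~1 and type~2 primes as an explicit character sum over $w_\ell \in \F_2^{n_1+n_2}$ (Equations \eqref{selmerSizeSumEqn}--\eqref{rightInnerSumEqn}), obtaining a sum of terms of the form $z\prod\chi_i(p_i)\prod\legendre{p_i}{p_j}^{e_{i,j}}\prod\lambda(p_i)$; the oscillating terms (those with some $e_{i,j}=1$) are then killed by a bilinear character-sum estimate (Lemma \ref{charsumLem}, resting on \cite{Kane} Lemma 15), and a delicate combinatorial count (Lemma \ref{mPrimeZeroLem}, via the Lagrangian-subspace claim bounding $|S_1|$ and $|S_2||S_3|$ by $2^k$) shows these terms contribute $O(X\log(X)^{-2^{-k-5}})$ in total. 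None of this analytic input appears in your proposal, and without it the moment cannot be evaluated.

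Second, your ``hard part'' --- the circularity in bounding $|f|_2$, to be resolved by bootstrapping from the $k=2$ case --- is a misdiagnosis caused by treating $|\Sel_\phi(E^d)|^k$ itself as the function fed to Corollary \ref{averageDensityCor}. In the correct setup there is no circularity: after discarding the oscillating terms, the number of surviving tuples $(y_\ell,u_\ell,w_\ell)$ is $O_{k,D}(2^{k(n_1+n_2)})$ (second part of Lemma \ref{mPrimeZeroLem}), which exactly cancels the $2^{-k(n_1+n_2)}$ normalization, so the main-term function $f$ on $((\Z/D\Z)^*)^n$ satisfies $|f|_\infty = O_{k,D}(1)$ and is supported on $C(u,n)$, giving $|f|_2 = O(n^{-1/4})$ directly. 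Neither this paper nor \cite{Kane} uses a low-moment bootstrap, and as you describe it the bootstrap is not obviously non-circular. You should replace Step (3) with the character-expansion-plus-bilinear-sum argument.
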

Note that the limit above is exactly what you would expect if an $\alpha_{r,u}$-fraction of the $d\in S'(X,u)$ had $\dimF\Sel_\phi(E^d)=r$. Before proceeding with the proof, we show how Proposition \ref{momentProp} can be used to prove Theorem \ref{MainThm}.

\begin{proof}[Proof of Theorem \ref{MainThm} assuming Proposition \ref{momentProp}]
We proceed along the same lines as \cite{Kane}, Section 5 to show that for any $u,r$
$$
\lim_{X\rightarrow\infty} \frac{\#\{d\in S'(X,u): \dimF\Sel_\phi(E^d)=r\}}{|S'(X,u)|} = \alpha_{r,u}.
$$
Let
$$
\beta_{r,u}(X) = \frac{\#\{d\in S'(X,u): \dimF\Sel_\phi(E^d)=r\}}{|S'(X,u)|}.
$$
We have that
\begin{equation}\label{betaMomentEqn}
\lim_{X\rightarrow\infty} \sum_r 2^{rk}\beta_{r,u}(X) = \sum_r 2^{rk}\alpha_{r,u} = \sum_r 2^{rk}2^{-r^2+O_u(r)} = 2^{k^2/4+O_u(k)}
\end{equation}
for all integers $k\geq 0$.

Suppose that for some sequence of integers $X_1,X_2,X_3,\ldots$ that $\beta_{r,u}(X_i)\rightarrow \beta_{r,u}$ for all $r$ and some $\beta_{r,u}\in [0,1]$. We claim that $\beta_{r,u}=\alpha_{r,u}$ for all $r,u$ and note that this would complete our proof.

First note that for any $k\geq 0$ that by Equation \eqref{betaMomentEqn} that $\sum_r 2^{(k+1)r}\beta_{r,u}(X)$ is bounded by some $C_{r,u,k}$ depending on $r,u,k$, but not $X$. This allows us to apply the dominated convergence theorem to Equation \eqref{betaMomentEqn} along the sequence $X=X_i$ to conclude that for any $k\geq 0$ that
$$
\sum_r 2^{rk}\beta_{r,u} = \sum_r 2^{rk}\alpha_{r,u}=2^{k^2/4+O_u(k)}.
$$
Applying this with $k=r$, we find that
\begin{equation}\label{betaSizeEqn}\beta_{r,u} \leq 2^{-r^2+O_u(r)}.\end{equation}

Define the analytic functions $F(z)=\sum_r z^r\alpha_{r,u}$, $G(z)=\sum_r z^r\alpha_{r,u}.$ Note that by Equation \eqref{betaSizeEqn} that both are entire functions. Furthermore, it is easy to show that for $|z|\leq 2^a$ that $|F(z)|,|G(z)| \leq 2^{a^2/4+O_u(a)}.$ By Equation \eqref{betaMomentEqn}, $F(z)=G(z)$ when $z$ is a power of $2$. However Jensen's Theorem tells us that unless $F=G$ that the average value of $\log_2(|F(z)-G(z)|)$ over $|z|=2^a$ is
$$
O(1) + \sum_{\rho\textrm{ root of }F(z)-G(z), |\rho|<2^a}\log_2(2^a/|\rho|) \geq O(1) + \sum_{k=0}^{\lfloor a\rfloor} a-k \geq a^2/2 + O(1).
$$
This contradicts our bounds on $|F|$ and $|G|$. Therefore $F=G$ identically, and by comparing coefficients $\alpha_{r,u}=\beta_{r,u}$ for all $r$. Since this holds for taking $\beta_{r,u}$ to be the limits of $\beta_{r,u}(X_i)$ along any subsequence, it implies that
$$
\lim_{X\rightarrow\infty} \frac{\#\{d\in S'(X,u): \dimF\Sel_\phi(E^d)=r\}}{|S'(X,u)|} = \alpha_{r,u}
$$
for all $r,u$.

By Lemma \ref{densityLem}, it immediately follows that
$$
\lim_{X\rightarrow\infty} \frac{\#\{d\in S''(X,u): \dimF\Sel_\phi(E^d)=r\}}{|S''(X,u)|} = \alpha_{r,u}.
$$
Writing $S''(F,X,u)$ to denote the version of $S''$ associated to a perhaps different elliptic curve, $F$, we note that
$$
S(X,u) = \bigcup_{m|2\Delta\Delta'} m S''(E^m,X/m,u).
$$
Therefore, the set of twists of the form
$$
\{ E^d : d\in S(X,u) \}
$$
can be written as a union
$$
\bigcup_{m|2\Delta\Delta'} \{ (E^m)^d : d\in S(E^m,X,u) \}.
$$
Since an $\alpha_{r,u}$-fraction of the twists in each of these sets have Selmer groups of rank $r$, the same holds for the union. This completes the proof.
\end{proof}

The rest of this section will be devoted to proving Proposition \ref{momentProp}. We begin by further partitioning $S'(X,u)$ further. In particular let $S'(X,u,n)$ be the subset of $d\in S'(X,u)$ so that $\omega(d)=n$. We note that (as long as $|n-\log\log(X)|<\log\log(X)^{5/8}$) that each $d\in S'(X,u)$ can be written in exactly $n!$ ways as $d=p_1\cdots p_n$ where $p_i$ are distinct primes, relatively prime to $2\Delta\Delta'$, so that if $n_i$ of them are of type $i$, then $n_i>n/10$ and $n_3-n_2=U_d:=u-c_L$, where $c_L$ is as given in Proposition \ref{prop:tamcalc} for $d\equiv L\pmod{2\Delta\Delta'}$. Thus we have, letting $D=8\Delta\Delta'$, that
$$
\sum_{d\in S'(X,u,n)} |\Sel_\phi(E^d)|^k = \frac{1}{n!} \sum_{\substack{p_1,\ldots,p_n \textrm{ distinct primes}\\ d=p_1\cdots p_n\leq X, (p_i,D)=1 \\ n_i \textrm{ of type }i,  n_3-n_2=U_d\\ n_i > n/10 }} |\Sel_\phi(E^d)|^k.
$$
We subdivide this sum further by conditioning on the values of each of the $p_i$ modulo $D$. In particular, we let $C(u,n)$ be the set of elements $(c_1,\ldots,c_n)\in ((\Z/D\Z)^*)^n$ so that if there are $n_i$ $c$'s of type $i$, then $n_i>n/10$ for all $i$ and $n_3-n_2=U_d$. It is easy to verify that so long as $n>10U$ that $|C(u,n)| = \Theta(\phi(D)^n n^{-1/2} ).$ In any case, we can now rewrite the above equation as
\begin{equation}\label{congSplitEqn}
\sum_{d\in S'(X,u,n)} |\\Sel_\phi(E^d)|^k = \sum_{(c_1,\ldots,c_n)\in C(u,n)} \frac{1}{n!} \sum_{\substack{p_1,\ldots,p_n \textrm{ distinct primes}\\ d=p_1\cdots p_n\leq X \\ p_i \equiv c_i \pmod{D}}}|\Sel_\phi(E^d)|^k.
\end{equation}

We need to better understand $|\Sel_\phi(E^{p_1\cdots p_n})|$ when $p_i \equiv c_i \pmod{D}$. If $d=p_1\cdots p_n$, this is the number of $x\in U'^d$ so that $x\in W^d$. For each $i$, let $t(i,1),\ldots,t(i,n_i)$ be the distinct indices so $c_{t(i,j)}$ of type $i$. We note that any such $x$ can be written uniquely as
$$
x = y \prod_{i=1}^{n_1} p_{t(1,i)}^{u_i}\prod_{i=1}^{n_3} p_{t(3,i)}^{u_{n_1+i}},
$$
where $y$ is squarefree and divides $D$ and $u=(u_1,\ldots,u_{n_1+n_3})\in \F_2^{n_1+n_3}.$ We abbreviate the above as $x=yp^u.$

In order for $x$ to be in $W^d$ it must be the case that $x\in W^d_q$ for $q|D\infty$ and for $q=p_i$ for each $i$. We note that whether or not $x\in W^d_q$ for $q|D\infty$ depends only on the congruence classes of $p_i$ modulo $D$. Thus, if $c=(c_1,\ldots,c_n)$, we let $U(c)$ denote the set of pairs $(y,u)$ as above so that $yp^u$ is in $W^d_q$ for all $q|D\infty$. It should also be noted that such $x$ are automatically in $W^d_{p_i}$ for $i$ or type 3 or 4. For $p_i$ of type 1, $x\in W^d_{p_i}$ if and only if the Hilbert symbol $(x,d(A+2\sqrt{B}))_{p_i}$ equals 1. For $p_i$ of type 2, $x\in W^d_{p_i}$ if and only if the Hilbert symbol $(x,p_i)_{p_i}$ equals 1. Therefore, we have that if $x=yp^u$ for $(y,u)\in U(c)$ then if $p_i\equiv c_i \pmod{D}$ then
$$
\sum_{w\in \F_2^{n_1+n_2}} \prod_{i=1}^{n_1} (x,d(A+2\sqrt{B}))_{p_{t(1,i)}}^{w_i}\prod_{i=1}^{n_2} (x,p_{t(2,i)})_{p_{t(2,i)}}^{w_{n_1+i}} = \begin{cases} 2^{n_1+n_2} & \textrm{if }x\in W^d \\ 0 & \textrm{else}\end{cases}.
$$
Therefore we have that if $d=p_1\cdots p_n$ with $p_i \equiv c_i \pmod{D}$,
\begin{equation*}
|\Sel_\phi(E^d) | = \frac{1}{2^{n_1+n_2}}\sum_{\substack{(y,u)\in U(c) \\ w\in \F_2^{n_1+n_2}}}\prod_{i=1}^{n_1} (yp^u,d(A+2\sqrt{B}))_{p_{t(1,i)}}^{w_i}\prod_{i=1}^{n_2} (yp^u,p_{t(2,i)})_{p_{t(2,i)}}^{w_{n_1+i}}.
\end{equation*}
Taking a $k^{th}$ power yields
\begin{equation}\label{selmerSizeSumEqn}
|\Sel_\phi(E^d) |^k = \frac{1}{2^{k(n_1+n_2)}}\sum_{\substack{(y_\ell,u_\ell)\in U(c) \\ w_\ell\in \F_2^{n_1+n_2}\\ 1\leq \ell \leq k}} \prod_{\ell=1}^k\left(\prod_{i=1}^{n_1} (y_\ell p^{u_\ell},d(A+2\sqrt{B}))_{p_{t(1,i)}}^{w_{i,\ell}}\prod_{i=1}^{n_2} (y_\ell p^{u_\ell},p_{t(2,i)})_{p_{t(2,i)}}^{w_{n_1+i,\ell}}\right).
\end{equation}
Substituting this in to Equation \eqref{congSplitEqn}, and interchanging the order of summation, we get that
\begin{align}\label{legendreProdEqn}
\sum_{d\in S'(X,u,n)} & |\Sel_\phi(E^d)|^k = \sum_{(c_1,\ldots,c_n)\in C(u,n)} \frac{1}{2^{k(n_1+n_2)}}\sum_{\substack{(y_\ell,u_\ell)\in U(c) \\ w_\ell\in \F_2^{n_1+n_2}\\ 1\leq \ell \leq k}} \notag\\
& \frac{1}{n!} \sum_{\substack{ d=p_1\cdots p_n\leq X\\ p_i \textrm{ distinct primes} \\ p_i \equiv c_i \pmod{D}}} \prod_{\ell=1}^k\left(\prod_{i=1}^{n_1} (y_\ell p^{u_\ell},d(A+2\sqrt{B}))_{p_{t(1,i)}}^{w_{i,\ell}}\prod_{i=1}^{n_2} (y_\ell p^{u_\ell},p_{t(2,i)})_{p_{t(2,i)}}^{w_{n_1+i,\ell}}\right).
\end{align}
Define $\lambda(p)$ to be the function on primes $p$ relatively prime to $D$ so that
$$
\lambda(p) = \begin{cases} \legendre{A+2\sqrt{B}}{p} & \textrm{if } \legendre{\Delta}{p}=\legendre{\Delta'}{p}=1 \\ 0 & \textrm{otherwise.} \end{cases}
$$
We note that by quadratic reciprocity and our knowledge of the $p_i$ modulo $D$, we can rewrite the inner summand as
$$
z(y_\ell,u_\ell,w_\ell)\prod_{1\leq i < j \leq n} \legendre{p_i}{p_j}^{e_{i,j}(y_\ell,u_\ell,w_\ell)} \prod_{i\in T(y_\ell,u_\ell,w_\ell)} \lambda(p_i).
$$
For some $|z(y_\ell,u_\ell,w_\ell)|=1$, $e_{i,j}(y_\ell,u_\ell,w_\ell)=e_{j,i}(y_\ell,u_\ell,w_\ell)\in \F_2$, and $T(y_\ell,u_\ell,w_\ell) \subset\{1,\ldots,n\}$ so that $i\in T(y_\ell,u_\ell,w_\ell)$ only if $c_i$ is of type 1 and $e_{i,j}$ or $e_{j,i}$ is 1 for all $j$ of type 2.

We can now remove the conditioning on the congruence classes of the $p_i$ with an appropriate character sum. Namely, we have that:
\begin{align}\label{rightInnerSumEqn}
& \sum_{d\in S'(X,u,n)}  |\Sel_\phi(E^d)|^k = \frac{1}{\phi(D)^n}\sum_{(c_1,\ldots,c_n)\in C(u,n)}\frac{1}{2^{k(n_1+n_2)}} \sum_{\substack{(y_\ell,u_\ell)\in U(c) \\ w_\ell\in \F_2^{n_1+n_2}}} \sum_{\chi_i \pmod{D}}\notag\\
& \frac{1}{n!} \sum_{\substack{ d=p_1\cdots p_n\leq X \\ p_i \textrm{ distinct primes} }}z(y_\ell,u_\ell,w_\ell)\prod_{i=1}^n\chi_i(p_i/c_i)\prod_{1\leq i < j \leq n} \legendre{p_i}{p_j}^{e_{i,j}(y_\ell,u_\ell,w_\ell)} \prod_{i\in T(y_\ell,u_\ell,w_\ell)} \lambda(p_i).
\end{align}

The inner summand is now a constant of norm 1 times a product of $\chi_i(p_i)$ where the $\chi_i$ are characters of modulus dividing $D$, times a product of Legendre symbols $\legendre{p_i}{p_j}$ for $i<j$, times a product of terms of the form $\lambda(p_i)\legendre{p_i}{p_j}$ where $j=t(2,1)$. Note that this sum is very similar to the sum consider in \cite{Kane} Proposition 9, and can be bounded by similar means. In particular, we have
\begin{lem}\label{charsumLem}
Let $\chi_i$, $z=z(y_\ell,u_\ell,w_\ell)$, $e_{i,j}=e_{i,j}(y_\ell,u_\ell,w_\ell)$ and $T=T(y_\ell,u_\ell,w_\ell)$ be as above. Let $m$ be the number of indices $1\leq i \leq n$ so that at least one of the following holds:
\begin{itemize}
\item $\chi_i\neq 1$
\item $e_{i,j}=1$ for some $j\neq i$
\end{itemize}
Then
$$
\left| \frac{1}{n!} \sum_{\substack{ d=p_1\cdots p_n\leq X \\ p_i \textrm{ distinct primes} }}z\prod_{i=1}^n\chi_i(p_i/c_i)\prod_{1\leq i < j \leq n} \legendre{p_i}{p_j}^{e_{i,j}} \prod_{i\in T} \lambda(p_i) \right| = O_{c,D}(Xc^m).
$$
\end{lem}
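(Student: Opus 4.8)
The plan is to rewrite the inner summand as a character sum of exactly the shape bounded in \cite{Kane}, Proposition~9, and then to run (a mild variant of) that argument. The only feature not already present in \cite{Kane} is the family of factors $\lambda(p_i)$ for $i\in T$, so the first task is to fold these into the character data without enlarging the set of active indices.

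First I would note that every index $i\in T$ is active --- by construction such an $i$ has $c_i$ of type~$1$ and $e_{i,j}=1$ for each index $j$ of type~$2$, of which there is at least one since every type occurs among the $c_i$ --- so the factors $\lambda(p_i)$ only modify the summand at indices already contributing to $m$. Thus the inner summand can be written as
$$
z\prod_{i=1}^{n}\eta_i(p_i)\prod_{1\le i<j\le n}\legendre{p_i}{p_j}^{e_{i,j}},
$$
where $\eta_i$ is trivial when $i$ is inactive (so $p_i$ then appears freely) and, when $i$ is active, $\eta_i$ is the Dirichlet character $\chi_i$ of modulus dividing $D$ multiplied, if $i\in T$, by the quadratic symbol $p\mapsto\lambda(p)$. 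The point is that, restricted to a fixed residue class modulo $D$, each $\eta_i$ is a quadratic symbol attached to a fixed number field of degree $O_E(1)$: from $\lambda(p)\legendre{A-2\sqrt{B}}{p}=\legendre{A^2-4B}{p}=\legendre{\Delta}{p}=1$ on type~$1$ primes one sees that $\lambda$ is governed by splitting in $\Q(\sqrt{B},\sqrt{A+2\sqrt{B}})$. Hence each $\eta_i$, just like each Legendre symbol $\legendre{\cdot}{p_j}$, enjoys square-root cancellation over primes in arithmetic progressions (by P\'olya--Vinogradov, or by Chebotarev with an error term). After this reduction the sum has precisely the form treated in \cite{Kane}.

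Next I would carry out the argument of \cite{Kane}, Proposition~9 essentially verbatim. Its mechanism is to peel off active prime variables one at a time: after fixing all the other primes, the sum over a chosen active prime $p_{i_0}$ is a character sum $\sum_{p_{i_0}\le Y}\widetilde\eta(p_{i_0})$ to the modulus $D\prod_{j:\,e_{i_0,j}=1}p_j$, which is non-principal precisely because $i_0$ is active; the hyperbola method is used to split the range of $p_{i_0}$ so that P\'olya--Vinogradov-type cancellation beats the trivial bound, producing a fixed multiplicative saving $c<1$ for that index. Removing $p_{i_0}$ leaves a configuration with at most $m-1$ active indices, and the base case $m=0$ is just the estimate $O(X)$ for the number of squarefree $d\le X$ with $\omega(d)=n$; iterating the peeling gives the bound $O_{c,D}(Xc^m)$, uniformly in the residue data $(c_1,\dots,c_n)$ and with both $c$ and the implied constant depending only on $E$.

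I expect the main obstacle to be precisely the point at which \cite{Kane}, Proposition~9 does its real work: the Legendre symbols $\legendre{p_i}{p_j}$ couple pairs of prime variables, so one cannot simply isolate a single prime and apply an off-the-shelf one-variable character-sum estimate, since the modulus $\prod_{j:\,e_{i_0,j}=1}p_j$ can be nearly as large as $X$. Keeping this modulus small relative to the length of the summed prime is exactly what the hyperbola splitting accomplishes, and keeping track of how many indices remain active after each peeling is what converts the per-step saving into the final factor $c^m$. Beyond this, the only new verification relative to \cite{Kane} is that the auxiliary symbols $\lambda(p_i)$ are harmless under these manipulations, which is immediate from their description above as quadratic symbols of a fixed number field.
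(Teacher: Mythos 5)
Your observation that every index in $T$ is automatically active is correct and matches the role played in the paper by the assumption that some $c_n$ has type 2: since $i\in T$ forces $e_{i,j}=1$ for every $j$ of type 2, and there is at least one such $j$, the $\lambda$-factors only decorate indices already counted by $m$. The paper's proof is correspondingly short: fix a type-2 index $n$, observe that every $\lambda(p_i)$ is accompanied by a factor $\legendre{p_i}{p_n}$, and then run Kane's Proposition~9 unchanged after noting that the bilinear estimate (\cite{Kane} Lemma~15) is insensitive to replacing the bounded coefficient $a(p_1)$ by $a(p_1)\lambda(p_1)$. The entire content of the modification is that $\lambda$ is bounded; nothing about its arithmetic nature is used.

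Your proposal, by contrast, tries to absorb $\lambda$ into a modified one-variable ``character'' $\eta_i=\chi_i\cdot\lambda$ and then asserts that $\eta_i$ ``enjoys square-root cancellation over primes in arithmetic progressions (by P\'olya--Vinogradov, or by Chebotarev with an error term).'' This step does not hold. The function $\lambda$ is not a Dirichlet character: its value at $p$ is dictated by the Frobenius at $p$ in the degree-8 extension $\Q(\sqrt{B},\sqrt{\Delta},\sqrt{A+2\sqrt{B}})/\Q$, which is non-abelian (conjugation by $\sqrt{B}\mapsto-\sqrt{B}$ exchanges $\sqrt{A+2\sqrt{B}}$ and $\sqrt{A-2\sqrt{B}}$), so there is no finite modulus to which $\lambda$ is periodic and P\'olya--Vinogradov simply does not apply. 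Invoking Chebotarev ``with an error term'' does not rescue this: unconditional Chebotarev error terms are of Siegel--Walfisz quality (a power of $\log$), far short of the square-root saving you need, and they carry no uniformity in the auxiliary modulus $D\prod_{j:e_{i_0,j}=1}p_j$, which as you note can be nearly as large as $X$. So the peeling/hyperbola mechanism as you describe it, which fixes all but one prime and treats the inner sum as a one-variable character sum, would genuinely break down at the $\lambda$-factors. The fix is not to understand $\lambda$ arithmetically at all, but to keep it paired with the bilinear factor $\legendre{p_i}{p_n}$ (as the paper does) so that the only estimate ever applied to $\lambda$ is ``it is bounded by $1$,'' which is what Kane's Lemma~15 requires of the coefficient sequence. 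If you reorganize your argument around the bilinear estimate rather than single-variable cancellation, your reduction becomes correct and essentially coincides with the paper's.
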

\begin{proof}
We may assume without loss of generality that $c_n$ is of type 2. Thus, we may merge terms to replace the $\lambda(p_i)$ terms with terms of the form $\lambda(p_i)\legendre{p_i}{p_n}$. The remainder of the proof is now completely analogous to the proof of Proposition 9 in \cite{Kane} after noting that \cite{Kane} Lemma 15 also implies that
$$
\left|\sum_{\substack{A\leq p_1,p_2\\ p_1p_2\leq X}}a(p_1)b(p_2)\lambda(p_1)\legendre{p_1}{p_2} \right| = O(X\log(X) A^{-1/8}).
$$
\end{proof}

For given values of $\chi_i$,$y_\ell,u_\ell,w_\ell$, let $m$ be as given in Lemma \ref{charsumLem}, and let $m'$ be the number of indices $i$ so that $e_{i,j}=1$ for some $j\neq i$. We would like to show the contribution to the sum in Equation \eqref{rightInnerSumEqn} with $m>0$ is negligible. We begin by showing that the sum over terms with $m'>0$ is negligible. In particular, we show that
\begin{lem}\label{mPrimeZeroLem}
\begin{align*}
& \frac{1}{\phi(D)^n}\sum_{(c_1,\ldots,c_n)\in C(u,n)}\frac{1}{2^{k(n_1+n_2)}} \sum_{\substack{(y_\ell,u_\ell)\in U(c) \\ w_\ell\in \F_2^{n_1+n_2} \\ m'>0}} \sum_{\chi_i \pmod{D}}\notag\\
& \left|\frac{1}{n!} \sum_{\substack{ d=p_1\cdots p_n\leq X \\ p_i \textrm{ distinct primes} }}z(y_\ell,u_\ell,w_\ell)\prod_{i=1}^n\chi_i(p_i/c_i)\prod_{1\leq i < j \leq n} \legendre{p_i}{p_j}^{e_{i,j}(y_\ell,u_\ell,w_\ell)} \prod_{i\in T(y_\ell,u_\ell,w_\ell)} \lambda(p_i)\right| \\
& \ \ \ \ \ \ \ \ \ \ = O_{D,k,U}\left( X \log(X)^{-2^{-k-1}}\right).
\end{align*}
Furthermore, for fixed $c$, the number of collections of $u_\ell,v_\ell$ so that $m'=0$ is $O_{k,u}(2^{k(n_1+n_2)})$.
\end{lem}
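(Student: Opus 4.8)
The plan is to treat the two assertions separately, establishing the counting statement for $m'=0$ first, since a quantitative refinement of it is what makes the main estimate work. For the counting statement I would begin by unwinding $e_{i,j}(y_\ell,u_\ell,w_\ell)$: each Hilbert symbol $(y_\ell p^{u_\ell},d(A+2\sqrt{B}))_{p_{t(1,i)}}$ and $(y_\ell p^{u_\ell},p_{t(2,i)})_{p_{t(2,i)}}$, after quadratic reciprocity, becomes a norm-$1$ constant times a product of Legendre symbols $\legendrea{p_a}{p_b}$, and $e_{i,j}$ is read off as an explicit $\F_2$-bilinear expression in the coordinates of the $u_\ell$ and $w_\ell$, summed over $\ell$. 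The key point is that $p^{u_\ell}$ inserts the primes $\prod_a p_a^{(u_\ell)_a}$ into one argument, so a nonzero coordinate of some $u_\ell$ necessarily produces nonzero Legendre symbols; concretely, inspecting the coefficient of a symbol $\legendrea{p_a}{p_b}$ with $p_a$ of type $3$ and $b=t(1,i)$ (together with the type-$1$ and type-$3$ cross terms and the symbols from the type-$2$ places) shows that $m'=0$ forces a rigid system of bilinear equations whose only solutions have each $u_\ell$ confined to a set of size $O_k(1)$ --- morally $u_1=\dots=u_k=0$. With the $u_\ell$ and the divisors $y_\ell\mid D$ so confined and the $w_\ell\in\F_2^{n_1+n_2}$ unconstrained, the number of admissible collections is $O_{k,u}(2^{k(n_1+n_2)})$. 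Turning this claim into a proof --- in particular ruling out accidental cancellations among the several contributions to a single $e_{i,j}$ --- is cleanest by induction on $k$: if some $u_\ell\neq0$, one shows that the corresponding $w_\ell$ and all the remaining $(u_{\ell'},w_{\ell'})$ are pinned to a bounded set, reducing to $k-1$.

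For the main estimate, for each fixed $c\in C(u,n)$, each collection with $m'>0$, and each character tuple $(\chi_i)_{i}$ modulo $D$, Lemma \ref{charsumLem} bounds the inner sum over $d=p_1\cdots p_n$ by $O_{c,D}(Xc^m)$ with $m\ge m'\ge 1$; I would take the savings parameter $c$ there to be a small power of $1/\log X$, of exponent $\delta=\delta(D,k)>2^{-k-1}$. The remaining sums are carried out in the order: over $(\chi_i)$, over $c$, over collections graded by the value $m'=t\ge 1$, and over $n$ in the admissible window. Performing the $\chi_i$-sum and the $c$-sum together (the $\tfrac{1}{\phi(D)^n}$ cancelling $|C(u,n)|\asymp\phi(D)^n(\log\log X)^{-1/2}$ down to $(\log\log X)^{-1/2}$), a collection with $m'=t$ contributes $\asymp X(\log\log X)^{-1/2}(\phi(D)c)^{t}\,(1+(\phi(D)-1)c)^{n}$, and with $c$ this small the last factor is $1+o(1)$. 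The number of collections with $m'=t$ is bounded by the quantitative form of the counting above --- $m'=t$ leaves at most $O_k(t)$ of the $\asymp n$ coordinates of the $u_\ell$ free, so there are at most $2^{k(n_1+n_2)}n^{O_k(t)}$ of them --- whence, after the $\tfrac{1}{2^{k(n_1+n_2)}}$ normalisation and summing the resulting geometric-type series in $t$ (dominated by $t=1$) and then over the $O((\log\log X)^{5/8})$ admissible $n$, the whole contribution is $O_{D,k,U}(X(\log X)^{-\delta+o(1)})=O_{D,k,U}(X(\log X)^{-2^{-k-1}})$; the admissibility of $\delta>2^{-k-1}$ is exactly the point where Lemma 15 of \cite{Kane} (applied above a threshold $A=(\log X)^{8\delta}$, the $A$-smooth $d$ being negligibly few) is invoked.

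The step I expect to be the real obstacle is this last piece of combinatorial bookkeeping. There are $\asymp 2^{k(n_1+n_3)}$ collections $(y_\ell,u_\ell,w_\ell)$, an exponentially large number, so the estimate can only go through because collections with few nonzero $e_{i,j}$ are genuinely rare; proving this rarity --- the $m'=t$ refinement of the $m'=0$ count --- requires an honest analysis of the bilinear forms $e_{i,j}$, and one must track with care how each implied constant depends on $k$, which is ultimately what forces the degradation of the exponent to $2^{-k-1}$.
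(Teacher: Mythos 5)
Your proposal identifies the right crux (controlling how many collections $(y_\ell,u_\ell,w_\ell)$ have few nonzero $e_{i,j}$), but both of your mechanisms fail. For the counting statement, it is not true that $m'=0$ pins each $u_\ell$ to an $O_k(1)$-element set: every Hilbert symbol in Equation \eqref{selmerSizeSumEqn} appears raised to a power $w_{i,\ell}$, so if all $w_\ell=0$ then every $e_{i,j}$ vanishes identically and \emph{all} $2^{k(n_1+n_3)}$ choices of the $u_\ell$ give $m'=0$. The correct structure is a trade-off, not a rigidity statement: writing $v_{1,i}\in\F_2^{2k}$ for the type-1 data $(w_{i,\cdot},u_{i,\cdot})$ and $v_{2,i},v_{3,i}\in\F_2^k$ for the type-2 $w$-data and type-3 $u$-data, one has $e_{t(1,i),t(1,j)}=\sum_\ell w_{i,\ell}u_{j,\ell}+w_{j,\ell}u_{i,\ell}$ and $e_{t(2,i),t(3,j)}=\langle v_{2,i},v_{3,j}\rangle$, so $m'=0$ forces the $v_{1,i}$ into a coset of a Lagrangian for the form $\sum_\ell x_\ell y_\ell$ (hence $\le 2^k$ values) and forces $S_2$ into the orthogonal complement of $\mathrm{span}(S_3)$ (hence $|S_2|\,|S_3|\le 2^k$); combined with $n_3-n_2=U$ this yields the $O_{k,u}(2^{k(n_1+n_2)})$ count. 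Your induction on $k$ starts from a false base claim.

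For the main estimate, you cannot take the parameter $c$ in Lemma \ref{charsumLem} to be a negative power of $\log X$: the implied constant there depends on $c$, and the underlying obstruction is real --- the contribution of $d$ having a linked prime below any threshold $A$ costs a factor comparable to $\log\log A/\log\log X$, so the saving per linked index is an absolute constant, not $(\log X)^{-\delta}$. With $c$ a genuine constant, your bookkeeping $\sum_{t\ge1}(\text{number of collections with }m'=t)\cdot c^{t}$, normalized by $2^{-k(n_1+n_2)}$, produces a quantity that \emph{grows} with $n\asymp\log\log X$ rather than decaying like a power of $\log X$. The actual source of the saving in the paper is not the character sum at all but the strictness of the two inequalities above when $m'>0$: at least one of $|S_1|<2^k$ or $|S_2||S_3|<2^k$ holds, contributing a factor $(1-2^{-k})$ for each of the $\ge n/10$ indices of the relevant type, i.e.\ $(1-2^{-k})^{n/10}=\log(X)^{-\Omega(2^{-k})}$; the $c^{m}$ bound is needed only to tame the $\binom{n}{m}\phi(D)^m 2^{km}$ combinatorial factor. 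Without this exponential-in-$n$ gain from the quadratic/bilinear form analysis, the estimate does not close.
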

\begin{proof}
To understand the size of this sum, we must better understand the number of $u_\ell,w_\ell$ with a given value of $m'$. In order to do this, we must better understand the terms $e_{i,j}$. We begin with the following definitions:

\begin{itemize}
\item For $1\leq i\leq n_1$, let $v_{1,i} \in \F_2^{2k}$ be given by $(w_{i,1},\ldots,w_{i,\ell},u_{i,1},\ldots,u_{i\ell})$.
\item For $1\leq i \leq n_2$, let $v_{2,i}\in \F_2^k$ be given by $(w_{n_1+i,1},\ldots,w_{n_1+i,\ell})$.
\item For $1\leq i \leq n_3$, let $v_{3,i}\in \F_2^k$ be given by $(u_{n_1+i,1},\ldots,u_{n_1+i,\ell})$.
\end{itemize}
It is now easy to verify that:
$$
e_{t(2,i),t(3,j)} = \langle v_{2,i}, v_{3,j} \rangle,
$$
and
$$
e_{t(1,i),t(1,j)} = \phi(t(1,i)+t(1,j))
$$
where $\phi$ is the non-degenerate quadratic form $\phi(x_1,\ldots,x_k,y_1,\ldots,y_k)=\sum_{i=1}^k x_iy_i.$

Call an index, $i$ between $1$ and $n$ \emph{active} if $e_{i,j}=1$ for any $j\neq i$. Let $S_1\subset \F_2^{2k}$ be the set of elements of the form $v_{1,i}$ for $i$ so that $t(1,i)$ is not active. Let $m_i$ be the number of active indices of type $i$. Define $S_2,S_3\subset \F_2^k$ similarly. We make the following claim:
\begin{claim}
$$
|S_1| \leq 2^k, \ \ \ \ \ |S_2||S_3| \leq 2^k
$$
Furthermore, the first inequality is strict if $m_1>0$ and the second inequality is strict if $m_2$ or $m_3$ is bigger than 0. Finally $m_4>0$ only if $m_1>0$.
\end{claim}
\begin{proof}
The first inequality follows from noting that for any $v_1,v_2\in S_1$ that $\phi(v_1+v_2)=0$, and thus that $S_1$ is contained in a translation of a Lagrangian subspace of $\phi$. If $m_1>0$ then there is some $t(1,i)$ which is active, and thus $v_{1,i}\not\in S_1$. On the other hand, by the above reasoning $S_1\cup\{v_{1,i}\}$ is contained in a translate of a Lagrangian subspace for $\phi$, implying that the inequality is strict.

The second inequality follows from the observation that $S_2$ is contained in the orthogonal compliment of the span of $S_3$. If $e_{t(2,i),t(3,j)}=1$ for some $i,j$, then $S_2$ is also orthogonal to $v_{3,j}\not\in S_3,$ from which we infer that either $S_3$ is strictly contained in it's span, or that $S_2$ is strictly contained in the orthogonal complement of $S_3$, either of which imply that $|S_2||S_3| < 2^k.$

Finally, note that $e_{t(4,i),t(4,j)}$ is always 0, and thus if $m_4>0$ then some other $m_i$ must also be positive.
\end{proof}
Note that this claim immediately implies the second part of the Lemma.

We are now ready to prove our Proposition. We write the sum over $u_\ell,w_\ell$ in a particular way. First we produce an outer sum over the values of $m_1,m_2,m_3,m$. Next we sum over possible choices of the sets $S_1,S_2,S_3$ consistent with the above claim. We note that there are only $O_k(1)$ many possibilities. Then we count the number of choices of $u_\ell,w_\ell,y_\ell$ consistent with these choices. We note that for each choice of $u_\ell,w_\ell$ there are $O_{k,D}(1)$ possible valid choices for $y_\ell$. We note that making choices of $u_\ell$ and $w_\ell$ is equivalent to picking values for the $v_{i,j}$. To do this we first decide which of the indices contribute to $m$, which can be done in at most $\binom{n}{m}$ many ways. Next, we pick the values of the $v_{i,j}$ consistently with our choices of $S_i$, which can be done in at most $|S_1|^{n_1}|S_2|^{n_2}|S_3|^{n_3}2^{km'}$ many ways. Finally, we note that  By Lemma \ref{charsumLem}, the inner sum is then $O_{k,c,D}(Xc^{m'})$. Finally, we choose the values of $\chi_i$, noting that $\chi_i=1$ unless $i$ contributes to $m$. Thus, the $\chi$
s can be picked in at most $\phi(D)^m$ ways. Thus the sum in question is at most
\begin{align*}
& \frac{1}{\phi(D)^n} \sum_{c\in C(n,u)} \frac{1}{2^{k(n_1+n_2)}} \sum_{0<m_1+m_2+m_3\leq m} O_{k,D,c}(X(2^k \phi(D) c)^{m})\binom{n}{m}|S_1|^{n_1}|S_2|^{n_2}|S_3|^{n_3}\\
\leq & \sum_{0<m_1+m_2+m_3\leq m} O_{k,D,c,U}(X(2^k \phi(D) c)^{m})\binom{n}{m}\left(\frac{|S_1|}{2^k} \right)^{n_1}\left(\frac{|S_2||S_3|}{2^k}\right)^{n_2}\\
\leq & (1-2^{-k})^{\min(n_1,n_2)}\sum_{m} O_{k,D,c,U}(X( c)^{m})\binom{n}{m}m^3\\
\leq & (1-2^{-k})^{n/10} O_{c,k,D,U}(X(1+c)^{3n})\\
\leq & (1-2^{-k})^{n/10} O_{k,D,U}(X(1+2^{-k-7})^n)\\
\leq & O_{k,d,U}(X\log(X)^{-2^{-k-5}}).
\end{align*}

This completes our proof.
\end{proof}

Now that we have shown that the contribution from terms with $m'>0$, we can deal with the sum in question.
\begin{lem}\label{fixedNLem}
For $|n-\log\log(X)|<\log\log(X)^{5/8}$,
$$
\sum_{d\in S'(X,u,n)}|\Sel_\phi(E^d)|^k = |S'(X,u,n)|\sum_r 2^{kr}\alpha_{r,u}(n) + O_{D,k,U}\left( \frac{X\log\log\log(X)}{\log\log(X)^{5/4}}\right).
$$
Furthermore,
$$
\sum_r 2^{kr}\alpha_{r,u}(n) = O_{D,k,U}(1).
$$
\end{lem}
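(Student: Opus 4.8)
\emph{Proof proposal.} The plan is to split $|\Sel_\phi(E^d)|^k$ into an ``arithmetic'' piece depending only on the residues $p_i \bmod D$ and a ``mixing'' piece built out of Legendre symbols $\legendre{p_i}{p_j}$ and $\lambda$-values, apply Proposition \ref{averageDesnityProp} to the first piece to compare the sum over $S'(X,u,n)$ with its $\mathcal{P}$-average, and invoke Lemma \ref{mPrimeZeroLem} to discard the second. Set
$$
g(p_1,\dots,p_n):=\mathbb{E}_{\mathcal{P}}\big[\,|\Sel_\phi(E^d)|^k \;\big|\; \text{the prime factors of }d\text{ are }\equiv p_1,\dots,p_n \bmod D\,\big],
$$
so $g$ is symmetric and depends only on the $p_i \bmod D$; by the expansion \eqref{selmerSizeSumEqn}--\eqref{legendreProdEqn} it is exactly the contribution of the terms with $m'=0$, for which the product of Legendre symbols (and hence of $\lambda(p_i)$'s) is empty and only the root of unity $z(y_\ell,u_\ell,w_\ell)$ survives. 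Since there are only $O_{D,k,U}(2^{k(n_1+n_2)})$ collections $(y_\ell,u_\ell,w_\ell)$ with $m'=0$ (the second half of Lemma \ref{mPrimeZeroLem}), one gets $\|g\|_\infty=O_{D,k,U}(1)$.

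Write $|\Sel_\phi(E^d)|^k=g(d)+R(d)$, where $R$ collects the $m'>0$ terms; then $\sum_{d\in S'(X,u,n)}R(d)$ is bounded in absolute value by the quantity estimated in Lemma \ref{mPrimeZeroLem}, i.e. $O_{D,k,U}\big(X\log(X)^{-2^{-k-1}}\big)$, negligible against the claimed error term. For the $g$-piece, let $h$ be $g$ times the indicator of the two congruence conditions in the definition of $S'(X,u,n)$, namely $\ord_2\T(E^d/E^{\prime d})=u$ and all four type-counts exceeding $n/10$ (these depend only on the $p_i\bmod D$ by Proposition \ref{prop:tamcalc}); then $h$ is symmetric on $((\Z/D\Z)^\times)^n$ and Proposition \ref{averageDesnityProp}, applicable since $|n-\log\log X|<\log\log(X)^{5/8}$, gives
$$
\sum_{d\in S'(X,u,n)} g(d)=\bar h\, N_n + O_D\!\left(\frac{|h|_2\,X\log\log\log X}{\log\log X}\right),
\qquad N_n := \#\{d\le X:\ d\text{ squarefree},\ \omega(d)=n,\ (d,D)=1\}.
$$

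It remains to match $\bar h\, N_n$ with $|S'(X,u,n)|\sum_r 2^{kr}\alpha_{r,u}(n)$. The crucial point is that the mixing piece has conditional mean zero against any function of the residues: conditioning $\mathcal{P}$ on $(p_i\bmod D)_i$ fixes each $z$, each $e_{i,j}$ and $T$ while leaving the $\legendre{p_i}{p_j}$ ($i<j$) and the $\lambda(p_i)$ independent and uniform in $\{\pm1\}$, and each $m'>0$ term carries at least one such Legendre symbol, so $\mathbb{E}_{\mathcal{P}}[R\mid (p_i\bmod D)_i]=0$. With $\mathbf 1[u]$ the indicator of the mod-$D$ event $\{\ord_2\T(E^d/E^{\prime d})=u\}$ this yields $\mathbb{E}_{\mathcal{P}}[\,|\Sel_\phi(E^d)|^k\,\mathbf 1[u]\,]=\mathbb{E}_{\mathcal{P}}[\,g\,\mathbf 1[u]\,]$, and dividing by $\mathbb{P}_{\mathcal{P}}(\ord_2\T=u)$ gives $\sum_r 2^{kr}\alpha_{r,u}(n)=\mathbb{E}_{\mathcal{P}}[\,g\mid\ord_2\T=u\,]$. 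Comparing with $\bar h=\mathbb{E}_{\mathcal{P}}[\,g\cdot\mathbf 1[u]\cdot\mathbf 1[\text{types}>n/10]\,]$: the two conditional expectations differ only through the type-count restriction, whose complement has $\mathcal{P}$-probability $e^{-\Omega(n)}$ by Proposition \ref{prop:lotseachtype}, so (using $\|g\|_\infty=O_{D,k,U}(1)$) $\bar h=\mathbb{P}_{\mathcal{P}}(\ord_2\T=u)\sum_r 2^{kr}\alpha_{r,u}(n)+O_{D,k,U}(e^{-\Omega(n)})$; applying Proposition \ref{averageDesnityProp} once more to the indicator of the membership conditions gives $|S'(X,u,n)|=\mathbb{P}_{\mathcal{P}}(\text{membership})\,N_n+O_D(\sqrt{\mathbb{P}_{\mathcal{P}}(\ord_2\T=u)}\,X\log\log\log X/\log\log X)$. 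Combining these, together with $N_n=\Theta(X(\log\log X)^{-1/2})$, produces the asserted identity. The extra quarter-power of $\log\log X$ in the error comes from $|h|_2=O_{D,k,U}(\sqrt{\mathbb{P}_{\mathcal{P}}(\ord_2\T=u)})$ (as $h$ is supported on that event and $\|g\|_\infty=O_{D,k,U}(1)$) together with the local central limit estimate $\mathbb{P}_{\mathcal{P}}(\ord_2\T=u)=\Theta_{D,U}(n^{-1/2})$, valid since by Proposition \ref{prop:tamcalc} $\ord_2\T-c_d=n_3-n_2$ is a sum of $n$ i.i.d.\ bounded mean-zero variables (the mean vanishes because none of $\Delta,\Delta^\prime,\Delta\Delta^\prime$ is a square).

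The ``Furthermore'' assertion is immediate from the same ingredients: $\sum_r 2^{kr}\alpha_{r,u}(n)=\mathbb{E}_{\mathcal{P}}[\,g\mid\ord_2\T=u\,]$, and since $0\le g+R=|\Sel_\phi(E^d)|^k$ with $\mathbb{E}_{\mathcal{P}}[R\mid(p_i\bmod D)_i]=0$ one has $0\le\mathbb{E}_{\mathcal{P}}[g\mid\ord_2\T=u]\le\|g\|_\infty=O_{D,k,U}(1)$. The main obstacle is the middle step: one must push the $m'=0$ versus $m'>0$ dichotomy through the character-sum expansion, check that the $m'=0$ part really is a bounded function of the residues $p_i\bmod D$, and then line up the resulting main term with the probabilistic quantity $\sum_r 2^{kr}\alpha_{r,u}(n)$ while correctly tracking the $(\log\log X)^{1/2}$-size factors introduced by conditioning on the atypical event $\ord_2\T(E^d/E^{\prime d})=u$.
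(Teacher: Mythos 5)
Your proposal is correct and follows essentially the same route as the paper: discard the $m'>0$ terms via Lemma \ref{mPrimeZeroLem}, observe that the surviving $m'=0$ part is a bounded function of the residues $p_i \bmod D$ supported on the admissible congruence classes (so $|f|_2 = O(n^{-1/4})$), apply Proposition \ref{averageDesnityProp}, and identify the resulting mean with $\sum_r 2^{kr}\alpha_{r,u}(n)$ via the same expansion computed under $\mathcal{P}$. Your explicit handling of the discrepancy between conditioning on $\ord_2\T = u$ alone versus additionally on the type-count restriction (an $e^{-\Omega(n)}$ correction) is a point the paper treats more loosely, but it is the same argument.
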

\begin{proof}
By Lemma \ref{mPrimeZeroLem}, we know that we can already safely ignore the terms with $m'>0$. Also by Lemma \ref{mPrimeZeroLem}, the number of such terms in the sum over $y_\ell,u_\ell,v_\ell$ is $O_{k,D}(2^{k(n_1+n_2)})$. Thus, up to negligible error the sum in question without the $m>0$ restriction is
\begin{equation}\label{fEquation}
\frac{1}{n!} \sum_{\substack{ d=p_1\cdots p_n\leq X \\ p_i \textrm{ distinct primes} }}f(p_1,\ldots,p_n),
\end{equation}
where $f:((\Z/D\Z)^*)^n\rightarrow \C$ is some function with $|f|_\infty\leq O_{k,D}(1)$ and $f$ supported on $C(u,n)$.
By Proposition \ref{averageDesnityProp}, this is
$$
\left(\frac{1}{\phi(D)^n} \sum_{g\in((\Z/D\Z)^*)^n} f(g) \right)\left(\frac{1}{n!} \sum_{\substack{ d=p_1\cdots p_n\leq X \\ p_i \textrm{ distinct primes} }}1 \right)+O_D\left(\frac{X\log\log\log(X)|f|_\infty}{\log\log(X)}\sqrt{\frac{|\textrm{supp}(f)|}{\phi(D)^n}} \right).
$$
The error term here is clearly seen to be
$$
O_{D,k,U}\left( \frac{X\log\log\log(X)}{\log\log(X)^{5/4}}\right).
$$

First we note that $\sum_r 2^{kr}\alpha_{r,u}(n)$ is the expectation over $p_i$ as described in Theorem \ref{thm:SDLimitThm} of the $k^{th}$ moment of the size of the Selmer group times the indicator function of the event that there are more than $n/10$ primes of each type, given that $n_3-n_2=U$. This is the expectation of the $k^{th}$ power of Selmer times the indicator function that all $n_i$ are more than $n/10$ and that $n_3-n_2=U$, divided by the probability that $n_3-n_2=U$.

The former expectation can be computed via a formula similar to Equation \eqref{legendreProdEqn} in which the inner sum and the $\frac{1}{n!}$ is replaced by an expectation. In this case, the sum over terms with $m'>0$ is exactly 0, and thus is equal to an expression analogous to that in Equation \eqref{fEquation}. Thus, it is easy to see that this sum is exactly
$$
\left(\frac{1}{\phi(D)^n} \sum_{g\in((\Z/D\Z)^*)^n} f(g) \right).
$$
Thus, we have that
$$
\sum_r 2^{kr}\alpha_{r,u}(n) = \left(\frac{1}{C'(n,u)} \sum_{g\in((\Z/D\Z)^*)^n} f(g) \right)
$$
Where $C'(n,u)$ is the set of congruence classes with $n_3-n_2=U$. This is clearly $O_{D,k,U}(1)$.

By Proposition \ref{averageDesnityProp} with $f$ the indicator function of $C(u,n)$, we find that that $|S'(X,u,n)|$ is
$$
\#\{d\leq X \textrm{ squarefree },(d,D)=1,\omega(d)=n \}\frac{|C(u,n)|}{\phi(D)^n} + O_{D,k,U}\left( \frac{X\log\log\log(X)}{\log\log(X)^{5/4}}\right).
$$
Combining these last two lines with Equation \eqref{fEquation} yields the desired result.
\end{proof}

We are almost ready to prove Proposition \ref{momentProp}. First we need one more Lemma.

\begin{lem}\label{convergeLem}
For any $k,u$
$$
\lim_{n\rightarrow\infty} \sum_r 2^{kr} \alpha_{r,u}(n) = \sum_r 2^{kr} \alpha_{r,u}.
$$
\end{lem}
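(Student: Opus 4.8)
The plan is to combine the pointwise convergence $\alpha_{r,u}(n)\to\alpha_{r,u}$ of Theorem~\ref{thm:SDLimitThm} with a uniform dominating bound and invoke the dominated convergence theorem for series. Precisely, it suffices to exhibit a function $g_u$, independent of $n$, with $\sum_{r}2^{kr}g_u(r)<\infty$ and $\alpha_{r,u}(n)\le g_u(r)$ for every $r$ and every sufficiently large $n$: then $2^{kr}\alpha_{r,u}(n)\to 2^{kr}\alpha_{r,u}$ termwise with $2^{kr}\alpha_{r,u}(n)\le 2^{kr}g_u(r)$, and the Weierstrass $M$-test gives $\sum_{r}2^{kr}\alpha_{r,u}(n)\to\sum_{r}2^{kr}\alpha_{r,u}$. (For each fixed $n$ the series is actually finite, since $\dimF\Sel_\phi(E^d)=1+\mathrm{nullity}(\widehat{\mathcal{M}}_d)\le 1+|\widehat{B_U}|\le\omega(d)+O_E(1)$; the point of $g_u$ is the \emph{uniformity} in $n$.) Thus the whole task is to produce such a $g_u$, i.e. a tail bound on $\dimF\Sel_\phi(E^d)$ that does not degrade with $n$.

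To this end I would condition on the tuple $c$ of residues of the prime factors of $d$ modulo $D=8\Delta\Delta'$. This fixes the counts $n_1,\dots,n_4$, the truth of the $u$-condition, and the exact shape of $\widehat{\mathcal{M}}_d$; since $\dimF\Sel_\phi(E^d)=1+\mathrm{nullity}(\widehat{\mathcal{M}}_d)$, averaging the conditional estimate over the $c$ satisfying the $u$-condition shows it is enough to bound $\mathbb{P}\!\left[\mathrm{nullity}(\widehat{\mathcal{M}}_d)\ge j\mid c\right]$ by $2^{-\delta j^2+O_{E,u}(j)}$ for some fixed $\delta>0$, uniformly in $c$ and in $n$; this yields $\alpha_{r,u}(n)\le 2^{-\delta(r-1)^2+O_{E,u}(r)}=:g_u(r)$ with no $n$-dependence, and $\sum_r 2^{kr}g_u(r)<\infty$. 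Now condition further on everything except the rows of $\widehat{\mathcal{M}}_d$ indexed by primes of type $3$, i.e. on the submatrix $\widehat{\mathcal{M}}_d^\prime$ of Lemma~\ref{lem:fullranksub}. As in the proof of Theorem~\ref{thm:SDLimitThm} (and tracking, exactly as in Section~\ref{sec:selcoker} and Section~\ref{sec:analytictech}, which Legendre symbols are genuinely free — the dependency running through the diagonal of $A_{2,2}$ is absorbed by the independent values $\lambda(p)$ and by the symbols $\legendre{p}{q}$ with $p$ of type $4$, which occur nowhere else), the images of the type-$3$ rows in $\Ftwo^{C}$ modulo the row space of $\widehat{\mathcal{M}}_d^\prime$ form, conditionally, a uniform random matrix $Q$ with $n_3$ rows and $n_3-u+d_1$ columns, where $d_1$ denotes the left nullity of $\widehat{\mathcal{M}}_d^\prime$; moreover $\mathrm{nullity}(\widehat{\mathcal{M}}_d)=d_1+(\text{left nullity of }Q)$. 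By the elementary subspace-counting bound behind Theorem~\ref{thm:randmatthm}, $\mathbb{P}[\text{left nullity of }Q\ge\ell]\le C_\infty 2^{-\ell(\ell+d_1-u)}$, where $C_\infty=\prod_{s\ge1}(1-2^{-s})^{-1}$.

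The remaining ingredient — which I expect to be the main obstacle — is a uniform tail bound $\mathbb{P}[d_1\ge\ell\mid c]\le C_E\,2^{-\ell^2+c_E\ell}$ with $C_E,c_E$ depending only on $E$; granting it, splitting on the value of $d_1$ and completing the square in $\ell$ turns the two displayed bounds into $\mathbb{P}[\mathrm{nullity}(\widehat{\mathcal{M}}_d)\ge j\mid c]\le C_{E,u}\,2^{-3j^2/4+O_{E,u}(j)}$, the estimate needed above (so one may take $\delta=3/4$). To prove the bound on $d_1$ I would rerun the argument of Lemma~\ref{lem:fullranksub}, but estimating the \emph{expected number of $\ell$-dimensional subspaces} of the left nullspace of $\widehat{\mathcal{M}}_d^\prime$ rather than of single left nullvectors. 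Deleting the $O_E(1)$ columns coming from $\widehat{\Psi_\text{Orig}}$ only enlarges the left nullspace, and the remaining submatrix has $n_1+O_E(1)$ rows and $n_1+n_2$ columns, hence at most $C_\infty 2^{\ell(n_1+O_E(1)-\ell)}$ subspaces of dimension $\ell$. For a given such subspace $W$ one picks a basis in reduced echelon form with respect to $\ell-O_E(1)$ pivot rows $r_p$ with $p$ of type $1$ — possible because only $O_E(1)$ of the rows are not of type $1$ — and deletes the corresponding $\le\ell$ columns $\chi_p$; just as in Lemma~\ref{lem:fullranksub}, after this deletion the entries of the pivot rows on the surviving columns are jointly independent and uniform over $\Ftwo$, the surviving $n_2$ columns of ``type $2$'' together with the relevant $\lambda$-values impose roughly $\ell$ further independent constraints on $W$, and the $O_E(1)$ extra basis vectors contribute the characters $\legendre{v}{q}$ of distinct non-square $v$; assembling these gives $\mathbb{P}\big[W\subseteq(\text{left nullspace of }\widehat{\mathcal{M}}_d^\prime)\big]\le 2^{-\ell(n_1+n_2)+O_E(\ell)}$. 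Multiplying by the number of subspaces and using $n_2\ge 0$ gives $\mathbb{P}[d_1\ge\ell\mid c]\le C_E\,2^{-\ell^2+c_E\ell}$, uniformly in $c$ and $n$, which completes the plan. The essential difficulty here is bookkeeping: checking that the description of the genuinely random entries that powers Section~\ref{sec:selcoker} and Lemma~\ref{lem:fullranksub} survives the passage from single nullvectors to $\ell$-dimensional families.
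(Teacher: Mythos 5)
Your overall framework --- pointwise convergence from Theorem \ref{thm:SDLimitThm}, an $n$-independent summable dominating function, and dominated convergence --- is exactly the paper's. But you build the dominating function the hard way, and that construction is not complete. The paper gets domination essentially for free from machinery already in place: the second part of Lemma \ref{fixedNLem}, applied with $k+1$ in place of $k$, gives $\sum_r 2^{(k+1)r}\alpha_{r,u}(n)=O_{D,k,U}(1)$ uniformly in $n$, hence $2^{kr}\alpha_{r,u}(n)=O_{D,k,U}(2^{-r})$, and that is the whole proof. Your proposal ignores this available moment bound and instead tries to establish a uniform Gaussian-type tail $\alpha_{r,u}(n)\le 2^{-\delta(r-1)^2+O_{E,u}(r)}$ directly from the random-matrix model.

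The gap is that the central estimate of your route --- the uniform bound $\mathbb{P}[d_1\ge\ell\mid c]\le C_E\,2^{-\ell^2+c_E\ell}$ on the left nullity of $\widehat{\mathcal{M}}_d^\prime$ --- is explicitly granted rather than proved, and the sketch does not yet close. Your claim $\mathbb{P}\bigl[W\subseteq(\text{left nullspace})\bigr]\le 2^{-\ell(n_1+n_2)+O_E(\ell)}$ needs about $\ell(n_1+n_2)$ independent constraints; the surviving columns supply only $\ell(n_1+n_2-\ell)+O_E(\ell)$ of them, and the missing $\approx\ell^2$ must come from the $\ell\times\ell$ minor on the deleted columns $\chi_p$. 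By quadratic reciprocity that minor is (up to signs fixed by $c$) symmetric with $\lambda$-randomized diagonal, so it carries only about $\ell(\ell+1)/2$ fresh bits, and one must check these are not already consumed by the other constraints; this is precisely the bookkeeping you defer. Without it the count gives only $2^{-\ell n_2+O_E(\ell)}$, which is vacuous for residue tuples $c$ with $n_2$ small --- and $\alpha_{r,u}(n)$, unlike $\alpha^\prime_{r,u}(n)$, conditions only on $n_3-n_2=U$, not on each $n_i>n/10$, so such $c$ cannot be discarded: their probability is $e^{-\Omega(n)}$ but is weighted by $2^{kr}$ with $r$ as large as $n+O_E(1)$, which overwhelms the exponential for large $k$. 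The dependency analysis is probably salvageable (and would yield a stronger, quantitative tail than the paper needs), but as written the proof rests on an unestablished lemma; the short route is the appeal to Lemma \ref{fixedNLem} with exponent $k+1$.
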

\begin{proof}
Applying the second part of Lemma \ref{fixedNLem}, with one higher $k$, we find that
$$
\sum_r 2^{kr+r} \alpha_{r,u}(n) = O_{k,D,U}(1)
$$
and thus
$$
2^{kr}\alpha_{r,u}(n) = O_{D,k,U}(2^{-r}).
$$
Theorem \ref{thm:SDLimitThm} tells us that $2^{kr}\alpha_{r,u}(n)$ converge to $2^{kr}\alpha_{r,u}$ pointwise. Our result now follows from the Dominated Convergence Theorem.
\end{proof}

We are now ready to prove Proposition \ref{momentProp}.
\begin{proof}
By Lemma \ref{convergeLem}, for any $\epsilon>0$ there is an $N$ so that whenever $n>N$, $|\sum_r 2^{kr} \alpha_{r,u}(n)- \sum_r 2^{kr} \alpha_{r,u}| < \epsilon$. Take $X$ so that $\log\log(X)>2N$. Then
\begin{align*}
& \sum_{d\in S'(X,u)} |\Sel_\phi(E^d)|^k \\
= & \sum_{|n-\log\log(X)|<\log\log(X)^{5/8}} \sum_{d\in S'(X,u,n)}|\Sel_\phi(E^d)|^k\\
= & \sum_{|n-\log\log(X)|<\log\log(X)^{5/8}} \left( |S'(X,u,n)|\sum_r 2^{kr}\alpha_{r,u}(n) + O_{D,k,U}\left( \frac{X\log\log\log(X)}{\log\log(X)^{5/4}}\right) \right)\\
= & \sum_{|n-\log\log(X)|<\log\log(X)^{5/8}} \left( |S'(X,u,n)|\sum_r 2^{kr}\alpha_{r,u} + O(\epsilon) \right) + O_{D,k,U}\left( \frac{X\log\log\log(X)}{\log\log(X)^{5/8}}\right)\\
= & |S'(X,u)|\sum_r 2^{kr}\alpha_{r,u} + O(\epsilon |S'(X,u)|) +O_{D,k,U}(|S'(X,u)|\log\log(X)^{-1/10}).
\end{align*}
Thus for $X$ sufficiently large,
$$
\frac{\sum_{d\in S'(X,u)} |\Sel_\phi(E^d)|^k}{|S'(X,u)|} = O(\epsilon).
$$
This completes the proof.
\end{proof}

\bibliography{citations}

\end{document}